\documentclass[11pt,reqno,tbtags]{amsart}
\allowdisplaybreaks
\numberwithin{equation}{section}
  
\usepackage[foot]{amsaddr}
\usepackage{amsthm,amssymb,amsfonts,amsmath}
\usepackage{mathrsfs}
\usepackage{mathtools}
\usepackage[numbers, sort&compress]{natbib}
\usepackage{subfigure,graphicx}
\usepackage{vmargin, enumerate}
\usepackage{setspace}
\usepackage{paralist}
\usepackage[pagebackref=true]{hyperref}
\usepackage{color}
\usepackage{stmaryrd} 
\usepackage{tikz}
\usetikzlibrary{arrows}
\usetikzlibrary{decorations.markings}

\newcommand{\R}{\mathbb{R}}

\newcommand{\N}{{\mathbb N}}

\newcommand{\dS}{\mathbb{S}}
\newcommand{\K}{\mathbb{K}}

\newcommand{\E}[1]{{\mathbb E}\left[#1\right]}

\newcommand{\p}[1]{{\mathbb P}\left(#1\right)}

\newcommand{\I}[1]{{\mathbf 1}_{[#1]}}

\usepackage[normalem]{ulem}


\newtheorem{thm}{Theorem}[section]
\newtheorem{lem}[thm]{Lemma}
\newtheorem{prop}[thm]{Proposition}
\newtheorem{cor}[thm]{Corollary}

\newtheorem{fact}[thm]{Fact}


\newcommand\cB{\mathcal B}

\newcommand\cE{\mathcal E}
\newcommand\cF{\mathcal F}
\newcommand\cG{\mathcal G}

\newcommand\cO{\mathcal O}
\newcommand\cP{\mathcal P}
\newcommand\cQ{\mathcal Q}
\newcommand\cR{{\mathcal R}}
\newcommand\cS{{\mathcal S}}
\newcommand\cT{{\mathcal T}}

 
\newcommand{\rB}{\mathrm{B}}

\newcommand{\rd}{\mathrm{d}}

\newcommand{\rG}{\mathrm{G}} 
\newcommand{\rH}{\mathrm{H}}

\newcommand{\rM}{\mathrm{M}}

\newcommand{\rP}{\mathrm{P}} 
\newcommand{\rQ}{\mathrm{Q}} 
\newcommand{\rR}{\mathrm{R}}

\newcommand{\rW}{\mathrm{W}} 
\newcommand{\rX}{\mathrm{X}}

\newcommand{\rn}{\mathrm{n}}

 
\newcommand{\bB}{\mathbf{B}}

\newcommand{\be}{\mathbf{e}}

\newcommand{\bL}{\mathbf{L}}

\newcommand{\bQ}{\mathbf{Q}} 
\newcommand{\bR}{\mathbf{R}}

\newcommand{\bV}{\mathbf{V}} 
\newcommand{\bW}{\mathbf{W}} 
\newcommand{\bX}{\mathbf{X}} 
\newcommand{\bY}{\mathbf{Y}} 
\newcommand{\bZ}{\mathbf{Z}} 
 
\newcommand{\bm}{\mathbf{m}} 
\newcommand{\bn}{\mathbf{n}} 

\DeclareMathAlphabet\mathbfcal{OMS}{cmsy}{b}{n}

\newcommand{\refT}[1]{Theorem~\ref{#1}}
\newcommand{\refC}[1]{Corollary~\ref{#1}}

\newcommand{\refL}[1]{Lemma~\ref{#1}}

\newcommand{\refS}[1]{Section~\ref{#1}}
\newcommand{\refP}[1]{Proposition~\ref{#1}}

\newcommand{\refFt}[1]{Fact~\ref{#1}}
\newcommand{\refApp}[1]{Appendix~\ref{#1}}


\providecommand{\veps}{}
\renewcommand{\veps}{\varepsilon}
\providecommand{\ora}[1]{}
\renewcommand{\ora}[1]{\overrightarrow{#1}}
\newcommand{\diam}{\mathrm{diam}}

\newcommand{\dghp}{\ensuremath{\rd_{\mathrm{GHP}}}}

\newcommand{\dlghp}{\ensuremath{\rd_{\mathrm{LGHP}}}}


\newcommand{\eqdist}{\ensuremath{\stackrel{\mathrm{d}}{=}}}
\newcommand{\convdist}{\ensuremath{\stackrel{\mathrm{d}}{\to}}}

\newcommand{\convp}{\ensuremath{\stackrel{\mathrm{p}}{\to}}}

\hypersetup{
    bookmarks=true,         
    unicode=false,          
    pdftoolbar=true,        
    pdfmenubar=true,        
    pdffitwindow=true,      
    pdftitle={My title},    
    pdfauthor={Author},     
    pdfsubject={Subject},   
    pdfnewwindow=true,      
    pdfkeywords={keywords}, 
    colorlinks=true,        
    linkcolor=blue,         
    citecolor=blue,         
    filecolor=blue,         
    urlcolor=blue           
}

\reversemarginpar
\marginparwidth 1.1in

\definecolor{clou}{rgb}{0.8,0.25,0.5125}


\begingroup
  \count255=\time
  \divide\count255 by 60
  \count1=\count255
  \multiply\count255 by -60
  \advance\count255 by \time
  \ifnum \count255 < 10 \xdef\oclock{\the\count1:0\the\count255}
  \else\xdef\oclock{\the\count1:\the\count255}\fi
\endgroup

\DeclareRobustCommand{\SkipTocEntry}[5]{}

\begin{document}

\title{The Brownian Plane with Minimal Neck Baby Universe}
\author{Yuting Wen}
\email{\href{mailto:yutingyw@gmail.com}{\tt yutingyw@gmail.com}}

\address{School of Mathematics and Statistics, University of Melbourne, Parkville, VIC, Australia}

\date{\today}
\keywords{Brownian map, Brownian plane, scaling limit, local Gromov-Hausdorff-Prokhorov topology, quadrangulation, connectivity, core, condensation, random allocation.}
\subjclass[2010]{60C05,05C10} 

\begin{abstract} 
For each $n\in\mathbb{N}$, let $\mathbf{Q}_n$ be a uniform rooted quadrangulation, endowed with an appropriate measure, of size $n$ conditioned to have $r(n)$ vertices in its root block. We prove that for a suitable function $r(n)$, after rescaling graph distance by $\left(\frac{21}{40\cdot r(n)}\right)^{1/4}$, $\mathbf{Q}_n$ converges to a random pointed non-compact metric measure space $\boldsymbol{ \mathcal{S}}$, in the local Gromov-Hausdorff-Prokhorov topology. The space $\boldsymbol{ \mathcal{S}}$ is built by identifying a uniform point of the Brownian map with the distinguished point of the Brownian plane. 
\end{abstract}

\maketitle

\section{Introduction}

The scaling limit of large random planar maps has been a focal point of probability research in the recent decade. \citet{LG} and \citet{Mi} independently established that the Brownian map is the scaling limit of several important families of planar maps; \citet{BJM} and \citet{Ab}, respectively, proved that general and bipartite planar maps with a fixed number of edges converge to the Brownian map after rescaling; \citet{ABA} and \citet{ABW}, respectively, showed that simple quadrangulations and $2$-connected quadrangulations also rescale to the same limit object. 

\citet{CLG} defined an infinite-volume version of the Brownian map, called the Brownian plane. It shares numerous similarities with the Brownian map, but additionally possesses the scaling invariance property. 

In this paper, we show a convergence towards a pointed non-compact metric measure space $\mathbfcal{S}$, obtained by identifying a random point of the Brownian map and the distinguished point of the Brownian plane; see Appendix~\ref{sec:minibus} for details. This random geometry structure provides a probabilistic model of the so-called minimal neck baby universe in $2$-dimensional quantum gravity; see \citet{JM}. Motivated by this notion, we call $\mathbfcal{S}$ the {\em Brownian plane with minimal neck baby universe} ({\em minbus} in the literature). We show that $\mathbfcal{S}$ is the limit of rescaled random quadrangulations conditioned on having an exceptionally large root block.

The result relies upon both the convergence of uniform quadrangulations towards the Brownian plane \cite{CLG}, and the convergence of uniform $2$-connected quadrangulations to the Brownian map \cite{ABW}. The main steps of the proof are as follows. First, we show that the sizes of submaps pendant to the root block have an asymptotically stable distribution. Second, we deduce asymptotics for occupancy in a random allocation model with a varying balls-to-boxes ratio. Third, we establish a bound for the number of pendant submaps of the root block, which allows us to apply the occupancy bounds to uniformly control the sizes of pendant submaps. This entails us to show that the pendant submaps act as uniformly asymptotically negligible ``decorations'' which do not affect the scaling limit.

\subsection{Terminologies}\label{sec:graphintro}

In this paper, graphs are allowed to have multiple edges. Fix a graph $G$. Write $d_G$ for the graph distance on $G$. Denote by $v(G)$ and $e(G)$, respectively, the vertex set and the edge set of $G$. The {\em size} of $G$, $|v(G)|$, is the number of vertices. For $V\subset v(G)$, write $G[V]$ for the subgraph of $G$ induced by $V$, and let $G-V =G[v(G)\backslash V]$. We say that $G$ is {\em $2$-connected} if the removal of any vertex does not disconnect $G$. A {\em rooted graph} is a pair $\rG=(G,e)$ where $e$ is an oriented edge of $G$. If $G$ is a single vertex, say $v$, then we say that $(G,v)$ is the corresponding rooted graph. 

A {\em (planar) map} is a finite planar graph properly embedded on the $2$-sphere $\dS^2$, considered up to orientation-preserving homeomorphism. A {\em submap} is an embedded subgraph. In the case that only the metric structure of a map is relevant, we may call it a graph.

A cycle $C$ in a map $M$ is {\em facial} if at least one connected component of $\mathbb{S}^2\backslash C$ contains neither vertices nor edges of $M$. Write $M^\circ$ for the map obtained from $M$ by collapsing each facial $2$-cycle into an edge.

Fix a rooted map $\rM=(M,e)$. We may define a total order $<_{\rM}$ on $v(M)$ as follows. Given that $e$ orients from $u$ to $v$, we list the vertices of $\rM$ as $u_1=u,u_2=v,u_3,...,u_{|v(M)|}$ according to their order of exploration by a breadth-first search, using the clockwise order of edges around each vertex to determine exploration priority. Furthermore, we define a total order $\prec_{\rM}$ on the set of oriented edges of $\rM$ by setting the root edge to be $\prec_\rM$-minimal and letting $e\prec_{\rM} e'$ if, given that $e$ orients from $u_i$ to $u_j$ and $e'$ from $u_{i'}$ to $u_{j'}$, either (1) $u_i$ was explored before $u_{i'}$, or (2) $i=i'$ and $e$ has higher exploration priority than $e'$. 

A {\em quadrangulation} is a connected, bipartite map where each face has degree $4$. Fix a rooted quadrangulation $\rQ=(Q,e)$. Given an edge $e' \in e(Q)$, let $B_{e'}\subset v(Q)$ be maximal subject to the constraints that $Q[B_{e'}]$ is $2$-connected and that the endpoints of $e'$ belong to $B_{e'}$; we call $Q[B_{e'}]^\circ$ a {\em block} in $Q$. In particular, for $e'=e$, we call $\left(Q[B_e],e\right)$ the {\em pre-root-block} of $\rQ$, and call $R(\rQ)\coloneqq Q[B_e]^\circ$ the {\em root block} of $\rQ$. Note that the faces of the pre-root-block consist in quadrangles and $2$-angles.

Block decomposition and block sizes have been studied for a long time; see, for example, the papers by \citet{T}, \citet{GW}, \citet{BFSS}. Here we use a variant of block decomposition, described below.\footnote{The map decomposition here is different from the block decomposition given by \citet{A}, which decomposes a map in terms of a block tree where each vertex of the tree represents a block.} Write $\cF = \cF(\rQ)$ for the set of faces enclosed by the facial $2$-cycles in the pre-root-block of $\rQ$. Assume that $\cF\neq \emptyset$, which is always satisfied for the quadrangulations we consider in the sequel. List the elements of $\cF$ as $f(1),\ldots, f(|\cF|)$, in the $\prec_\rQ$-order of their incident edges, called the {\em canonical order} on $\cF$. For $i\in \{1,\ldots,|\cF|\}$, write $P_i=P_i(\rQ)$ for the unique maximal connected non-singleton submap of $\rQ$ lying in $f(i)$ and containing no edge of the facial $2$-cycle that encloses $f(i)$, let $e_i$ be the $\prec_{\rQ}$-minimal edge in $P_i$, and let $\rP_i = (P_i,e_i)$ be the rooted submap. See Figure~\ref{con}.

\begin{figure}[htb]
\centering
\subfigure[rooted quadrangulation $\rQ$]{
\begin{tikzpicture}[scale=0.5]
\begin{scope}[decoration={
    markings,
    mark=at position 0.5 with {\arrow{>}}}
    ]
	\tikzstyle{main}=[circle,inner sep=0pt,minimum size=5pt,fill=black!70];

	\node[main] (a) at (-2,4) {};
	\node[main] (b) at (2,4){};
	\node[main] (c) at (4,4){};
	\node[main] (d) at (8,4) {};
	\node[main] (b1) at (1,4.5){};
	\node[main] (b2) at (0,4.9){};
	\node[main] (b3) at (-1,5.3){};
	\node[main] (a1) at (-1,3){};
	\node[main] (e) at (3,4) {};
	\node[main] (f) at (4,9){};
	\draw (a) to[bend left=45] (f) to[bend left=45] (d);
	\draw (b) to (e) to (d);
	\draw (a) to[bend left=40]  (1,7) to[bend left=40] (c);
	\draw (a) to[bend right=40]  (1,1) to[bend right=40] (c);
	\draw (c) to  (d);
	\draw[postaction={decorate}] (a) to[bend left=40] (0,6) to[bend left=40]   (b);
	\draw (b) to (b1) to (b2) to (b3);
	\draw (b) to[bend left=30] (b3);
	\draw (b) to[bend right=30] (b3);

	\draw (a) to[bend right=30]   (b);
	\draw (a) to (a1);
	\draw (a) to[bend right=40] (0,2) to[bend right=40]  (b);
	
	\node[main] (d1) at (7,3){};
	\draw (d) to (d1);
	\node[main] (c1) at (5,5){};
	\node[main] (c2) at (6,6){};
	\draw (c) to (c1);
	\draw (c) to[bend left=45] (6,6.5) to[bend left=45]    (d);
	\draw (c) to[bend right=45] (6,1.5) to[bend right=45]  (d);
	\draw (c) to[bend left=30] (c2);
	\draw (c) to[bend right=30]  (c2);	
\end{scope}
\end{tikzpicture}}
\subfigure[pre-root-block $\rB$ of $\rQ$]{
\begin{tikzpicture}[scale=0.5]
\begin{scope}[decoration={
    markings,
    mark=at position 0.5 with {\arrow{>}}}
    ]
	\tikzstyle{main}=[circle,inner sep=0pt,minimum size=5pt,fill=black!70];

	\node[main] (a) at (-2,4) {};
	\node[main] (b) at (2,4){};
	\node[main] (c) at (4,4){};
	\node[main] (d) at (8,4) {};
	\node[main] (e) at (3,4) {};
	
	\node (a1) at (-1.5,4.2) {};
	\node (a2) at (-1.5,3.4) {};
	\node (b1) at (1.5,4.2) {};
	\node (b2) at (1.5,3.4) {};
	\node (c1) at (4.5,4.2) {};
	\node (c2) at (4.5,3.7) {};
	\node (d1) at (7.5,4.2) {};
	\node (d2) at (7.5,3.7) {};
	\node[main] (f) at (4,9){};
	\node (f1) at (0,4.7) {$f(1)$};
	\node (f2) at (0,2.6) {$f(2)$};
	\node (f3) at (6,5.1) {$f(3)$};
	\node (f4) at (6,2.6) {$f(4)$};

	\draw (a) to[bend left=45] (f) to[bend left=45] (d);	
	\draw (b) to (e) to (d);	
	\draw (a) to[bend left=40]  (1,7) to[bend left=40] (c);
	\draw (a) to[bend right=40]  (1,1) to[bend right=40] (c);
	\draw (c) to  (d);
	\draw[postaction={decorate}] (a) to[bend left=40] (0,6) to[bend left=40]   (b);

	\draw (a) to[bend right=30]   (b);
	\draw (a) to[bend right=40] (0,2) to[bend right=40]  (b);
	\draw (c) to[bend left=45] (6,6.5) to[bend left=45]    (d);
	\draw (c) to[bend right=45] (6,1.5) to[bend right=45]  (d);
\end{scope}
\end{tikzpicture}}
\\
\subfigure[submaps $(\rP_i:1\le i\le |\cF|)$]{
\begin{tikzpicture}[scale=0.5]

\begin{scope}[decoration={
    markings,
    mark=at position 0.5 with {\arrow{>}}}
    ]
	\tikzstyle{main}=[circle,inner sep=0pt,minimum size=5pt,fill=black!70];

\node[main,label={[xshift=0cm,yshift=-1cm]$\rP_1$}] (a) at (-5,-1) {};
\node[main] (a1) at (-5,0){};
\node[main] (a2) at (-5,1){};
\node[main] (a3) at (-5,2){};
\draw[postaction={decorate}] (a) to[bend left=50] node[left] {$e_1$} (a3);
\draw (a) to (a1) to (a2) to (a3);
\draw (a) to[bend right=50] (a3);

\node[main,label={[xshift=0cm,yshift=-1cm]$\rP_2$}] (b) at (-2,-1) {};	
\node[main] (b1) at (-2,2){};
\draw[postaction={decorate}] (b) to node[left] {$e_2$} (b1);
	
\node[main,label={[xshift=0cm,yshift=-1cm]$\rP_3$}] (c) at (1,-1) {};	
\node[main] (c1) at (1,0.5){};
\node[main] (c2) at (1,2){};
\draw[postaction={decorate}] (c) to[bend left=50] node[left] {$e_3$} (c2);
\draw (c) to (c1);
\draw (c) to[bend right=50] (c2);

\node[main,label={[xshift=0cm,yshift=-1cm]$\rP_4$}] (d) at (4,-1) {};	
\node[main] (d1) at (4,2) {};
\draw[postaction={decorate}] (d) to node[left] {$e_4$} (d1);
\end{scope}
\end{tikzpicture}}	

\caption{\small}
\label{con}
\end{figure}

Next, let $L=L(\rQ)\in \{P_1,\ldots,P_{|\cF|}\}$ be the element with the largest size. If there are multiple elements of $\{P_1,\ldots,P_{|\cF|}\}$ of maximal size, we take $L$ to be the one which contains the $\prec_{\rQ}$-minimal edge. Furthermore, write 
\[
\rho_{\rQ} = v(R(\rQ)) \cap v(L(\rQ))
~\mbox{ and } ~
R^+(\rQ) = Q - \left(v\left(L(\rQ)\right)\setminus\{\rho_\rQ\}\right).
\]
In words, $\rho_\rQ$ is the unique vertex that connects the root block $R(\rQ)$ and the largest submap pendant to $R(\rQ)$, and $R^+(\rQ)$ consists of $R(\rQ)$ and all non-largest submaps pendant to it.

Given a set of graphs $\cG$, for $n\in \N$, let $\cG_n = \{G\in\cG:|v(G)|=n\}$. Write $\cQ$ and $\cR$ for the sets of connected and $2$-connected rooted quadrangulations, respectively. For all $r\in\N$ with $r\le n$, let
\[
\cQ_{n,r} = \left\{ \rQ\in\cQ_n: |v(R(\rQ))| = r\right\}.
\]

Given a finite set $\cG$, the notation $G\in_u\cG$ means that $G$ is chosen uniformly at random from $\cG$. We denote by $\convdist$ and $\convp$ convergence in distribution and in probability, respectively. Convergence and asymptotics statements are for $n\to\infty$, unless stated otherwise. When we say that a sequence $(E_n:n\in\N)$ of events occurs with high probability, we mean that $\p{E_n}\to1$. Write $\N=\{1,2,\ldots\}$ and $\N_{\ge0}=\{0,1,\ldots\}$. Finally, write GH(P) short for Gromov-Hausdorff(-Prokhorov).

\subsection{Convergence in the Local GHP Topology}\label{sec:main}
For the current subsection, a reference to \refS{sec:planeminbus} for the definition of $\mathbfcal{S}$ may be helpful, though the intuition of $\mathbfcal{S}$ presented above should be sufficient for the comprehension of the following contents. 

Since $\mathbfcal{S}$ contains the Brownian plane as a subspace, $\mathbfcal{S}$ is non-compact and the usual GH(P) topology is not suitable. We thus need the notion of local GHP topology, described below. See \cite{BBI,Mi, Mi09, LG,ABA} for details on the GH(P) topology, \citep[Section 1.2]{CLG} and \citep[Section 8.1]{BBI} for the pointed and local GH topologies, and \cite[Section 2.3]{ADH} for the local GHP topology. We call a metric space $(V,d)$ a {\em length space} if, for any $x,y\in V$, $d(x,y)$ equals the infimum of the lengths of continuous curves connecting $x$ and $y$; see \citep[Chapter 2]{BBI}. We call a metric space $(V,d)$ {\em boundedly compact} if all closed balls of finite radius are compact. A {\em pointed metric measure space} is a quadruple $(V,d,o,\nu)$, where $(V,d)$ is a metric space, $o\in V$ is called the {\em distinguished} point, and $\nu$ is a Borel measure on $(V,d)$. Given a pointed metric measure space $\bV=(V,d,o,\nu)$, for any $r\ge 0$, let $B_r = B_r(\bV)=\{w\in V: d(w,o)\le r\}$, and write $\bB_r(\bV)=\left(B_r,d,o,\nu\big\vert_{B_r}\right)$; we continue to use $d$ to denote the metric restricted to a subspace. Informally, a sequence of pointed boundedly compact measure length spaces $\left(\bV_n:n\in\N\right)$ converges to $\bV$ in the {\em local} GHP topology if for any $r\ge 0$, $\bB_r(\bV_n)$ converges to $\bB_r(\bV)$ in the {\em pointed} GHP topology.

A rooted graph $(G,e)$ is not a length space, but we may approximate it by a pointed boundedly compact length space where  the balls centred at the tail\footnote{Given an oriented edge $e$, if $e$ orients from $u$ to $v$, we call $u$ the tail of $e$ and $v$ the head.} of $e$ in $(G,e)$ and at the distinguished point in the approximating space are within GHP distance $1$. More precisely, we view each edge of $G$ as an isometric copy of the unit interval $[0,1]$. When we say that graphs converge in the pointed or local GHP topology, that is for their approximating spaces. Abusing notation, we continue to write $G$ for the resulting length space, and let $d_G$ be the intrinsic metric. 

Let $\mu_{G} = \sum_{v\in v(G)}\delta_v$ be the counting measure on $v(G)$.

\begin{thm}\label{thm2}
Let $r:\N\to\N$ be such that $r(n)> (\ln n)^{25}$ for all $n$ and $r(n) = o(n)$ as $n\to\infty$. Then for $\rQ_n\in_u \cQ_{n,r(n)}$, writing $k_n =\left(\frac{40\cdot r(n)}{21}\right)^{1/4}$, we have
\[
\left(\rQ_n,~ \frac{1}{k_n}\cdot d_{\rQ_n},~\rho_{\rQ_n},~ \frac{8}{9k_n^4}\cdot \mu_{L(\rQ_n)} + \frac{1}{|v(R^+(\rQ_n))|}\cdot \mu_{R^+(\rQ_n)}\right) \to \mathbfcal{S}
\]
in distribution for the local Gromov-Hausdorff-Prokhorov topology.
\end{thm}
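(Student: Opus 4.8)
The plan is to decompose $\mathbf{Q}_n$ into its root block $R(\mathbf{Q}_n)$, the largest pendant submap $L(\mathbf{Q}_n)$, and the collection of all remaining pendant submaps glued to $R(\mathbf{Q}_n)$, and to track the scaling limit of each piece separately before gluing. The starting point is the distributional description from the earlier part of the paper: conditionally on the block structure, $R(\mathbf{Q}_n)$ is a uniform $2$-connected rooted quadrangulation on $r(n)$ vertices, and the pendant submaps $(\mathbf{P}_i)$ are, conditionally on their sizes, independent uniform rooted quadrangulations of the relevant sizes, where the sizes themselves are distributed as an allocation of the remaining $n - r(n)$-or-so vertices into $|\cF(\mathbf{Q}_n)|$ boxes governed by the asymptotically stable (index $1/2$) law established earlier. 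First I would invoke \cite{ABW}: since $r(n)\to\infty$, the rescaled root block $r(n)^{-1/4}\cdot d_{R(\mathbf{Q}_n)}$, with its uniform measure, converges to a multiple of the Brownian map, with the precise constant $(40/21)^{1/4}$ matching $k_n$. Simultaneously, I would invoke \cite{CLG}: conditionally on $L(\mathbf{Q}_n)$ having $\ell_n$ vertices with $\ell_n/n\to 1$ (which I must show happens with high probability — this is the condensation phenomenon driven by the stable index $1/2$), the rescaled largest pendant submap, rooted at its attaching vertex $\rho_{\mathbf{Q}_n}$ and rescaled by the \emph{same} $k_n$ (note $\ell_n^{1/4}/k_n$ is bounded away from $0$ and $\infty$ only if $\ell_n \asymp r(n)$, so in fact one rescales the Brownian-plane piece by its own natural scale and then notes $k_n \to \infty$ while $\ell_n \to \infty$ much faster), converges in the \emph{local} GHP topology to the Brownian plane $\mathcal{P}$, pointed at its distinguished point; the measure normalization $\tfrac{8}{9}k_n^{-4}\mu_{L}$ is exactly the one under which uniform quadrangulations converge to the Brownian plane with its natural volume measure. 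Identifying $\rho_{\mathbf{Q}_n}$ (a ``uniform'' point of the root block, since the attaching face is essentially uniform among the $\cF$ faces) with the distinguished point of the Brownian plane is precisely the gluing that defines $\mathbfcal{S}$; see Appendix~\ref{sec:minibus}.

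Next I would control the error contributed by $R^+(\mathbf{Q}_n) \setminus R(\mathbf{Q}_n)$, i.e.\ all the small pendant submaps. The key is that in the local GHP topology only the ball $B_\varepsilon(\mathbfcal{S})$ of a fixed radius $\varepsilon$ matters, and on the graph side $B_{\varepsilon k_n}(\mathbf{Q}_n)$ around $\rho_{\mathbf{Q}_n}$ meets at most $O(\varepsilon k_n) = O(r(n)^{1/4})$ vertices of the root block, hence — by the bound on the number of pendant submaps of the root block established as the third main step in the introduction — is incident to at most $\mathrm{poly}(r(n))$ pendant submaps, each of which, by the occupancy/allocation estimates with varying balls-to-boxes ratio (the second main step), has size at most $\mathrm{poly}(\ln n) = o(k_n^4)$ uniformly with high probability. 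Therefore, after rescaling by $1/k_n$, every small pendant submap has diameter $o(1)$, so deleting all of them perturbs $d_{\mathbf{Q}_n}$ restricted to $B_{\varepsilon k_n}$ by $o(1)$ in the Gromov-Hausdorff sense, and their total rescaled counting measure is $\le |v(R^+)|^{-1}\cdot(\text{number of such vertices}) \to 0$; the hypothesis $r(n) > (\ln n)^{25}$ is precisely what makes $\mathrm{poly}(\ln n)/k_n^4 \to 0$ with room to spare, and guarantees enough concentration. This shows $\mathbf{Q}_n$ and $R^+(\mathbf{Q}_n)$ have the same local-GHP scaling limit, and $R^+(\mathbf{Q}_n)$ in turn is GHP-close to $R(\mathbf{Q}_n)$ with its ``bonus'' small decorations, which are asymptotically negligible as just argued.

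To assemble the pieces I would use the standard correspondence/coupling technique: build explicit correspondences between $B_{\varepsilon k_n}(\mathbf{Q}_n)$ and $B_\varepsilon(\mathbfcal{S})$ by concatenating the correspondence realizing the root-block-to-Brownian-map convergence (restricted near $\rho$) with the one realizing the pendant-submap-to-Brownian-plane convergence (restricted near the distinguished point), checking that the distortion is the sum of the two individual distortions plus the $o(1)$ error from the small decorations and from the at-most-unit discrepancy between a graph and its length-space approximation; the measures match because the two normalizations $\tfrac{8}{9}k_n^{-4}\mu_L$ and $|v(R^+)|^{-1}\mu_{R^+}$ are exactly tuned to the Brownian-plane volume and the Brownian-map mass-one measure respectively, and their restrictions to a ball converge jointly by Slutsky-type arguments once the sizes are controlled. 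A Skorokhod-representation step lets me pass from the marginal convergences to joint almost-sure convergence on a common space. The main obstacle I anticipate is the third step: obtaining a \emph{uniform} (over all pendant submaps simultaneously meeting a macroscopic ball) bound showing that no small pendant submap is large enough to survive rescaling — this requires combining a good tail bound on the number of pendant submaps incident to a given portion of the root block with the occupancy estimates in the stable-index-$1/2$ allocation regime, and it is here that the precise lower bound $r(n) > (\ln n)^{25}$ is consumed; the condensation statement ($|v(L(\mathbf{Q}_n))| = n - o(n)$ whp) is a prerequisite but should follow more directly from the stable-law input.
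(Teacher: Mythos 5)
Your overall strategy coincides with the paper's (decompose into root block, largest pendant submap, and small decorations; apply \cite{ABW} and \cite{CLG} to the first two; show the decorations are negligible; glue via Skorokhod representation and a lemma that local GHP convergence passes through one-point gluings). However, there is a genuine gap in your treatment of the \emph{measure} carried by the small decorations. You argue that the total rescaled mass of $R^+(\rQ_n)\setminus R(\rQ_n)$ under $\frac{1}{|v(R^+(\rQ_n))|}\mu_{R^+(\rQ_n)}$ tends to $0$. This is false: typically $|\cF(\rQ_n)|=\Theta(r(n))$, each pendant submap contributes at least one extra vertex, so $|v(R^+(\rQ_n))|-r(n)=\sum_{i\ge2}Y_{(i)}(\rQ_n)$ is of order $r(n)$ and the decorations carry a \emph{non-vanishing fraction} of the total mass of $R^+(\rQ_n)$. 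One therefore cannot simply delete them measure-wise; one must show their mass does not concentrate on a small region of the root block. The paper does this by an exchangeability/Hoeffding argument (\refL{lem:prok}) comparing $\frac{1}{|v(R^+_n)|}\mu^{\bn}_{\rR_n}$ to $\frac{1}{r(n)}\mu_{R_n}$ in Prokhorov distance, which in turn requires controlling $\max_v|C_v|$ via a maximum-degree bound for uniform $2$-connected quadrangulations (\refL{lem:vtxdegree}) and a bound on how many times any single edge of the root block is subdivided (\refL{lem:split}). None of this appears in your proposal, and your route to the conclusion rests on the incorrect premise above.

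Two of your supporting claims are also off, though they are less structural. First, the ball $B_{\veps k_n}(\rho_{\rQ_n})$ meets $\Theta(k_n^4)=\Theta(r(n))$ vertices of the root block (indeed essentially all of $R(\rQ_n)$ for large $\veps$, since $\diam(R(\rQ_n))=\Theta(r(n)^{1/4})$), not $O(k_n)$; this is what feeds the erroneous mass estimate. Second, the non-largest pendant submaps do not have $\mathrm{poly}(\ln n)$ size: the allocation law has tail $k^{-5/2}$ (domain of attraction of a $3/2$-stable law, not index $1/2$), and the correct uniform bound is a power of $r(n)$ (the paper proves $Y_{(2)}(\rQ_n)\le r(n)^{5/6}$ whp), which suffices because all one needs is size $o(r(n))$, hence diameter $o(r(n)^{1/4})=o(k_n)$ by the quartic size--diameter relation. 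Relatedly, the hypothesis $r(n)>(\ln n)^{25}$ is consumed not in making $\mathrm{poly}(\ln n)/k_n^4\to0$ but in beating the union bound over up to $n$ pendant submaps in the diameter tail estimate ($\ln n - c_2xr(n)^{1/24}\to-\infty$). The metric part of your negligibility argument survives these corrections, but the measure part needs to be redone along the lines indicated above.
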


The assumption $r(n)>(\ln n)^{25}$ is necessary due to (\ref{eq:assum}) but it is not optimal, while the assumption $r(n)=o(n)$ allows us to obtain a non-compact metric space in the limit. 

By assigning $0$ mass to components of $R^+(\rQ_n) - v(R(\rQ_n))$, we obtain a similar result.

\begin{thm}\label{thm1}
Let $r:\N\to\N$ be such that $r(n)> (\ln n)^{25}$ for all $n$ and $r(n) = o(n)$ as $n\to\infty$. Then for $\rQ_n\in_u \cQ_{n,r(n)}$, writing $k_n =\left(\frac{40\cdot r(n)}{21}\right)^{1/4}$, we have
\[
\left(\rQ_n,~ \frac{1}{k_n}\cdot d_{\rQ_n},~\rho_{\rQ_n},~ \frac{8}{9k_n^4}\cdot \mu_{L(\rQ_n)} + \frac{1}{r(n)}\cdot \mu_{R(\rQ_n)}\right) \to \mathbfcal{S}
\]
in distribution for the local Gromov-Hausdorff-Prokhorov topology.
\end{thm}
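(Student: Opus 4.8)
The plan is to run the proof of \refT{thm2} essentially unchanged, the only difference being that the uniform measure on $R^+(\rQ_n)$ is replaced by the uniform measure on the root block $R(\rQ_n)$ — equivalently, mass $0$ is assigned to every component of $R^+(\rQ_n)-v(R(\rQ_n))$. The pointed metric spaces $\bigl(\rQ_n,\,k_n^{-1}d_{\rQ_n},\,\rho_{\rQ_n}\bigr)$ are identical in \refT{thm1} and \refT{thm2}, and all the metric estimates underlying \refT{thm2} are measure-agnostic; in particular, the convergence of this pointed space to the metric space underlying $\mathbfcal{S}$, and the convergence $\tfrac{8}{9k_n^4}\mu_{L(\rQ_n)}\to$ (the volume measure of the Brownian-plane component of $\mathbfcal{S}$), may be quoted verbatim. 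It therefore suffices to show that, jointly with these, $\tfrac{1}{r(n)}\mu_{R(\rQ_n)}$ converges to the mass measure of the Brownian-map component of $\mathbfcal{S}$, and then to reassemble the limit as in \refT{thm2}.

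First I would note that $R(\rQ_n)$ is a uniform element of $\cR_{r(n)}$: every $2$-connected quadrangulation with $r$ vertices has exactly $2r-4$ edges, and the block decomposition of \refS{sec:graphintro} identifies a rooted quadrangulation with $n$ vertices and root block of size $r$ with a pair consisting of a core $C\in\cR_r$ and an assignment, to the $2r-4$ edges of $C$, of (possibly trivial) rooted quadrangulations of a $2$-gon totalling $n-r$ vertices; the number of such assignments depends only on $r$ and $n$, so each $C\in\cR_{r(n)}$ occurs as $R(\rQ_n)$ with the same probability. Invoking the scaling limit of uniform $2$-connected quadrangulations \cite{ABW} — whose normalising constants are exactly those built into $k_n$ and into the factor $\tfrac{8}{9k_n^4}$ — yields convergence of $\bigl(R(\rQ_n),\,k_n^{-1}d_{R(\rQ_n)},\,\rho_{\rQ_n},\,\tfrac{1}{r(n)}\mu_{R(\rQ_n)}\bigr)$ to the Brownian map with its mass measure, pointed at $\rho_{\rQ_n}$, provided $\rho_{\rQ_n}$ is asymptotically a uniform point of $R(\rQ_n)$. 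The latter holds because $\rho_{\rQ_n}$ is an endpoint of the $R(\rQ_n)$-edge carrying the largest pendant submap, and the occupancy bounds from the proof of \refT{thm2} force all but one pendant submap to have size $o(r(n))$ with high probability, so the size-biasing of this edge towards larger pendant structures (and the tie-breaking rule) wash out.

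The remaining work, the joint convergence and the gluing, reuses three inputs of \refT{thm2} without modification: (i) $\rho_{\rQ_n}$ converges, jointly, to a uniform point of the Brownian map on the $R$-side and to the distinguished point of the Brownian plane on the $L$-side; (ii) conditionally on $|v(R(\rQ_n))|$, on $|v(L(\rQ_n))|$ and on the combinatorial position of the condensate, $R(\rQ_n)$ and $L(\rQ_n)$ are independent uniform objects, so their rescaled limits are independent, which produces the otherwise-independent glued space $\mathbfcal{S}$; and (iii) with high probability every non-largest pendant submap has diameter $o(k_n)$. Point (iii) is precisely what legitimises the modified measure: since in \refT{thm1} these submaps carry no mass, being metrically negligible they affect neither the metric nor the measure of the rescaled limit, and — unlike in \refT{thm2} — no argument that their mass smears onto the Brownian map is required. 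Combining the $R$-side and $L$-side convergences with (i)--(iii) and restricting to balls of arbitrary radius around $\rho_{\rQ_n}$ gives convergence to the corresponding region of $\mathbfcal{S}$, which is the claim. The hypotheses $r(n)>(\ln n)^{25}$ and $r(n)=o(n)$ serve exactly as in \refT{thm2}: the former drives the occupancy and diameter bounds in (i) and (iii), the latter ensures $|v(L(\rQ_n))|$ grows fast enough for the $L$-side limit to be the non-compact Brownian plane.

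The main obstacle is not the single new ingredient — the exact uniformity of $R(\rQ_n)$ reduces its convergence to a direct citation of \cite{ABW} — but isolating, inside the proof of \refT{thm2}, exactly which statements are independent of the chosen measure (the metric estimates, the negligibility of the pendant decorations, the conditional law of core and condensate, the asymptotic uniformity of $\rho_{\rQ_n}$), so that they may be quoted here; once that separation is made, the gluing is verbatim that of \refT{thm2}.
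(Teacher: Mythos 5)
Your proposal is correct and matches the paper's (omitted) argument: Theorem~\ref{thm1} is obtained by substituting Proposition~\ref{prop:uan1} for Proposition~\ref{prop:uan} — i.e.\ using only the diameter bound of Corollary~\ref{diam} since the pendant submaps carry no mass — and then reusing the joint convergence and gluing steps of Lemmas~\ref{localgh0} and~\ref{localgh} verbatim. Your extra discussion of the asymptotic uniformity of $\rho_{\rQ_n}$ is addressed in the paper simply by citing the pointed convergence of \cite{ABW}, so no new argument is needed there.
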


The proof of Theorem~\ref{thm1} is similar to but simpler than that of Theorem~\ref{thm2}, so we only provide a proof outline for Theorem~\ref{thm2}.

In the remainder of the paper, let $r:\N\to\N$ be such that $r(n)>(\ln n)^{25}$ for all $n$ and $r(n)=o(n)$. For all $n\in\N$, let $k_n =\left(\frac{40\cdot r(n)}{21}\right)^{1/4}$.

\subsection{Proof Outline for \refT{thm2}}\label{sec:outline}

Write $\mathbfcal{P}$ for the pointed measured Brownian plane and $\bm_\infty$ for the pointed measured Brownian map, both endowed with uniform measures; see the appendix for precise definitions.

In this subsection, for all $n\in\N$, let $\rQ_n\in_u\cQ_{n,r(n)}$. It is easily seen that $R_n \coloneqq R(\rQ_n)$ is a uniform $2$-connected quadrangulation with $r(n)$ vertices. Then by \citep[Theorem 1.1]{ABW}, $\widehat{\bR}_n \coloneqq \left(R_n,~\frac{1}{k_n}\cdot d_{R_n},~\rho_{\rQ_n},~\frac{1}{r(n)}\cdot \mu_{R_n}\right) \convdist \bm_\infty$ for the pointed GHP topology.

Write $R^+_n=R^+_n(\rQ_n)$. To establish an analogous convergence result for $R^+_n$, we show that components of $R^+_n - v(R_n)$ are uniformly asymptotically negligible. This is accomplished in two main steps. First, \refP{allocationprop2} and \refC{Rcor2} prove that each component is small in size. Then, using the quartic relation between size and diameter, \refC{diam} shows that the diameters of these components have order $o(r(n)^{1/4})$ with high probability, proving their negligibility in terms of metric structure. Secondly, Lemmas~\ref{lem:vtxdegree} and~\ref{lem:split} show that these components do not concentrate on a small region, proving their negligibility in terms of measure structure. Then it follows that
\begin{equation}\label{conv1}
\widehat{\bR}_n^+ \coloneqq \left(R_n^+,~\frac{1}{k_n}\cdot d_{R_n^+},~\rho_{\rQ_n},~\frac{1}{|v(R_n^+)|}\cdot \mu_{R_n^+}\right) \convdist \bm_\infty
\end{equation}
for the pointed GHP topology, as shown in \refP{prop:uan}.

On the other hand, we prove that $L_n \coloneqq L(\rQ_n)$ has $\Omega\left(\frac{n}{(\ln n)^2}\right)$ vertices with high probability, shown in \refP{allocationprop} and \refC{Rcor}. Note that conditioned on its size, $L_n$ is a uniform quadrangulation. Then it follows from \citep[Theorem 2]{CLG} that
\begin{equation}\label{conv2}
\widehat{\bL}_n \coloneqq \left(L_n,~\frac{1}{k_n}\cdot d_{L_n},~\rho_{\rQ_n},~\frac{8}{9k_n^4}\cdot \mu_{L_n}\right) \convdist \mathbfcal{P}
\end{equation}
for the local GHP topology. The convergence in \cite{CLG} is stated for the local GH topology, but a slight extension in fact yields the above formulation, as shown in \refP{planeghp}. 

By (\ref{conv1}) and (\ref{conv2}) we easily obtain the joint convergence
\begin{equation}
\label{conv12}
\left(\widehat{\bR}_n^+ ,\widehat{\bL}_n \right) \convdist \left(\bm_\infty,\mathbfcal{P} \right)
\end{equation}
for the local GHP topology, where $\bm_\infty$ and $\mathbfcal{P}$ are independent, as explained in \refL{localgh0}. Finally, we view $\rQ_n$ as a space obtained by gluing $R_n^+$ to $L_n$ at the point $\rho_{\rQ_n}$, and analogously view $\mathbfcal{S}$ as $\bm_\infty$ glued to $\mathbfcal{P}$. \refL{localgh} shows that local GHP convergence is preserved by such a gluing operation. \refT{thm2} then follows easily from (\ref{conv12}).

\subsection{Organization of the Paper}

We associate quadrangulations to a balls-in-boxes model and describe an asymptotically stable distribution for sizes of pendant submaps in \refS{sec:stable}, followed by deriving occupancy in a random allocation model in \refS{sec:allocation}. In \refS{app:number}, we deduce a bound for the number of pendant submaps of the root block, with size and diameter bounds for the submaps derived in \refS{sec:struc}. Then we establish (\ref{conv1}) in \refS{sec:uan}, and complete the proofs of the theorems in \refS{sec:pfthm1}. In \refApp{sec:minibus}, we present definitions for the Brownian plane, with and without minbus (i.e. the Brownian map attachment). Finally, in \refApp{sec:planeghp}, we extend the convergence result of \cite{CLG} to the local GHP topology, following an overview of the scaled Brownian map.

\subsection{Acknowledgement}

I thank Louigi Addario-Berry for suggesting this problem and for advice on improving the proofs. I also thank the referees for careful proofreading and for helpful, minor comments.

\section{Asymptotically Stable Distribution}\label{sec:stable}

Fix $n,r,N\in\N$ with $1 \le N \le n-r$, and let
\begin{equation}\label{eq:Qnrk}
\cQ_{n,r,N} = \left\{\rQ\in\cQ_{n,r}: |\cF(\rQ)| = N \right\},
\end{equation}
where $\cF(\rQ)$ denotes the set of faces enclosed by the facial $2$-cycles in the pre-root-block of $\rQ$. Now, fix $\rQ\in\cQ_{n,r,N}$. Recall that $\left(\rP_i(\rQ):1\le i\le N\right)$ lists the non-singleton submaps lying in $\cF(\rQ)$. When $\rQ$ is random, we are able to recast the behaviour of $\left(|v(\rP_i(\rQ))|:1\le i\le N\right)$ as a balls-in-boxes allocation problem with unlabelled balls (corresponding to the vertices of the submaps) and labelled boxes (corresponding to the faces enclosed by the facial $2$-cycles). With this allocation viewpoint, this section shows that the sizes of these submaps follow an asymptotically stable distribution.

Next, for $m,k\in\N$, write $\cB_{m,k} = \left\{(y_1,\cdots,y_k)\in \N^k: \sum_{i=1}^k y_i = m\right\}$
 for the set of possible allocations of $m$ unlabelled balls in $k$ labelled boxes. For $1\le i\le N$, let
\begin{equation}\label{Ydef}
 Y_i(\rQ)= |v(\rP_i(\rQ))|-1.
\end{equation}
Since $|v(\rP_i(\rQ))|\ge2$, we have $Y_i(\rQ)\ge1$. It is easily seen that $\sum_{i=1}^N Y_{i}(\rQ) = n-r$ and $(Y_i(\rQ):1\le i\le N)\in \cB_{n-r,N}$. We call $(Y_i(\rQ):1\le i\le N)$ the {\em allocation associated with $\rQ$}. Conversely, for a given allocation $(y_i:1\le i\le N)\in \cB_{n-r,N}$, there are multiple rooted quadrangulations $\rQ'\in\cQ_{n,r,N}$ such that $|v(\rP_i(\rQ'))|-1 = y_i$ for each $	1\le i\le N$. We call these the {\em quadrangulations associated with $(y_i:1\le i\le N)\in \cB_{n-r,N}$}. The number of such quadrangulations $\rQ'$ is given in \refL{quadcount}.

Furthermore, we use that there exists $\psi:\N\to\R$ with $\psi(x)\to0$ as $x\to\infty$ and with $\psi(x)>-1$ for all $x\in\N$ such that for all integer $k\ge2$,
\begin{equation}\label{eq:quadsize}
|\cQ_k| = \frac{2}{\sqrt{\pi}} \frac{12^{k-2} (1+\psi(k-1))}{(k-1)^{5/2}};
\end{equation}
 see \cite[Proposition 3.1]{ABW} or \cite{BFSS}. (There are $\frac{2}{\sqrt{\pi}} \frac{12^k (1+o(1))}{k^{5/2}}$ rooted maps with $k$ {\em edges}, so by Tutte's bijection and Euler's formula there are $\frac{2}{\sqrt{\pi}} \frac{12^k (1+o(1))}{k^{5/2}}$ rooted quadrangulations with $k$ {\em faces}, or with $k+2$ {\em vertices}, as $k\to\infty$.) Now, for all $k\in\N$, let
\begin{equation}\label{pdef}
w(k) =\frac{1+\psi(k)}{k^{5/2}},~\overline{w} = \sum_{\ell=1}^\infty w(\ell),~p(k) = \frac{w(k)}{\overline{w}}.
\end{equation}
Since $(p(k): k\in\N)$ is a probability distribution, we may associate it with a random variable $\xi$ such that $\p{\xi = k} = p(k)$ for all $k\in\N$. Let $\xi_{1},\xi_2,\ldots$ be independent copies of $\xi$. For $k\in\N$, write $S_k = \sum_{i=1}^k\xi_i$. 

\begin{prop}\label{cond}
Fix $n,r,N\in\N$ with $1\le N\le n-r$ and $r>1$, and let $(y_i: 1\le i\le N)\in \cB_{n-r,N}$. Then for $\rQ_n\in_u\cQ_{n,r,N}$, 
\[
\p{Y_i(\rQ_n) = y_i, 1\le i\le N}
 = \p{\xi_i = y_i, 1\le i\le N~\big\vert~S_{N}  = n-r}.
 \]
\end{prop}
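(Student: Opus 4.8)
The plan is to compute both sides explicitly as ratios of counting quantities and check they agree. For the right-hand side, since the $\xi_i$ are i.i.d.\ with $\p{\xi_i=k}=p(k)=w(k)/\overline w$, we have for any $(y_i)\in\cB_{n-r,N}$ that $\p{\xi_i=y_i,\ 1\le i\le N} = \overline w^{-N}\prod_{i=1}^N w(y_i)$, and hence
\[
\p{\xi_i=y_i,\ 1\le i\le N \mid S_N = n-r}
= \frac{\prod_{i=1}^N w(y_i)}{\sum_{(z_i)\in\cB_{n-r,N}}\prod_{i=1}^N w(z_i)},
\]
the normalising constant $\overline w^{-N}$ cancelling. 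So the statement reduces to showing that the number of quadrangulations in $\cQ_{n,r,N}$ with $Y_i(\rQ)=y_i$ for all $i$ is proportional (with a constant depending only on $n,r,N$, not on the $y_i$) to $\prod_{i=1}^N w(y_i)$.

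The main step is therefore a counting lemma — this is the \refL{quadcount} referenced just before the statement — expressing the number of rooted quadrangulations associated with a given allocation $(y_i:1\le i\le N)$. The key structural fact is that a quadrangulation $\rQ\in\cQ_{n,r,N}$ is determined by: (i) its pre-root-block data, i.e.\ the $2$-connected root block together with the combinatorial data specifying which $N$ faces carry facial $2$-cycles and in what canonical order — this depends on $n,r,N$ but not on the individual $y_i$; and (ii) for each $i$, a choice of rooted submap $\rP_i$ to insert into face $f(i)$, where $\rP_i$ ranges over rooted quadrangulation-like pieces of size $y_i+1$, subject to the rooting convention that $e_i$ is the $\prec_\rQ$-minimal edge of $P_i$. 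The number of admissible choices for the $i$-th slot should be exactly $|\cQ_{y_i+1}|$ up to an edge-normalisation, or more precisely proportional to $w(y_i)$: by (\ref{eq:quadsize}) and (\ref{pdef}), $|\cQ_{y_i+1}| = \frac{2}{\sqrt\pi}12^{y_i-1} w(y_i)$, so the $12^{y_i-1}$ and constant factors, when multiplied over $i$, contribute $(\text{const})\cdot 12^{\sum(y_i-1)} = (\text{const})\cdot 12^{\,n-r-N}$, which depends only on $n,r,N$. Multiplying over $i\in\{1,\dots,N\}$ and over the pre-root-block choices then yields $|\{\rQ\in\cQ_{n,r,N}: Y_i(\rQ)=y_i\ \forall i\}| = C(n,r,N)\prod_{i=1}^N w(y_i)$, and dividing by the total $|\cQ_{n,r,N}|=C(n,r,N)\sum_{(z_i)}\prod w(z_i)$ gives the claim.

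The hard part will be making the bijective decomposition in (i)–(ii) precise: one must check that gluing a rooted piece of size $y_i+1$ into face $f(i)$ — collapsing the facial $2$-cycle to an edge and identifying endpoints appropriately — gives a well-defined bijection onto $\cQ_{n,r,N}$ restricted to the prescribed sizes, that the induced rooting on $P_i$ (taking $e_i$ to be the $\prec_\rQ$-minimal edge) matches the natural rooting used to count $|\cQ_{y_i+1}|$, and that the canonical order on $\cF$ is respected so that no overcounting or undercounting occurs. The condition $r>1$ presumably ensures the root block is a genuine $2$-connected quadrangulation (not a degenerate single edge) so that the face structure and the set $\cF$ are well-behaved. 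Once the bijection is set up, the algebra is exactly the cancellation of $\overline w^{-N}$ against $\overline w^{N}$ and of the $n,r,N$-dependent constants against each other, as sketched above.
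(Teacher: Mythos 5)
Your proposal is correct and follows essentially the same route as the paper: both sides are computed as ratios proportional to $\prod_{i=1}^N w(y_i)$, with the left-hand side reduced via the counting formula of \refL{quadcount} (whose $y$-dependence enters only through $\prod_i|\cQ_{y_i+1}|$, the factors $12^{y_i-1}$ contributing the fixed constant $12^{n-r-N}$) and all $n,r,N$-dependent constants cancelling in the normalisation. The paper likewise isolates the bijective decomposition you flag as ``the hard part'' into the separate proof of \refL{quadcount}, so your reduction matches the paper's proof of the proposition itself.
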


For any $y=(y_i: 1\le i\le N)\in \cB_{n-r,N}$, let $\Lambda_{n,r,N}^y$ be the number of quadrangulations $\rQ$ in $\cQ_{n,r,N}$ associated with $y$ such that $Y_i(\rQ)=y_i$ for $1\le i\le N$. We start by deriving a formula for $\Lambda_{n,r,N}^y$ in \refL{quadcount} before proving \refP{cond}. For cleanness of presentation, we do not consider the case $r=1$ where the root block is an edge. Recall that $\cR_r$ is the set of rooted $2$-connected quadrangulations of $r$ vertices. 
 
 \begin{lem}\label{quadcount}
Fix $n,r,N\in\N$ with $1\le N\le n-r$ and $r>1$. Then for $y=(y_i: 1\le i\le N)\in \cB_{n-r,N}$, we have $\Lambda_{n,r,N}^y=\left\vert \cR_{r}\right\vert {N+2r-4\choose N} 2^N \prod_{i=1}^{N} \left\vert \cQ_{y_i+1}\right\vert$.
 \end{lem}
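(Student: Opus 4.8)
The plan is to reconstruct an arbitrary $\rQ \in \cQ_{n,r,N}$ associated with $y$ from a sequence of independent choices, and count the number of ways to make each choice. First I would fix the root block $R(\rQ) \in \cR_r$; there are $|\cR_r|$ of these. Next, the pre-root-block is obtained from $R(\rQ)$ by turning some edges into facial $2$-cycles, and $\cF(\rQ)$ is the set of $N$ faces so created. So the second choice is: which edges of $R(\rQ)$ get ``doubled'' into a $2$-cycle. A $2$-connected quadrangulation with $r$ vertices has $2r - 4$ faces and hence $2r - 3$ edges (by Euler's formula, since it is bipartite with all faces of degree $4$: $e = 2f$ and $v - e + f = 2$ give $f = 2r-4$, $e = 2r-4$... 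I should double check: actually $v - e + f = 2$ with $e = 2f$ gives $v - f = 2$, so $f = r-2$ and $e = 2r - 4$). We need to choose a multiset of $N$ edges from the $2r-4$ edges of $R(\rQ)$ that get doubled — wait, it is a multiset because the same edge of the block could in principle be split more than once only if... no; each facial $2$-cycle of the pre-root-block collapses to a distinct edge of $R(\rQ)$, but several $2$-cycles may be nested inside what becomes a single face. The combinatorial count $\binom{N + 2r - 4}{N}$ strongly suggests a stars-and-bars argument: we are distributing the $N$ facial $2$-cycles among the $e(R(\rQ)) = 2r-4$ ``positions'' with repetition allowed and order within a position fixed by the canonical order, giving $\binom{N + (2r-4) - 1}{N}$... which is off by one from $\binom{N+2r-4}{N}$, so I would instead count $e(R(\rQ)) = 2r - 3$ edges — this discrepancy is exactly the step to get right, and it likely comes from whether one counts edges or from an extra ``root'' degree of freedom. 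I would carefully recount $e(R(\rQ))$ for a $2$-connected quadrangulation, or reexamine whether the collapsing operation $M^\circ$ and the pre-root-block structure introduce an extra slot.

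Having fixed which edges are doubled and with what multiplicities (the $\binom{N+2r-4}{N}$ factor), each of the $N$ facial $2$-cycles bounds a face $f(i)$ into which we glue the rooted submap $\rP_i$. Since $|v(\rP_i)| = y_i + 1$ and $\rP_i$ is a rooted quadrangulation sitting inside $f(i)$ sharing exactly the two vertices of the bounding $2$-cycle, the number of choices for the (unrooted, embedded) map $P_i$ together with its position is $|\cQ_{y_i + 1}|$ up to the choice of which of the two boundary vertices plays which role — this accounts for the $2^N = \prod_i 2$ factor, one binary choice of orientation/side per face. Concretely: a rooted quadrangulation with $y_i + 1$ vertices, when its root edge is placed on the boundary $2$-cycle, can be glued in $2$ ways (the root edge has two orientations, or equivalently the $2$-cycle has two sides / the two shared vertices can be matched in two ways). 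Multiplying across $i$ gives $2^N \prod_{i=1}^N |\cQ_{y_i+1}|$. I would verify that this reconstruction is a bijection: that distinct choice-sequences yield distinct $\rQ$ (the canonical order on $\cF$ and the $\prec_\rQ$-rooting of each $\rP_i$ make the labels on the boxes well-defined and recoverable), and that every $\rQ \in \cQ_{n,r,N}$ associated with $y$ arises exactly once.

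The main obstacle is pinning down the two ``small'' combinatorial factors, $\binom{N+2r-4}{N}$ and $2^N$, precisely — in particular, confirming the exact number of edges of a $2$-connected quadrangulation and understanding how the stars-and-bars count for placing $N$ indistinguishable $2$-cycles onto the edges of $R(\rQ)$ produces $\binom{N+2r-4}{N}$ rather than $\binom{N+2r-5}{N}$ or $\binom{N+2r-4}{N-1}$. A clean way to handle this is to think of it as choosing, for the root block plus its $N$ pendant slots, a sequence of $N$ edges with repetition from a set whose size I will verify is $2r-3$ (noting that in a rooted $2$-connected quadrangulation on $r$ vertices one has $2r-4$ faces and, by $v - e + f = 2$, exactly $2r - 3$... let me not commit here and instead treat the edge count as the quantity to be determined so that the stars-and-bars formula $\binom{N + (\#\text{edges}) - 1}{N}$ matches $\binom{N+2r-4}{N}$, i.e. $\#\text{edges} = 2r - 3$). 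Once these two factors are correctly derived, the $|\cR_r|$ and $\prod_i |\cQ_{y_i+1}|$ factors are immediate, and the bijection verification is routine bookkeeping using the canonical order and the $\prec_\rQ$-minimal rooting already set up in the text.
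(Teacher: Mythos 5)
Your overall strategy is the same as the paper's: reconstruct each $\rQ\in\cQ_{n,r,N}$ associated with $y$ from the independent choices of (i) the root block, (ii) the placement of the $N$ facial $2$-cycles, (iii) the pendant rooted quadrangulations, and (iv) a binary attachment choice per face, and multiply. The factors $|\cR_r|$, $\prod_i|\cQ_{y_i+1}|$ and $2^N$ are handled essentially as in the paper (modulo a descriptive slip: each $\rP_i$ shares exactly \emph{one} vertex with the bounding $2$-cycle, not two; the $2^N$ comes from choosing which of the two corners of $f(i)$ the tail of the root edge of $\rQ_i$ is identified with).

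However, the factor $\binom{N+2r-4}{N}$ is left genuinely underived, and your proposed resolution points the wrong way. You correctly compute $e(R(\rQ))=2r-4$ (from $f=r-2$ and $e=2f$), but then suggest "not committing" and reverse-engineering the number of slots to be $2r-3$, possibly by recounting the edges. The edge count is $2r-4$; the extra slot is exactly the "root degree of freedom" you mention in passing but do not establish. The paper's device is to adjoin an extra marked copy $e_0$ of the root edge $e_1$ (lying to its left), giving $2r-3$ slots $e_0,\ldots,e_{2r-4}$; one then chooses $(m_i)_{0\le i\le 2r-4}\in\N^{2r-3}$ with $\sum_i m_i=2r-3+N$ and splits $e_i$ into $m_i$ parallel copies. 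This produces $N+1$ facial $2$-cycles (the pair $e_0,e_1$ being already parallel contributes one extra), of which the one between the rightmost copy of $e_0$ and the leftmost copy of $e_1$ is collapsed to become the root edge of $\rQ$ — this is what makes the root edge recoverable among its parallel copies, and hence what makes the reconstruction a bijection. The count of positive compositions of $2r-3+N$ into $2r-3$ parts is $\binom{2r-4+N}{2r-4}=\binom{N+2r-4}{N}$. Without this (or an equivalent accounting of which parallel copy is the root), your step (ii) is not pinned down and the bijectivity claim at the end cannot be checked; so the proposal as written has a gap at precisely the step you flagged.
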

 
 \begin{proof}
To build a quadrangulation $\rQ\in\cQ_{n,r,N}$ associated with the allocation $y=(y_i: 1\le i\le N)\in \cB_{n-r,N}$, proceed as follows.
\begin{enumerate}
\item Let $\rR\in \cR_{r}$. Endow each edge of $\rR$ with an orientation so that the tail precedes the head in breadth-first order. List the resulting oriented edges as $({e_{i}}: 1\le i\le |e(\rR)|)$ in the increasing order of $\prec_{\rR}$. Let $e_0$ be a copy of $e_1$ lying to the left of $e_1$, using which we can locate the root edge among multiple edges; see \citep[Proposition 1.7]{ABW}.
\item Choose a vector $(m_i:0\le i\le |e(\rR)|)\in \N^{|e(\rR)|+1}$ with $\sum_{i=0}^{2r-4} m_i=|e(\rR)|+1+N= 2r-4+1+N$. Then for each $0\le i\le |e(\rR)|$, split ${e_i}$ into $m_i$ copies (if $m_i=1$ then there is no split), resulting in $N+1$ facial $2$-cycles. Collapse the $2$-cycle formed by the rightmost copy of $e_0$ and the leftmost copy of $e_1$, and root the map at the resulting edge. List the $N$ faces enclosed by the remaining facial $2$-cycles as $f(1),\ldots,f(N)$, in the canonical order of these faces as described in \refS{sec:graphintro}.
\item For each $1\le i\le N$ let $\rQ_{i}=(Q_i,e_i)\in \cQ_{y_i+1}$, where $e_i$ orients from $u_i$ to $v_i$.
\item For each $1\le i\le N$, choose one of the two resulting corners incident to $f(i)$ and denote it $c(i)$. Attach $\rQ_{i}$ to $c(i)$ by identifying $u_i$ with the vertex of $\rR$ incident to $c(i)$, then add another edge with endpoints $u_i$ and $v_i$, drawn so as to quadrangulate the face $f(i)$.
\end{enumerate}
In step (1), the number of choices for $\rR$ is equal to $|\cR_{r}|$. In step (2), the number of sequences $(m_i\in\N:0\le i\le 2r-4)$ with $\sum_{i=0}^{2r-4} m_i=2r-4+1+N$ is equal to ${N+2r-4\choose N}$. The number of choices in step (3) is $\prod_{i=1}^{N} \left\vert \cQ_{y_i+1}\right\vert$. In step (4), for each $1\le i\le N$, there are two ways to choose $c(i)$, so the total number of choices is $2^{N}$. The proof is then concluded by multiplying the previous four numbers of choices.
\end{proof}

\begin{proof}[{\bf Proof of \refP{cond}}]
Let $y=(y_i: 1\le i\le N)\in \cB_{n-r,N}$. Note that $|\cQ_{y_i+1}|  = \frac{2}{\sqrt{\pi}} \frac{12^{y_i-1}(1+\psi(y_i))}{y_i^{5/2}}$ for $1\le i\le N$ and $\sum_{i=1}^{N}(y_i-1)= n-r-N$. \refL{quadcount} then yields that $\Lambda_{n,r,N}^y=\left\vert \cR_{r}\right\vert {N+2r-4\choose N}  \left( \frac{1}{3\sqrt{\pi}}\right)^{N} 12^{n-r} \prod_{i=1}^{N}  \frac{1+\psi(y_i)}{y_i^{5/2}}$. Furthermore, given any $\rQ\in\cQ_{n,r,N}$ associated with $y$, we have $Y_i(\rQ)=y_i$ for $1\le i\le N$. So for $\rQ_n\in_u\cQ_{n,r,N}$, $\p{Y_i(\rQ_n)=y_i,1\le i\le N} = \frac{\Lambda_{n,r,N}^y}{\sum\limits_{z\in \cB_{n-r,N}}\Lambda_{n,r,N}^{z}}$. It follows that $\p{Y_i(\rQ_n)=y_i,1\le i\le N}=Z \prod_{i=1}^{N}\frac{1+\psi(y_i)}{y_i^{5/2}}$, for some normalizing constant $Z>0$. Finally, recalling the definition of $p(k)$ from (\ref{pdef}), we easily obtain $\p{Y_{i}(\rQ_n) = y_i, 1\le i\le N} = Z  \overline{w}^N\prod_{i=1}^{N} p(y_i)$ and $
\p{\xi_i=y_i,1\le i\le N\big\vert S_{N}= n-r} = \frac{\p{\xi_i = y_i,1\le i\le N}}{\p{S_{N} =n-r}}
= \p{S_{N}= n-r}^{-1} \prod_{i=1}^{N} p(y_i)$. The proposition follows immediately by comparing these two probabilities.
\end{proof}

\section{Random Allocation with Varying Balls-to-Boxes Ratio}\label{sec:allocation}

Recall from \refS{sec:outline} that $r:\N\to\N$ is a function with $r(n)>(\ln n)^{25}$ for all $n$ and $r(n)=o(n)$. In the remainder of the paper, for each $n\in\N$ write $m(n) = n-r(n)$, and let $N:\N\to\N$ be such that $1\le N(n) \le m(n)$. $N(n)$ corresponds to the number of facial $2$-cycles in the pre-root-block of a random quadrangulation with $n$ vertices. Also recall from \refS{sec:stable} that for $k\in\N$, $\p{\xi=k}=p(k)$ where $p(k)$ is given in (\ref{pdef}), and $S_k=\sum_{i=1}^k\xi_i$ where $\xi_1,\xi_2,\ldots$ are independent copies of $\xi$. 

This section aims to describe the law of $(\xi_i:1\le i\le N(n))$ conditioned on $S_{N(n)}=m(n)$. As discussed at the start of last section, this is a random allocation problem, with $m(n)$ unlabelled balls and $N(n)$ labelled boxes in total, viewing $\xi_i$ as the number of balls in the $i$:th box. There are many established results for balls-in-boxes models where the number of balls is proportional to the number of boxes; see the survey by \citet{J}. However, here we need to allow the balls-to-boxes ratio $\frac{m(n)}{N(n)}$ to tend to infinity, so a variant of the established work is needed. We accomplish this in Propositions~\ref{allocationprop} and~\ref{allocationprop2}, extending the result of \citep[Theorem 19.34]{J}. These bounds can be applied to a uniform quadrangulation in $\cQ_{n,r(n),N(n)}$, by using \refP{cond}. Some analysis in this section is related to a so-called one-jump principle for random walks; see, for example, \citep[Section 2.3]{AS} in a different, easier context.

Given $k\in\N$, for $(x_1,\ldots,x_k)\subset \R^k$, write $(x_{(1)},\ldots,x_{(k)})=(x_{k,(1)},\ldots,x_{k,(k)})$ as its decreasing ordered sequence. In particular, write $\xi_{(1)} = \xi_{N(n),(1)} = \max(\xi_i: 1\le i\le N(n))$. 

Let $\nu=\E{\xi}$; clearly, $\nu<\infty$.

 \begin{prop}\label{allocationprop}
Given $\limsup\limits_{n\to\infty} \frac{\nu N(n)}{m(n)}<1$, $\p{ \xi_{(1)} \le  \frac{m(n)}{(\ln n)^2} ~\Big\vert~S_{N(n)}=m(n)} = O\left(n^{-10}\right)$.
\end{prop}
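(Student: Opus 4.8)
The plan is to bound the numerator $\p{\xi_{(1)}\le L,\ S_N=m}$ and the denominator $\p{S_N=m}$ of the conditional probability separately, where throughout $m=m(n)$, $N=N(n)$, and $L:=m/(\ln n)^2$ is a truncation level. For the numerator, observe that on $\{\xi_{(1)}\le L\}$ every $\xi_i$ lies in $\{1,\dots,\lfloor L\rfloor\}$ and that conditioning on this event keeps the $\xi_i$ independent, so
\[
\p{\xi_{(1)}\le L,\ S_N=m}=\p{\xi\le L}^{N}\,\p{\tilde S_N=m}\le \p{\tilde S_N=m},
\]
where $\tilde S_N=\sum_{i=1}^N\tilde\xi_i$ and $\tilde\xi_1,\dots,\tilde\xi_N$ are i.i.d.\ with the law of $\xi$ conditioned on $\{\xi\le L\}$. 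From (\ref{pdef}) and $\psi(k)\to0$ one has $p(k)\asymp k^{-5/2}$, hence $\p{\xi>L}\to0$, $\E{\tilde\xi}\le\nu/\p{\xi\le L}=\nu(1+o(1))$, and $\V{\tilde\xi}\le\E{\tilde\xi^2}\lesssim\sum_{k\le L}k^{-1/2}\lesssim L^{1/2}=m^{1/2}/\ln n$.

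Since $\limsup_n \nu N/m<1$, there is $\delta>0$ with $\E{\tilde S_N}=N\E{\tilde\xi}\le(1-\delta)m$ for all large $n$, so $\{\tilde S_N=m\}$ forces a deviation $t:=m-\E{\tilde S_N}\in[\delta m,m]$. Applying Bernstein's inequality to the summands $\tilde\xi_i-\E{\tilde\xi_i}$, which are bounded above by $L$,
\[
\p{\tilde S_N=m}\le\exp\!\left(-\frac{t^2/2}{\,N\,\V{\tilde\xi}+Lt/3\,}\right).
\]
Inserting $t\asymp m$, $\V{\tilde\xi}\lesssim m^{1/2}/\ln n$ and $L=m/(\ln n)^2$ makes the exponent $\gtrsim\min\!\big(m^{3/2}\ln n/N,\ (\ln n)^2\big)$; since $N\le m$ and $m=n-r(n)\ge n/2\gg(\ln n)^2$ for large $n$, both terms are $\gtrsim(\ln n)^2$, so $\p{\xi_{(1)}\le L,\ S_N=m}\le\exp(-c(\ln n)^2)$ for some $c>0$ and all large $n$.

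For the denominator I would show $\p{S_N=m}\ge n^{-4}$ for all large $n$. When $N$ stays bounded this is immediate, e.g.\ $\p{S_N=m}\ge p(m-N+1)\,p(1)^{N-1}\gtrsim m^{-5/2}$. When $N$ is large, decompose on the value of $\xi_1$: since $\xi$ has mean $\nu$ and tail $\p{\xi>k}\asymp k^{-3/2}$, the central limit theorem for the domain of attraction of a $3/2$-stable law (see \cite{J}) provides a constant $K$ with $\p{|S_{N-1}-\nu(N-1)|\le K(N-1)^{2/3}}\ge\tfrac12$ for all large $n$; the interval on the left contains $\asymp(N-1)^{2/3}$ integers, all in the support $\{N-1,N,\dots\}$ of $S_{N-1}$, so by pigeonhole some integer $\ell$ there satisfies $\p{S_{N-1}=\ell}\gtrsim(N-1)^{-2/3}\ge n^{-2/3}$. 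Set $j:=m-\ell$; then $j$ differs from $m-\nu N$ by $o(n)$, and since $\nu N\le(1-\delta)m$ and $m\ge n/2$ we get $\tfrac{\delta}{3}n\le j\le n$, whence $p(j)\gtrsim j^{-5/2}\gtrsim n^{-5/2}$. Therefore $\p{S_N=m}\ge\p{\xi_1=j}\,\p{S_{N-1}=\ell}\gtrsim n^{-5/2-2/3}\ge n^{-4}$. Combining the two estimates,
\[
\p{\xi_{(1)}\le L\mid S_N=m}\le n^{4}\exp\!\big(-c(\ln n)^2\big)=O(n^{-10}),
\]
because $c(\ln n)^2-4\ln n\ge10\ln n$ eventually.

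The step I expect to be the main obstacle is the polynomial lower bound on $\p{S_{N(n)}=m(n)}$ uniformly over the admissible range of $N(n)$: the summand $\xi$ has infinite variance and the balls-to-boxes ratio $m/N$ may diverge, so one cannot simply invoke a finite-variance local limit theorem, and the stable-CLT-plus-pigeonhole substitute needs care with the support and with confining the forced coordinate $j$ to a polynomial window. A secondary technical point is verifying that the Bernstein exponent stays $\gtrsim(\ln n)^2$ uniformly in $N\le m$ despite $\V{\tilde\xi}$ growing like $L^{1/2}$.
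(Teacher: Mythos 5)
Your proof is correct, and its overall strategy coincides with the paper's: upper-bound the numerator $\p{\xi_{(1)}\le L,\,S_{N}=m}$ by an exponential estimate of size $e^{-\Theta((\ln n)^2)}$ for a sum of variables truncated at $L=m/(\ln n)^2$ exceeding $m$, and lower-bound the denominator $\p{S_{N}=m}$ polynomially via a one-jump decomposition. The execution differs in two places. For the numerator, the paper first decomposes over the value $k$ of the maximum, bounds $\p{\xi_{(1)}=k,S_N=m}\le N\,p(k)\,\p{S^k_{N-1}=m-k}$, sums over $k\in[m/N,\,m/(\ln n)^2]$, and controls the truncated sum with a bespoke Chernoff bound (\refL{lem:chernoff}, taking $s=1/k$); you instead condition directly on $\{\xi_i\le L\ \forall i\}$ and apply one-sided Bernstein, which reaches the same exponent $\asymp(1-\lambda)m/L\asymp(\ln n)^2$ without the sum over $k$ and is arguably cleaner. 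Your check that the $Lt/3$ term dominates $N\V{\tilde\xi}\lesssim m^{3/2}/\ln n$ is the right verification and goes through. For the denominator, the paper proves the sharper local estimate $\p{S_N=m}\ge \delta N/m^{5/2}$ (\refL{lem:sum}), which it needs again in \refP{allocationprop2} and \refP{prop:Nsize}, where the numerators are only polynomially small; your pigeonhole-plus-tightness substitute yields only $n^{-4}$, which is harmless here because your numerator is superpolynomially small, but would not suffice for those other results. The points you flag as obstacles are in fact handled: the forced coordinate $j=m-\ell$ is confined to $[\delta n/3,n]$ exactly because $m-\nu N\ge\delta m$ while $\ell$ deviates from $\nu(N-1)$ by only $O(N^{2/3})=o(n)$, and the uniformity in $N\le m$ is secured by the $\min$ computation you give (the bounded-$N$ and large-$N$ cases for the denominator should just be split at a fixed threshold $N_0$ beyond which the stable tightness bound holds).
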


\begin{prop}\label{allocationprop2}
Given $\limsup\limits_{n\to\infty} \frac{\nu N(n)}{m(n)}<1$,
$\p{ \xi_{(1)} >\frac{m(n)}{(\ln n)^2},\xi_{(2)}> r(n)^{5/6}~\Big\vert~S_{N(n)}=m(n)} = O\left( N(n) \left(\ln  n\right)^{5} r(n)^{-5/4} \right)$.
 \end{prop}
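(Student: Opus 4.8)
\emph{Proof plan.} The idea is to turn the two–order–statistic event into a pair event and estimate the resulting joint probability by local limit bounds for the partial sums. Write $m=m(n)$, $N=N(n)$, $r=r(n)$, $L_n=\lfloor m/(\ln n)^2\rfloor$ and $R_n=\lfloor r^{5/6}\rfloor$ (so $R_n<L_n$ for large $n$, as $m\asymp n$ while $r=o(n)$). Since $p(k)\asymp k^{-5/2}$ (because $1+\psi$ is bounded above and below away from $0$), the law of $\xi$ has finite mean $\nu$ but infinite variance and lies in the domain of attraction of the spectrally positive $\tfrac32$-stable law; in particular its tail satisfies $\bar F(t):=\p{\xi\ge t}\asymp t^{-3/2}$, so $\bar F(L_n)\asymp m^{-3/2}(\ln n)^3$ and $\bar F(R_n)\asymp r^{-5/4}$. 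The hypothesis $\limsup\nu N/m<1$ gives $\delta>0$ with $\nu N\le(1-\delta)m$ for all large $n$, so the excess $E:=m-\nu N$ satisfies $E\asymp m$, and since $N\le m$ we also have $E\gg N^{2/3}$ — the conditioning $\{S_N=m\}$ is thus well inside the one–big–jump regime. I would use three facts about $S_j=\sum_{i=1}^j\xi_i$, all standard for heavy-tailed i.i.d.\ sums (a Gnedenko-type local limit theorem and local big-jump estimates) and available from the analysis underlying \refP{allocationprop}: (i) $\p{S_j=k}\le Cj^{-2/3}$ for all $j,k$; (ii) $\p{S_j=k}\le C\,j\,(k-\nu j)^{-5/2}$ once $k\ge\nu j+j^{2/3}$; and (iii) the matching lower bound $\p{S_N=m}\ge c\,N\,m^{-5/2}$, realised by exactly one of the $N$ summands lying in a length-$O(N^{2/3})$ window around $E$ while the remaining ones sum to within $O(N^{2/3})$ of their mean.

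\emph{Step 1 (reduction to a pair event).} On $\{\xi_{(1)}>L_n,\ \xi_{(2)}>R_n\}$ there are two distinct indices carrying values exceeding $L_n$ and $R_n$ respectively, so a union bound over ordered pairs together with exchangeability gives
\[
\p{\xi_{(1)}>L_n,\ \xi_{(2)}>R_n,\ S_N=m}\ \le\ N(N-1)\,\Pi,\qquad \Pi:=\p{\xi_1>L_n,\ \xi_2>R_n,\ S_N=m}.
\]
Dividing by (iii), the proposition follows once I show $\Pi\le C\,m^{-5/2}r^{-5/4}$ (this already beats the stated $(\ln n)^5$, which is not optimal).

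\emph{Step 2 (estimating $\Pi$).} Conditioning on $\xi_2=b$ and setting $S'=\xi_3+\dots+\xi_N$ (a sum of $N-2$ i.i.d.\ copies, $\E{S'}=\nu(N-2)\le(1-\delta)m$), one has $\Pi=\sum_{b>R_n}p(b)\sum_{a>L_n}p(a)\,\p{S'=m-a-b}$. I split according to whether $\xi_2$ is small or itself of order $E$. For $R_n<b\le\tfrac{\delta}{2}m$, put $F=m-b-\nu(N-2)\asymp m$; writing $c=m-a-b$, so $c-\nu(N-2)=F-a$, I split the inner sum at $a=F-(N-2)^{2/3}$. For $a$ below that threshold, (ii) gives $\p{S'=c}\le C(N-2)(F-a)^{-5/2}$, and $\sum_a a^{-5/2}(F-a)^{-5/2}$ (split at $F/2$) is $\lesssim F^{-5/2}\big(L_n^{-3/2}+N^{-1}\big)$; with $F\asymp m$ and $N\le m$ this piece is $\le Cm^{-5/2}$. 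For $a$ above the threshold one has $a\asymp m$, hence $p(a)\le Cm^{-5/2}$, and either $c$ lies in a length-$O(N^{2/3})$ window around $\nu(N-2)$ — where (i) gives the bound $CN^{2/3}\cdot m^{-5/2}\cdot N^{-2/3}=Cm^{-5/2}$ — or $c$ is below that window, in which case $\sum_a\p{S'=m-a-b}\le1$ again yields $Cm^{-5/2}$. Thus this range contributes $\le Cm^{-5/2}\bar F(R_n)\asymp Cm^{-5/2}r^{-5/4}$. For the remaining range $b>\tfrac{\delta}{2}m$ I keep the two large variables together: substituting $s=a+b$ and using $p(s-a)\le C(\delta m/2)^{-5/2}\asymp m^{-5/2}$ and $\sum_s\p{S'=m-s}\le1$ bounds it by $Cm^{-5/2}\sum_{a>L_n}p(a)\asymp Cm^{-5/2}L_n^{-3/2}\asymp m^{-4}(\ln n)^3$, which is $\le m^{-5/2}r^{-5/4}$ for large $n$ since $r=o(m)$. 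Adding the two ranges, $\Pi\le Cm^{-5/2}r^{-5/4}$, and so $\p{\xi_{(1)}>L_n,\xi_{(2)}>R_n\mid S_N=m}\le N(N-1)\Pi/\p{S_N=m}=O\!\big(Nr^{-5/4}\big)=O\!\big(N(\ln n)^5 r^{-5/4}\big)$.

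\emph{The main obstacle} is the inner $a$-sum for $b$ small: one cannot afford the uniform estimate $\p{S'=c}\le CN^{-2/3}$ across its whole range, since that loses a factor of order $m/N^{2/3}$ and destroys the bound. The resolution is that $\p{S'=c}$ is as large as $N^{-2/3}$ only for $c$ within $O(N^{2/3})$ of its mean $\nu(N-2)\approx\nu N$, which pins $a=m-b-c$ to a window of length $O(N^{2/3})$ about $F\asymp m$, where $p(a)$ is already of size $m^{-5/2}$; over the far longer range of smaller $a$ one must instead use the genuine decay $\p{S'=c}\lesssim N(c-\nu N)^{-5/2}$ from (ii). Correctly balancing these two regimes, separately disposing of the case where $\xi_2$ is itself of order $E$ (where the constraint $\xi_1>L_n$ cannot be discarded), and — in (iii) — counting the $N$ possible locations of the big summand (rather than the $N^{2/3}$ one gets from a single-summand bound) are the technical crux; everything else is the union bound and the local-limit inputs, which are already in hand once \refP{allocationprop} is proved.
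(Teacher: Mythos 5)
Your Step 1 --- the union bound over ordered pairs of indices via exchangeability, reducing the problem to $N(n)^2\,\p{\xi_1>L_n,\ \xi_2>R_n,\ S_{N(n)}=m(n)}$ and then dividing by the lower bound $\p{S_{N(n)}=m(n)}\ge \delta N(n)m(n)^{-5/2}$ of \refL{lem:sum} --- is exactly the paper's opening move. After that the two arguments diverge sharply. The paper never touches local limit theory: it bounds $\p{\xi_1=a}\le C\bigl(m(n)/(\ln n)^2\bigr)^{-5/2}$ uniformly over the range $a>L_n$ forced on the largest summand, and then observes that
\[
\sum_{a>L_n}\p{\xi_2>R_n,\ \xi_2+\dots+\xi_{N(n)}=m(n)-a}\ \le\ \p{\xi>R_n}=O\bigl(r(n)^{-5/4}\bigr),
\]
simply because the events $\{\xi_2+\dots+\xi_{N(n)}=m(n)-a\}$ are disjoint as $a$ varies. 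This two-line estimate is precisely what produces the $(\ln n)^5$ in the statement; that slack is there to be spent, and spending it makes the whole of your Step 2 unnecessary.

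Your Step 2 aims at the sharper bound $\Pi\le Cm(n)^{-5/2}r(n)^{-5/4}$ with no logarithmic loss, and the case analysis (splitting on whether $\xi_2\le \delta m/2$, and splitting the $a$-sum at $F-(N-2)^{2/3}$) is arithmetically coherent --- the convolution estimates do close. The genuine gap is in the inputs: your facts (i) ($\p{S_j=k}\le Cj^{-2/3}$, a Gnedenko-type local limit bound) and (ii) ($\p{S_j=k}\le Cj(k-\nu j)^{-5/2}$ for $k\ge \nu j+j^{2/3}$, a local one-big-jump bound) are \emph{not} ``available from the analysis underlying \refP{allocationprop}.'' That analysis consists of the Chernoff bound for the truncated sum (\refL{lem:chernoff}) and the lower bound of \refL{lem:sum}; neither yields (i) or (ii). Both facts are true and citable from the heavy-tailed local-limit literature (the law of $\xi$ is aperiodic and lies in the domain of attraction of a spectrally positive $\tfrac32$-stable law), but as written they are assertions, not consequences of anything already in hand, and they carry essentially all the analytic weight of your argument. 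Since the proposition only claims the bound with the $(\ln n)^5$ factor, the right fix is to drop (i) and (ii) altogether and replace Step 2 by the uniform bound on $p(a)$ over $a>L_n$ described above.
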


Before proving Propositions~\ref{allocationprop} and~\ref{allocationprop2}, we state an immediate application to quadrangulations, below. Recall the definitions of $\cQ_{n,r,k}$ from (\ref{eq:Qnrk}) and $Y_i(\cdot)$ from (\ref{Ydef}). 

 \begin{cor}\label{quadregime1}
Suppose that $\limsup_{n\to\infty} \frac{\nu N(n)}{m(n)}<1$. Then for $\rQ_n\in_u\cQ_{n,r(n),N(n)}$,
\begin{equation}\label{eq:lar}
\p{ Y_{(1)}(\rQ_n)\le \frac{m(n)}{(\ln  n)^2}} = O\left(n^{-10}\right),
\end{equation}
and $\p{ Y_{(1)}(\rQ_n)>\frac{m(n)}{(\ln n)^2},Y_{(2)}(\rQ_n)> r(n)^{5/6}} =  O\left( N(n) \left(\ln n\right)^{5} r(n)^{-5/4} \right);$ it follows that
\begin{equation}\label{eq:sec}
\p{Y_{(2)}(\rQ_n)> r(n)^{5/6}}= O\left( N(n) \left(\ln n\right)^{5} r(n)^{-5/4} \right).
\end{equation}
 \end{cor}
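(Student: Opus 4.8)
The plan is to push the two allocation estimates, \refP{allocationprop} and \refP{allocationprop2}, through to quadrangulations by means of \refP{cond}, and then combine them with a crude union bound. First I would observe that, for $n$ large enough that $r(n)>1$, \refP{cond} identifies the joint law of $\left(Y_i(\rQ_n):1\le i\le N(n)\right)$ with the joint conditional law of $\left(\xi_i:1\le i\le N(n)\right)$ given $S_{N(n)}=m(n)$; in particular the pair of order statistics $\left(Y_{(1)}(\rQ_n),Y_{(2)}(\rQ_n)\right)$ has the same law as $\left(\xi_{(1)},\xi_{(2)}\right)$ under that conditioning. Hence, for all $s,t$,
\[
\p{Y_{(1)}(\rQ_n)\le t}=\p{\xi_{(1)}\le t\mid S_{N(n)}=m(n)},\quad
\p{Y_{(1)}(\rQ_n)>s,\,Y_{(2)}(\rQ_n)>t}=\p{\xi_{(1)}>s,\,\xi_{(2)}>t\mid S_{N(n)}=m(n)}.
\]

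Next I would apply the two propositions directly, noting that the corollary's hypothesis $\limsup_n \nu N(n)/m(n)<1$ is exactly the hypothesis of both \refP{allocationprop} and \refP{allocationprop2}. Taking $t=m(n)/(\ln n)^2$ in the first identity and invoking \refP{allocationprop} yields (\ref{eq:lar}). Taking $s=m(n)/(\ln n)^2$ and $t=r(n)^{5/6}$ in the second identity and invoking \refP{allocationprop2} yields the middle bound, namely $\p{Y_{(1)}(\rQ_n)>m(n)/(\ln n)^2,\,Y_{(2)}(\rQ_n)>r(n)^{5/6}}=O\left(N(n)(\ln n)^5 r(n)^{-5/4}\right)$.

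Finally, to deduce (\ref{eq:sec}) I would split on the size of $Y_{(1)}(\rQ_n)$: since $Y_{(2)}(\rQ_n)\le Y_{(1)}(\rQ_n)$ always,
\[
\p{Y_{(2)}(\rQ_n)>r(n)^{5/6}}\le \p{Y_{(1)}(\rQ_n)>\tfrac{m(n)}{(\ln n)^2},\,Y_{(2)}(\rQ_n)>r(n)^{5/6}}+\p{Y_{(1)}(\rQ_n)\le \tfrac{m(n)}{(\ln n)^2}},
\]
where the first term on the right is $O\left(N(n)(\ln n)^5 r(n)^{-5/4}\right)$ by the middle bound and the second is $O(n^{-10})$ by (\ref{eq:lar}). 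Since $r(n)<n$ for $n$ large and $N(n)\ge1$, one has $N(n)(\ln n)^5 r(n)^{-5/4}\ge n^{-5/4}$, which dominates $n^{-10}$, so the $O(n^{-10})$ term is absorbed and (\ref{eq:sec}) follows. I do not expect a genuine obstacle here, as all the substance lies in \refP{allocationprop} and \refP{allocationprop2}; the only step requiring a moment's care is this last domination, which uses just the crude inequalities $r(n)<n$ and $N(n)\ge1$ together with $-5/4>-10$.
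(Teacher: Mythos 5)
Your proposal is correct and follows essentially the same route as the paper: identify the law of $(Y_i(\rQ_n))$ with the conditioned allocation via \refP{cond}, apply Propositions~\ref{allocationprop} and~\ref{allocationprop2}, and deduce (\ref{eq:sec}) from the same event inclusion the paper uses. Your explicit check that the $O(n^{-10})$ term is absorbed (via $r(n)<n$ and $N(n)\ge 1$) is a detail the paper leaves implicit, and it is valid.
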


The first two equalities follow immediately from Propositions \ref{cond}, \ref{allocationprop}, and \ref{allocationprop2}. For the last assertion, simply note that for $\rQ_n\in_u\cQ_{n,r(n),N(n)}$,
\[\left\{Y_{(2)}(\rQ_n)> r(n)^{5/6}\right\}
\subset \left\{ Y_{(1)}(\rQ_n) >\frac{m(n)}{(\ln n)^2},Y_{(2)}(\rQ_n)> r(n)^{5/6}\right\}\bigcup\left\{Y_{(1)}(\rQ_n)\le \frac{m(n)}{(\ln n)^2}\right\}.\]

\refC{quadregime1} gives size-bounds for the largest and second largest submaps pendant to the root block of a uniform quadrangulation in $\cQ_{n,r(n),N(n)}$. In the next section, we deduce a bound on the number of facial $2$-cycles in the pre-root-block of a uniform quadrangulation in $\cQ_{n,r(n)}$, which entails us to apply the bounds of \refC{quadregime1} to the latter setting.

Now we turn to establishing Propositions~\ref{allocationprop} and~\ref{allocationprop2}, starting with two lemmas related to sums of asymptotically stable distributions. 

For $k,\ell\in\N$, write $\xi_\ell^k = \xi_\ell \I{\xi_\ell\le k}$ and $S_\ell^k = \sum_{i=1}^\ell \xi_i^k$. 

\begin{lem}\label{lem:chernoff}
For $m\in\N$ and $x>0$, $\p{S_{m}^{k}\ge x}\le  e^{-\frac{x}{k}+ \frac{\nu m}{k}(1+o(1))}$ as $k\to\infty$.
\end{lem}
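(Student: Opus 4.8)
The plan is to apply a standard Chernoff/exponential-moment bound to the truncated sum $S_m^k = \sum_{i=1}^m \xi_i^k$, where $\xi_i^k = \xi_i \mathbf{1}_{[\xi_i \le k]}$. Since the $\xi_i^k$ are i.i.d.\ and bounded by $k$, for any $t>0$ Markov's inequality gives
\[
\p{S_m^k \ge x} \le e^{-tx}\left(\E{e^{t\xi^k}}\right)^m.
\]
The natural choice is $t = 1/k$, for which I would show $\E{e^{\xi^k/k}} \le 1 + \frac{\nu}{k}(1+o(1))$ as $k\to\infty$, and then use $1 + \frac{\nu}{k}(1+o(1)) \le e^{\frac{\nu}{k}(1+o(1))}$ to obtain the claimed bound $e^{-x/k + \frac{\nu m}{k}(1+o(1))}$.

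The key step is therefore the estimate on the moment generating function. First I would write $\E{e^{\xi^k/k}} = \p{\xi > k} + \E{e^{\xi/k}\mathbf{1}_{[\xi \le k]}}$. On the event $\{\xi \le k\}$ the exponent $\xi/k$ lies in $[0,1]$, where $e^u \le 1 + u + u^2$ (or $e^u \le 1 + u(e-1)$, or simply $e^u = 1 + u + O(u^2)$ uniformly); using $e^u \le 1 + u + u^2$ gives
\[
\E{e^{\xi/k}\mathbf{1}_{[\xi \le k]}} \le \p{\xi \le k} + \frac{1}{k}\E{\xi\,\mathbf{1}_{[\xi\le k]}} + \frac{1}{k^2}\E{\xi^2\,\mathbf{1}_{[\xi\le k]}}.
\]
Combining with $\p{\xi > k}$, the constant term is exactly $1$, the linear term is $\frac{1}{k}\E{\xi\,\mathbf{1}_{[\xi\le k]}} \le \frac{\nu}{k}$ and converges to $\frac{\nu}{k}$ as $k\to\infty$ by monotone convergence (recall $\nu = \E{\xi}<\infty$). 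It remains to check that the remaining terms are $o(1/k)$: from the explicit form $p(\ell) = \frac{1+\psi(\ell)}{\overline w\,\ell^{5/2}}$ with $\psi(\ell)\to 0$, we have $p(\ell) = \Theta(\ell^{-5/2})$, so $\E{\xi^2 \mathbf{1}_{[\xi\le k]}} = \sum_{\ell\le k}\ell^2 p(\ell) = \Theta\!\big(\sum_{\ell\le k}\ell^{-1/2}\big) = \Theta(k^{1/2})$, whence $\frac{1}{k^2}\E{\xi^2\mathbf{1}_{[\xi\le k]}} = \Theta(k^{-3/2}) = o(1/k)$; similarly $\p{\xi>k} = \Theta(k^{-3/2}) = o(1/k)$, and $\frac{\nu}{k} - \frac{1}{k}\E{\xi\,\mathbf 1_{[\xi\le k]}} = \frac{1}{k}\E{\xi\,\mathbf 1_{[\xi> k]}} = \Theta(k^{-3/2}) = o(1/k)$ as well. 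Hence $\E{e^{\xi^k/k}} = 1 + \frac{\nu}{k}(1+o(1))$, and raising to the $m$-th power and multiplying by $e^{-x/k}$ yields the lemma.

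I do not anticipate a serious obstacle here; the only mild subtlety is that the $o(1)$ in the statement is as $k\to\infty$ uniformly in $m$ and $x$, which the argument above respects since the $o(1)$ arises entirely from the single-variable MGF estimate before raising to the power $m$. One should just be slightly careful that $\big(1 + \tfrac{\nu}{k}(1+o(1))\big)^m \le \exp\!\big(\tfrac{\nu m}{k}(1+o(1))\big)$ uses $1+u\le e^u$ with the same $o(1)$; this is immediate.
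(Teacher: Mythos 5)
Your proposal is correct and follows essentially the same route as the paper: a Chernoff bound with the choice $s=1/k$, the elementary inequality $e^{u}\le 1+u+u^{2}$ for $u\in[0,1]$ applied to the truncated variable, and the tail estimate $p(\ell)=\Theta(\ell^{-5/2})$ to show the second-order term $\frac{1}{k^{2}}\E{\xi^{2}\I{\xi\le k}}=O(k^{-3/2})=o(1/k)$. Your additional remark that the $o(1)$ is uniform in $m$ and $x$ because it arises solely from the single-variable moment generating function estimate is accurate and is implicitly how the paper uses the lemma.
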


\begin{proof}
Since $(\xi_i^k:i\in\N)$ are iid, for $x>0$ and $s>0$, by Chernorff inequality,
\begin{align}
\p{S_{m}^{k}\ge x}\le e^{-sx} \E{e^{s S_m^{k}}} = e^{-sx} \left(\E{e^{s \xi_1^{k}}}\right)^{m},\label{eq:upbd}
\end{align}
where $\E{e^{s \xi_1^{k}}}= \p{\xi>k}+ \sum_{t=1}^{k} \p{\xi=t}  e^{st}
\le 1+ s \nu + \sum_{t=1}^{k} \p{\xi=t} \left(e^{st}-1 - st\right)$. Furthermore, (\ref{pdef}) yields that there exists $c>0$, not depending on $k$, such that for all $t\ge1$, $\p{\xi=t}\le c t^{-5/2}$. It is easily seen that, if $st\le 1$, we have $e^{st}-1 - st\le  s^2 t^2$, and so $\E{e^{s\xi_1^{k}}}\le1+s\nu+ c\sum_{t=1}^{k} t^{-5/2} s^2 t^2$. Now take $s= \frac1k$, then $\sum_{t=1}^{k} t^{-5/2} s^2 t^2
= \sum_{t=1}^{k} \frac{t^{-1/2}}{k^2}
=O\left(  \frac{k^{1/2}}{k^2}\right)
=o\left(\frac{1}{k}\right)$ as $k\to\infty$. Altogether, $\E{e^{\frac{\xi_1^{k}}{k}}}\le 1+\frac{\nu}{k} + o\left(\frac{1}{k}\right) \le e^{\frac{\nu}{k}(1+o(1))}$. With $s=\frac1k$, the lemma follows immediately from (\ref{eq:upbd}).
\end{proof}

Recall that $S_k =\sum_{i=1}^k \xi_i$ for $k\in\N$ and $\nu=\E{\xi}$.

\begin{lem}\label{lem:sum}
Fix $\lambda\in (0,1)$. There exists $\delta=\delta(\lambda)>0$ such that for sufficiently large integers $N$ and $m$ with $\lambda m\ge \nu N = \E{S_N}$, we have $\p{S_N=m}\ge  \frac{\delta N }{m^{5/2}}$.
\end{lem}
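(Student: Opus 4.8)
The plan is to establish a local limit lower bound for the random walk $S_N = \sum_{i=1}^N \xi_i$ at the point $m$, in the regime where $m$ is at least a constant multiple of the mean $\nu N$. The key structural feature is that $\xi$ has a heavy (polynomial, index $5/2$) tail: $\p{\xi = k} = p(k) \asymp k^{-5/2}$. In this regime, the dominant contribution to $\{S_N = m\}$ comes from the "one big jump" event — one of the $N$ summands is of order $m$, while the rest sum to roughly $\nu N = O(\lambda m)$. This is the one-jump principle alluded to in the text.

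Concretely, I would proceed as follows. Fix $\lambda \in (0,1)$ and pick an auxiliary constant $\beta \in (\lambda, 1)$, say $\beta = (1+\lambda)/2$. First I would show that $\p{S_{N-1} \in [\,(1-\beta)m,\; \beta m\,]} \ge c_0$ for some $c_0 = c_0(\lambda) > 0$ and all large $N, m$ with $\nu N \le \lambda m$: since $\E{S_{N-1}} = \nu(N-1) \le \lambda m < \beta m$, and since (depending on whether $\nu N$ is comparable to $m$ or much smaller) the walk $S_{N-1}$ is either concentrated near its mean by Chebyshev — note $\va(\xi) < \infty$ because the tail index is $5/2 > 2$ — or, when $\nu N$ is small compared to $m$, one still has a uniform positive lower bound on $\p{S_{N-1} \le \beta m}$ while $\p{S_{N-1} \ge (1-\beta)m}$ can be arranged by the same second-moment estimate after possibly enlarging the window; in all cases the probability that $S_{N-1}$ lands in an interval of length $\Theta(m)$ containing a point within $O(\sqrt{\va(\xi)\,N})$ of its mean is bounded below by a constant. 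Then, conditioning on the value $s = S_{N-1}$ in that window and using independence of $\xi_N$,
\[
\p{S_N = m} \;\ge\; \p{S_{N-1} \in [(1-\beta)m, \beta m]}\cdot \min_{s \in [(1-\beta)m,\,\beta m]} \p{\xi_N = m - s}.
\]
For $s$ in that window, $m - s \in [(1-\beta)m,\, \beta m]$, so $m - s = \Theta(m)$, and hence $\p{\xi_N = m-s} = p(m-s) = \frac{1+\psi(m-s)}{\overline{w}\,(m-s)^{5/2}} \ge \frac{c_1}{m^{5/2}}$ for a constant $c_1 > 0$ (using $\psi(x) \to 0$ and $\beta < 1$). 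Combining gives $\p{S_N = m} \ge c_0 c_1 / m^{5/2}$, which is even stronger than the claimed $\delta N / m^{5/2}$ when... — wait, no: the claim has a factor $N$, so I should instead sum the one-jump contribution over which index carries the big jump. That is, I would write, for disjoint choices of the index $j \in \{1,\dots,N\}$ carrying the large value,
\[
\p{S_N = m} \;\ge\; \sum_{j=1}^{N} \p{\xi_j = m - s_j,\ \textstyle\sum_{i \ne j}\xi_i = s_j \text{ for some admissible } s_j},
\]
and after handling the overlap between different $j$'s (the events "$\xi_j$ large" for distinct $j$ are nearly disjoint since two jumps of order $m$ would force the total to exceed $m$), one extracts the extra factor of $N$. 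This gives the bound $\p{S_N = m} \ge \delta N / m^{5/2}$ with $\delta = \delta(\lambda) > 0$.

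The main obstacle is the interaction between the two regimes $\nu N \asymp m$ (balls comparable to boxes — classical) and $\nu N \ll m$ (balls vastly outnumber boxes), since the concentration window for $S_{N-1}$ and the correct power of $N$ must be tracked uniformly across both. In the first regime a standard local CLT / second-moment argument suffices and the factor $N$ is genuine; in the second regime $S_{N-1}$ is small compared to $m$ with overwhelming probability, so the "big jump" $\xi_j = \Theta(m)$ for a single $j$ already dominates, and the near-disjointness of these $N$ events across $j$ must be quantified carefully (one shows the probability of two summands each exceeding, say, $m/N$ — or more simply each of order $m$ — is negligible compared to the main term). Reconciling these and verifying that $\delta$ can be chosen depending only on $\lambda$ (not on $N$ or $m$ individually) is the delicate point; everything else is routine tail estimation using $p(k) \asymp k^{-5/2}$ and the finiteness of $\nu$ and $\va(\xi)$.
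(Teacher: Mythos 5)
Your final strategy --- decomposing $\{S_N=m\}$ according to which single index carries a jump of order $m$, with the remaining $N-1$ summands concentrated near their mean, and making these $N$ events disjoint to extract the factor $N$ --- is exactly the paper's argument. But there is a genuine error in how you justify the concentration step. You assert that $\va(\xi)<\infty$ ``because the tail index is $5/2>2$''; this confuses the exponent of the probability mass function with the exponent of the tail. Here $\p{\xi=k}\asymp k^{-5/2}$, so $\p{\xi>t}\asymp t^{-3/2}$ and $\E{\xi^2}=\sum_k k^2\p{\xi=k}\asymp\sum_k k^{-1/2}=\infty$: the variance is infinite, Chebyshev is unavailable, and a concentration window of width $O(\sqrt{N})$ is the wrong scale. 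The correct statement, which the paper uses, is that $\xi$ is in the domain of attraction of a $\tfrac32$-stable law, so $S_{N-1}-\nu(N-1)$ fluctuates on the scale $N^{2/3}$ and one gets $\p{|S_{N-1}-\nu N|<N^{2/3}}\ge c>0$ from the stable limit theorem. This is not cosmetic: with a $\sqrt{N}$-window the local-limit heuristic gives $\p{S_{N-1}\in\text{window}}=O(N^{1/2}/N^{2/3})\to 0$, and your argument would only yield $\delta N^{5/6}/m^{5/2}$.

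Two further points. First, the claim in your preliminary attempt that $\p{S_{N-1}\in[(1-\beta)m,\beta m]}\ge c_0$ uniformly is false in the regime $\nu N\ll m$: there $S_{N-1}\approx\nu N=o(m)$, so reaching level $(1-\beta)m$ itself requires a big jump and has probability $O(Nm^{-3/2})=o(1)$. You correctly abandon this, but the pivoted argument is then left as a sketch: you never specify the admissible window for $s_j$ (the paper takes $|m-\xi_j-\nu N|<N^{2/3}$), and the ``near-disjointness'' is deferred. The paper handles it cleanly by adding the condition $\max_{j\ne i}\xi_j\le\tfrac{(1-\lambda)m}{2}$ to each event $E_i$ --- which makes the $E_i$ exactly disjoint, since on $E_i$ one has $\xi_i>(1-\lambda)m-N^{2/3}>\tfrac{(1-\lambda)m}{2}$ --- and then subtracting the two-big-jump probability $N^2\cdot O(m^{-3/2})\cdot O(m^{-5/2})=O(N^2/m^4)$, which is $o(N/m^{5/2})$ because $N=O(m)$. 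These are precisely the ``delicate points'' you flag but do not resolve, so as written the proposal does not constitute a proof.
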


\begin{proof}
Fix large enough $N,m\in\N$ with $\lambda m\ge\nu N$ and $N^{2/3}<\frac{(1-\lambda)m}{2}$. For $1\le i\le N$, let $
E_i = \left\{ \left\vert m -\xi_i - \nu N\right\vert < N^{2/3},~\max\limits_{j=1,\ldots,i-1,i+1,\ldots, N} \xi_j \le \frac{(1-\lambda)m }{2}, ~S_{N}=m\right\}$. Since $\nu N\le  \lambda m$, if the event $E_i$ occurs, then $
\xi_i > m - \nu N - N^{2/3} \ge 
(1-\lambda) m - N^{2/3} > \frac{(1-\lambda)m}{2}\ge \max\limits_{j=1,\ldots,i-1,i+1,\ldots,N}\xi_j$. So the events $E_1,\ldots,E_{N}$ are disjoint. It follows by symmetry and independence that $\p{S_{N}=m}
\ge N \p{E_{N}}$, which is lower bounded by
\begin{align*}
&N \p{|m-\xi_N - \nu N| < N^{2/3},S_N=m}- N^2 \p{\xi_N > \frac{(1-\lambda)m}{2}, \xi_{N-1}>\frac{(1-\lambda)m}{2},S_N=m}\\
&\ge N \sum_{k=\lceil \nu N - N^{2/3}\rceil}^{\lfloor \nu N+N^{2/3}\rfloor} \p{S_{N-1}=k} \p{\xi=m-k} - N^2
\p{\xi> \frac{(1-\lambda)m}{2}} \sup_{\lceil \frac{(1-\lambda)m}{2}\rceil\le \ell \le m} \p{\xi=\ell}.
\end{align*}
So with $c = \inf \left\{\ell^{5/2}\p{\xi=\ell}: \ell \in\N\right\}>0$, for all $k$ in the above sum, $\p{\xi=m-k}\ge \frac{c}{m^{5/2}}$. Similarly, with $
d = \sup\left\{\ell^{5/2} \p{\xi = \ell}: \ell \in \N  \right\}<\infty$, $\p{\xi>\frac{(1-\lambda)m}{2}} \le \frac{2d} {3}\left(\frac{2}{(1-\lambda)m}\right)^{3/2}$, and for $\ell\ge \frac{(1-\lambda)m}{2}$ we have $\p{\xi = \ell}\le \frac{d}{ \ell^{5/2}}\le d \left(\frac{2}{(1-\lambda)m} \right)^{5/2}$. Altogether,
\begin{align*}
\p{S_{N}=m}\ge\frac{Nc }{m^{5/2} }\sum_{k=\lceil \nu N- N^{2/3}\rceil}^{\lfloor \nu N +N^{2/3}\rfloor} \p{S_{N-1}=k} - N^2  \frac{2d} {3}\left(\frac{2}{(1-\lambda)m}\right)^{3/2} d \left(\frac{2}{(1-\lambda)m} \right)^{5/2}.
\end{align*}
Since $\xi$ is in the domain of attraction of a $\frac{3}{2}$-stable random variable, the fluctuation of $S_{N-1}$ around its mean is of order $N^{2/3}$. By decreasing $c$ if necessary, we may assume that $\p{\left\vert S_{N-1} -\nu N\right\vert < N^{2/3}}\ge c$ and obtain that $\p{S_{N}=m}\ge \frac{c^2N}{m^{5/2}} - \frac{2d^2}{3}\left( \frac{2}{1-\lambda}\right)^4\frac{N^2}{m^4}\ge \frac{c^2N}{2m^{5/2}}$, the last inequality holding since for any $\veps>0$, we have $\frac{N^2}{m^4}<\frac{\veps N}{m^{5/2}}$ for large enough $N$ and for all $m$ permitted by the lemma.
\end{proof}

 \begin{proof}[{\bf Proof of \refP{allocationprop}}]
First, fix $k\in\N$, and recall that $S_{N(n)}^k = \sum_{i=1}^{N(n)}\xi_i^k$ where $\xi^k_i =\xi_i \I{\xi_i\le k}$. Considering which summand of $S_{N(n)}$ is largest leads to $\{\xi_{(1)}=k\}\cap \{S_{N(n)}=m(n) \} \subset \bigcup_{i=1}^{N(n)} \{\xi_i=k\}\cap \{S_{N(n)}^k - \xi_i^k=m(n)-k\}$. Recalling $\overline{w} = \sum_{\ell=1}^\infty w(\ell)$, by symmetry and independence it is easily seen that
\[
\p{\xi_{(1)}=k,S_{N(n)}=m(n)}\le N(n) \frac{1+\psi(k)}{k^{5/2}\overline{w}} \p{S_{N(n)-1}^k=m(n)-k}.
\]
Furthermore, since $\limsup_{n\to\infty}\frac{\nu N(n)}{m(n)}<1$, there exists $\lambda\in(0,1)$ with $\limsup_{n\to\infty}\frac{\nu N(n)}{m(n)}\le \lambda$. Then by \refL{lem:sum}, there exists $\delta=\delta(\lambda)>0$ such that for all $n$ large enough we have $\p{S_{N(n)}=m(n)}\ge \frac{\delta N(n)}{m(n)^{5/2}}$, so Bayes formula now gives
\[
\p{\xi_{(1)}=k~\big\vert~ S_{N(n)}=m(n)}\le  \frac{1+\psi(k)}{\delta \overline{w}} \left(\frac{m(n)}{k}\right)^{5/2}\p{S_{N(n)-1}^k=m(n)-k}.
\]
Note that if $S_{N(n)}=m(n)$ then $\xi_{(1)}\ge m(n)/N(n)$. So with $N(n)< (\ln n)^2$ we have $\p{\xi_{(1)}\le \frac{m(n)}{(\ln n)^2}~\Big\vert~S_{N(n)}=m(n)}=0$. It thus remains to consider the case $N(n)\ge (\ln n)^2$:
\begin{align}
&\p{\xi_{(1)}\le \frac{m(n)}{(\ln n)^2}~\Big\vert~S_{N(n)}=m(n)}
\le \sum_{k=\lfloor \frac{m(n)}{N(n)}\rfloor}^{\lfloor\frac{m(n)}{(\ln n)^2}\rfloor} \frac{1+\psi(k)}{\delta \overline{w}} \left(\frac{m(n)}{k}\right)^{5/2} \p{S_{N(n)-1}^k=m(n)-k}\notag\\
&\le \frac{N(n)^{5/2}}{\delta\overline{w}}\sup_{\lfloor\frac{m(n)}{N(n)}\rfloor\le k\le \lfloor\frac{m(n)}{(\ln n)^2}\rfloor} (1+\psi(k)) \p{S_{N(n)-1}^{\lfloor\frac{m(n)}{(\ln n)^2}\rfloor}\ge m(n)\left(1-\frac{1}{(\ln n)^2}\right)}.\label{in:sum}
\end{align}
Since $m(n)=n(1+o(1))$, we have $\frac{m(n)}{(\ln n)^2}\to\infty$. Therefore, by \refL{lem:chernoff}, 
\begin{align*}
&\p{S_{N(n)-1}^{\lfloor\frac{m(n)}{(\ln n)^2}\rfloor}\ge m(n) \left(1-\frac{1}{(\ln n)^2}\right)} 
\le \exp\left(- \frac{m(n)}{\lfloor \frac{m(n)}{(\ln n)^2}\rfloor} \left(1-\frac{1}{(\ln n)^2}\right) + \frac{\nu N(n)}{\lfloor\frac{m(n)}{(\ln n)^2}\rfloor}(1+o(1))\right).
\end{align*}
Since $\limsup_{n\to\infty}\frac{\nu N(n)}{m(n)}<1$, there is $\veps>0$ so that for large enough $n$, $- \frac{m(n)}{\lfloor\frac{m(n)}{(\ln n)^2}\rfloor} \left(1-\frac{1}{(\ln n)^2}\right) + \frac{\nu N(n)}{\lfloor\frac{m(n)}{(\ln n)^2}\rfloor}(1+o(1)) 
<
-\veps(\ln n)^2$. Hence, $\p{S_{N(n)-1}^{\lfloor\frac{m(n)}{(\ln n)^2}\rfloor}\ge m(n) \left(1-\frac{1}{(\ln n)^2}\right)}
< e^{-\veps(\ln n)^2}$. Combined with (\ref{in:sum}), $
 \p{\xi_{(1)}\le \frac{m(n)}{(\ln n)^2}~\Big\vert~S_{N(n)}=m(n)}=O\left( \frac{N(n)^{5/2}}{n^{\veps\ln n}}\right) = O\left(n^{-10}\right)$.
\end{proof}

 \begin{proof}[{\bf Proof of \refP{allocationprop2}}]
By symmetry and independence,
\begin{align*}
&\p{\xi_{(1)}> \frac{ m(n)}{(\ln n)^2},\xi_{(2)}>  r(n)^{5/6}~\Big\vert~ S_{N(n)}=m(n)} \\
\le&~N(n)^2 \p{\xi_{N(n)}>\frac{ m(n)}{(\ln n)^2},\xi_{N(n)-1}> r(n)^{5/6}~\Big\vert~ S_{N(n)}=m(n)}\\
= &~ N(n)^2 \sum_{i=\lfloor \frac{ m(n)}{(\ln n)^2}+1\rfloor}^{m(n)}\frac{\p{\xi_{N(n)}=i}\p{\xi_{N(n)-1}>r(n)^{5/6},
S_{N(n)-1}=m(n)-i}}{\p{S_{N(n)}=m(n)}}.
\end{align*}
Note that for $i\ge \frac{ m(n)}{(\ln n)^2}$, $\p{\xi=i}  = O\left(\left(\frac{ m(n)}{(\ln n)^2}\right)^{-5/2}\right)$. Furthermore, since $\xi$ is in the domain of attraction of a $\frac32$-stable random variable, we have $\p{\xi>r(n)^{5/6}} = O\left(r(n)^{-\frac32 \cdot \frac56}\right)=O\left(r(n)^{-5/4}\right)$. Together with \refL{lem:sum}, 
 \begin{align}
&\p{\xi_{(1)}> \frac{ m(n)}{(\ln n)^2},\xi_{(2)}>  r(n)^{5/6}~\Big\vert~ S_{N(n)}=m(n)}
= \frac{O\left( N(n)^2 \left(\frac{ m(n)}{(\ln n)^2}\right)^{-5/2} r(n)^{-5/4}\right)} {\p{S_{N(n)}=m(n)}}\notag\\
&= O\left( N(n)^2 \left(\frac{ m(n)}{(\ln n)^2}\right)^{-5/2} r(n)^{-5/4} N(n)^{-1}m(n)^{5/2}\right)
= O\left( N(n) \left(\ln n\right)^{5} r(n)^{-5/4} \right)\notag.\qedhere
\end{align}
\end{proof}

\section{The Number of Facial $2$-Cycles in the Pre-Root-Block}\label{app:number}

 This section shows that for $\rQ_n\in_u\cQ_{n,r}$ with appropriate $r$, we have $|\cF(\rQ_n)|< 3r$ with high probability, recalling that $|\cF(\cQ_n)|$ is the number of facial $2$-cycles in the pre-root-block of $\rQ_n$. Together with the assumptions that $r(n)>(\ln n)^{25}$ and $r(n)=o(n)$, this verifies that the conditions in \refC{quadregime1} hold with high probability, paving the way to proving condensation phenomena for $\rQ_n$ in \refS{sec:struc}. Recall that $\nu = \E{\xi}$.

\begin{prop}\label{prop:Nsize}
Fix $\lambda\in(0,1)$. There exists $c=c(\lambda)>0$ such that the following holds. For sufficiently large integers $r$ and $n$ with $ (n-r)\lambda\ge 2\nu r$, given $\rQ_n\in_u\cQ_{n,r}$, we have
\[
\p{|\cF(\rQ_n)|\ge 3 r} \le  c n^{5/2} \left(\frac{4}{9}\right)^{r}.
\]
\end{prop}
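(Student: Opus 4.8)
The plan is to evaluate the ratio
\[
\p{|\cF(\rQ_n)|\ge 3r}=\frac{\sum_{N\ge 3r}|\cQ_{n,r,N}|}{\sum_{N\ge 1}|\cQ_{n,r,N}|}
\]
directly from the enumeration in \refL{quadcount}. Arguing exactly as in the proof of \refP{cond}, and using the identity $\sum_{y\in\cB_{n-r,N}}\prod_{i=1}^N p(y_i)=\p{S_N=n-r}$ (valid because $\xi\ge 1$), for every $1\le N\le n-r$ one obtains the clean formula
\[
|\cQ_{n,r,N}|=|\cR_r|\binom{N+2r-4}{2r-4}c_0^{\,N}\,12^{\,n-r}\,\p{S_N=n-r},\qquad c_0:=\frac{\overline{w}}{3\sqrt{\pi}}.
\]
The factor $|\cR_r|\,12^{n-r}$ is common to numerator and denominator, so it suffices to bound $\sum_{N\ge 3r}|\cQ_{n,r,N}|$ from above and a single term of $\sum_{N\ge 1}|\cQ_{n,r,N}|$ from below. (If $3r>n-r$ there are no quadrangulations in $\cQ_{n,r}$ with $|\cF|\ge 3r$, so the statement is trivial; hence we may assume $3r\le n-r$, which together with ``$r$ large'' forces $n-r$ to be large.)

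First I would pin down $c_0$. By Tutte's bijection and Euler's formula, $|\cQ_{\ell+1}|$ equals the number of rooted planar maps with $\ell-1$ edges, namely $\tfrac{2\cdot 3^{\ell-1}}{\ell(\ell+1)}\binom{2\ell-2}{\ell-1}$; plugging this into (\ref{eq:quadsize}) and using $\binom{2\ell-2}{\ell-1}=\ell\,C_{\ell-1}$ with $C_j$ the $j$-th Catalan number gives $w(\ell)=\tfrac{\sqrt{\pi}\,C_{\ell-1}}{4^{\ell-1}(\ell+1)}$. Hence $\overline{w}=\sqrt{\pi}\sum_{j\ge 0}\tfrac{C_j}{4^j(j+2)}$, and integrating the Catalan generating function $\sum_j C_j x^j=\tfrac{1-\sqrt{1-4x}}{2x}$ and evaluating at $x=\tfrac14$ yields $\sum_{j\ge 0}\tfrac{C_j}{4^j(j+2)}=\tfrac23$, so $\overline{w}=\tfrac{2\sqrt{\pi}}{3}$ and $c_0=\tfrac29$. (The same computation gives $\nu=\E{\xi}=2$.) This explicit small value of $c_0$ is precisely what lets the otherwise crude estimates below succeed.

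For the numerator I would use only $\p{S_N=n-r}\le 1$ and extend the sum to all $N\ge 3r$. For $N\ge 3r$ the ratio of consecutive terms of $\sum_N\binom{N+2r-4}{2r-4}c_0^N$ is $\tfrac{N+2r-3}{N+1}c_0\le \tfrac53 c_0=\tfrac{10}{27}<1$, so the sum is at most $\tfrac{27}{17}\binom{5r-4}{2r-4}c_0^{3r}$; combining with $\binom{5r-4}{2r-4}\le\binom{5r}{2r}\le(3125/108)^r$ (the last step being the standard entropy bound) gives $\sum_{N\ge 3r}|\cQ_{n,r,N}|\le |\cR_r|\,12^{n-r}\,\tfrac{27}{17}\big(\tfrac{3125}{108}\cdot\tfrac{8}{729}\big)^r=|\cR_r|\,12^{n-r}\,\tfrac{27}{17}\,(25000/78732)^r$. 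For the denominator I would keep only the term $N=1$: since $\p{S_1=n-r}=p(n-r)=\tfrac{1+\psi(n-r)}{(n-r)^{5/2}\overline{w}}\ge \tfrac{3}{4\sqrt{\pi}\,(n-r)^{5/2}}$ once $n-r$ is large (because $\psi\to 0$),
\[
\sum_{N\ge 1}|\cQ_{n,r,N}|\ge |\cQ_{n,r,1}|=|\cR_r|\,12^{n-r}(2r-3)c_0\,p(n-r)\ge |\cR_r|\,12^{n-r}\,\frac{2r-3}{6\sqrt{\pi}\,(n-r)^{5/2}}.
\]
Dividing, and using $(n-r)^{5/2}\le n^{5/2}$ and $\tfrac{25000}{78732}<\tfrac49$, one obtains $\p{|\cF(\rQ_n)|\ge 3r}\le \tfrac{162\sqrt{\pi}}{17}\,n^{5/2}(4/9)^r$, which is the assertion (with $c=\tfrac{162\sqrt{\pi}}{17}$).

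The genuinely delicate part is concentrated in the first two steps: extracting the product formula from \refL{quadcount} in the normalized form above, and carrying the generating-function computation through to the exact constant $c_0=\tfrac29$. Once this is done, the combinatorial estimates are robust — indeed the true exponential rate one gets here is $\tfrac{25000}{78732}\approx 0.32$, comfortably below $\tfrac49\approx 0.44$, so there is room to spare and even the trivial bound $\p{S_N=n-r}\le 1$ suffices. A minor point to verify is that the one-term lower bound on the denominator is legitimate, which only requires $r$ large enough that $\cR_r\neq\emptyset$ and $n-r$ large enough for the $\psi$-estimate; both hold under the hypotheses.
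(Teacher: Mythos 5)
Your proof is correct, and while it shares the paper's starting point --- summing \refL{quadcount} over allocations to get the exact formula $|\cQ_{n,r,N}|=|\cR_r|\binom{N+2r-4}{N}\left(\tfrac{2}{9}\right)^N 12^{n-r}\,\p{S_N=n-r}$, with the constant $\tfrac29$ coming from $\overline{w}=\tfrac{2\sqrt{\pi}}{3}$ --- it diverges from the paper in how the ratio is estimated. The paper (via \refL{cor:2}) compares every term $N\ge 3r$ to the reference term $N=2r-4$, using the monotone ratio $\tfrac{N+2r-3}{N+1}\cdot\tfrac29\le\tfrac49$ to produce the rate $(4/9)^{N-2r+4}$, and then must lower-bound $\p{S_{2r-4}=n-r}$ by the local-limit estimate of \refL{lem:sum} (this is where the hypothesis $(n-r)\lambda\ge 2\nu r$ enters). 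You instead compare to the single term $N=1$, where $\p{S_1=n-r}=p(n-r)$ is explicit, so you avoid \refL{lem:sum} altogether; the price is that you discard $\p{S_N=n-r}\le 1$ in the numerator and must control $\binom{5r-4}{2r-4}(2/9)^{3r}$ by the entropy bound, landing at the rate $25000/78732\approx 0.32$, which happens to sit below $4/9$ --- so the constant $4/9$ in your bound is incidental rather than structural. Your route is self-contained (it even recomputes $\overline{w}$ from Tutte's formula and Catalan numbers rather than quoting $M(1/12)=\tfrac13$ from \cite{BFSS}) and uses the hypothesis $(n-r)\lambda\ge 2\nu r$ only to guarantee $n-r\to\infty$; the paper's route is shorter given that \refL{lem:sum} is needed elsewhere anyway. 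Both are valid, and your verification that the $N=1$ term is legitimate (the product formula holds for all $1\le N\le n-r$, not just $N\ge 2r-4$) is the one point that needed care.
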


We start by deriving a lemma about $|\cQ_{n,r,k}|$, recalling the definition of $\cQ_{n,r,k}$ from (\ref{eq:Qnrk}).

\begin{lem}\label{cor:2}
Fix $n,r,k\in\N$ with $1<r< n$ and $2r-4\le k\le n-r$. Then
\[
\left\vert \cQ_{n,r,k}\right\vert\le \left\vert \cQ_{n,r,2r-4}\right\vert\left(\frac{4}{9}\right)^{k-2r+4} \frac{\p{S_k=n-r}}{\p{S_{2r-4}=n-r}}.
\]
\end{lem}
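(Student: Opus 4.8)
The plan is to derive the claimed bound on $|\cQ_{n,r,k}|$ by combining the exact count of \refL{quadcount} with \refP{cond}, so that the combinatorial ratio becomes a ratio of random-walk probabilities. First I would use \refL{quadcount} (summed over all allocations $y\in\cB_{n-r,k}$) to write
\[
|\cQ_{n,r,k}| = |\cR_r|\binom{k+2r-4}{k}2^k \sum_{y\in\cB_{n-r,k}}\prod_{i=1}^k |\cQ_{y_i+1}|.
\]
The same identity with $k$ replaced by $2r-4$ gives an expression for $|\cQ_{n,r,2r-4}|$. Dividing, the factors $|\cR_r|$ cancel, and the ratio of binomials together with the ratio of the $2$-powers must be controlled. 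For the product-sum factor, recalling that $|\cQ_{y_i+1}| = \tfrac{2}{\sqrt{\pi}}\tfrac{12^{y_i-1}(1+\psi(y_i))}{y_i^{5/2}} = \tfrac{2}{\sqrt\pi}\,12^{y_i-1}\overline w\, p(y_i)$ and that $\sum_i (y_i-1) = n-r-k$, one gets $\sum_{y\in\cB_{n-r,k}}\prod_{i=1}^k|\cQ_{y_i+1}| = (\tfrac{2}{\sqrt\pi}\overline w)^k\, 12^{n-r-k}\,\p{S_k = n-r}$, using that $\p{S_k=n-r} = \sum_{y\in\cB_{n-r,k}}\prod_i p(y_i)$. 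Substituting both into the ratio, all the $n$-dependent and $\psi$-dependent pieces collapse into the single factor $\p{S_k=n-r}/\p{S_{2r-4}=n-r}$, and what is left is
\[
\frac{|\cQ_{n,r,k}|}{|\cQ_{n,r,2r-4}|} = \frac{\binom{k+2r-4}{k}}{\binom{4r-8}{2r-4}}\cdot 2^{k-2r+4}\cdot\Big(\tfrac{2}{\sqrt\pi}\overline w\Big)^{k-2r+4}\cdot 12^{-(k-2r+4)}\cdot\frac{\p{S_k=n-r}}{\p{S_{2r-4}=n-r}}.
\]

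The remaining work is purely to check that the prefactor is at most $(4/9)^{k-2r+4}$. I would bound the ratio of binomial coefficients: since $\binom{k+2r-4}{k} = \prod_{j=1}^{2r-4}\frac{k+j}{j}$ and similarly $\binom{4r-8}{2r-4} = \prod_{j=1}^{2r-4}\frac{2r-4+j}{j}$, the ratio is $\prod_{j=1}^{2r-4}\frac{k+j}{2r-4+j}$, which for $k\ge 2r-4$ is at most $\big(\tfrac{k+1}{2r-3}\big)^{2r-4}$ — but a cleaner route is to just bound $\binom{k+2r-4}{k}\le 2^{k+2r-4}$ and $\binom{4r-8}{2r-4}\ge 1$ (or a slightly sharper elementary bound if needed), and to absorb these into the geometric factor. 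The quantitative claim is that $2\cdot\tfrac{2\overline w}{\sqrt\pi}\cdot\tfrac{1}{12}\cdot(\text{binomial contribution per unit of }k-2r+4)\le \tfrac49$; this is exactly the kind of numerical inequality that the constants in the paper (the factor $4/9$ and later $4/9$ vs.\ the $12$ from $|\cQ_k|$) are designed to make work, and I expect it to follow from the known numerical value of $\overline w$ together with the definition of $|\cQ_k|$. I would state it as an elementary computation.

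The main obstacle — and the only non-bookkeeping point — is the bound on the ratio of binomial coefficients $\binom{k+2r-4}{k}/\binom{4r-8}{2r-4}$, since naively it grows faster than geometrically in $k$ when $k$ is much larger than $r$. The resolution is that in the regime where this lemma is ultimately applied ($k \le 3r$, or more generally $k = O(r)$), the ratio $\prod_{j=1}^{2r-4}\frac{k+j}{2r-4+j}$ really is at most an exponential in $k-2r+4$ with a base small enough to be absorbed, because each factor $\frac{k+j}{2r-4+j}$ is bounded by a constant. If the lemma as stated is meant to hold for all $k\le n-r$, then the honest statement must use the sharper bound $\binom{k+2r-4}{k}\le \binom{k+2r-4}{2r-4}\le \big(\tfrac{e(k+2r-4)}{2r-4}\big)^{2r-4}$ and pair it against the decay of $\p{S_k=n-r}$ in $k$ (via \refL{lem:chernoff}-type estimates), but given that \refL{cor:2} is invoked only for $k$ up to a constant multiple of $r$ in \refP{prop:Nsize}, I would simply restrict attention to that regime and carry out the elementary geometric bound there, noting where the constant $4/9$ comes from.
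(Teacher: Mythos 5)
Your overall skeleton matches the paper's proof exactly: sum \refL{quadcount} over allocations, use $|\cQ_{y_i+1}|=\frac{2}{\sqrt\pi}12^{y_i-1}\overline w\,p(y_i)$ to convert $\sum_{y\in\cB_{n-r,k}}\prod_i|\cQ_{y_i+1}|$ into $\left(\frac{2\overline w}{\sqrt\pi}\right)^k 12^{n-r-k}\,\p{S_k=n-r}$, and reduce the lemma to showing that the prefactor is at most $(4/9)^{k-2r+4}$. With $\overline w=\frac{2\sqrt\pi}{3}$ (which the paper extracts from $M(1/12)=1/3$, so that $\frac{2\overline w}{\sqrt\pi}=\frac43$ and the per-unit constant is $2\cdot\frac43\cdot\frac1{12}=\frac29$), the whole lemma comes down to the single inequality $\binom{k+2r-4}{k}/\binom{4r-8}{2r-4}\le 2^{k-2r+4}$. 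This is where your proposal does not close.

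The bounds you offer for that ratio either fail or are too weak. Bounding $\binom{k+2r-4}{k}\le 2^{k+2r-4}$ against $\binom{4r-8}{2r-4}\ge 1$ leaves an uncancelled factor $2^{4r-8}=4^{2r-4}$, which cannot be absorbed into $(4/9)^{k-2r+4}$. Your sharper route, $\prod_{j=1}^{2r-4}\frac{k+j}{2r-4+j}\le\left(\frac{k+1}{2r-3}\right)^{2r-4}\le e^{k-2r+4}$, gives base $e$ rather than base $2$, and $\frac29\cdot e>\frac49$, so the stated constant is not recovered (though the application in \refP{prop:Nsize} would survive with any base below $\frac92$). More importantly, your diagnosis of the difficulty is backwards: the ratio does \emph{not} grow super-geometrically for $k\gg r$, and no restriction to $k=O(r)$ is needed. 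Telescoping in $k$ rather than expanding the product over $j$, one has $\binom{k+1+2r-4}{k+1}=\frac{k+2r-3}{k+1}\binom{k+2r-4}{k}$, and the factor $\frac{k+2r-3}{k+1}=1+\frac{2r-4}{k+1}$ is \emph{decreasing} in $k$ and equals at most $\frac{4r-7}{2r-3}\le 2$ at $k=2r-4$. Hence $\binom{k+2r-4}{k}/\binom{4r-8}{2r-4}\le 2^{k-2r+4}$ for \emph{all} $k\ge 2r-4$, which is the paper's one-line argument and yields the lemma with the constant $\frac29\cdot 2=\frac49$ exactly as stated, with no restriction on $k$.
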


\begin{proof}
First, let $M(z)$ be the generating function of rooted quadrangulations with $z$ marking the number of faces (i.e. the number of vertices minus two). That is, $M(z) = \sum_{\ell=1}^\infty |\cQ_{\ell+2}| z^\ell$.
Note that $M\left(\frac{1}{12}\right) = \frac13$ by \citep[Proposition 4]{BFSS}. Furthermore, by (\ref{eq:quadsize}) we have $|\cQ_{\ell+2}| = \frac{2}{\sqrt{\pi}} \frac{12^{\ell} (1+\psi(\ell+1))}{(\ell+1)^{5/2}}$ for $\ell\in\N$, and we take $|\cQ_2|=1$ since we view a single edge as a quadrangulation. Thus, $\sum_{\ell=1}^\infty \frac{2}{\sqrt{\pi}} \frac{1+\psi(\ell)}{\ell^{5/2}} = \sum_{\ell=0}^\infty |\cQ_{\ell+2}|\frac{1}{12^\ell} = 1+M\left(\frac{1}{12}\right) = \frac43$, and so $\overline{w}\coloneqq \sum_{\ell=1}^\infty \frac{1+\psi(\ell)}{\ell^{5/2}}= \frac{2\sqrt{\pi}}{3}$. This yields that, for $i\in\N$, $\p{\xi = i}= \frac{\frac{1+\psi(i)}{i^{5/2}}}{\overline{w}} = \frac{3}{2\sqrt{\pi}} \frac{1+\psi(i)}{i^{5/2}}$, recalling the distribution of $\xi$ from (\ref{pdef}). Together with (\ref{eq:quadsize}), it follows that $\sum\limits_{y_1+\ldots+y_k=n-r} \prod_{i=1}^k |\cQ_{y_i+1}|
= \left(\frac{2}{\sqrt{\pi}}\right)^k 12^{n-r-k} \left(\frac{2\sqrt{\pi}}{3}\right)^k \p{S_k = n-r}$, where $S_k = \sum_{i=1}^k \xi_i$ and $\xi_1,\ldots,\xi_k$ are iid. Moreover, by summing over sequences of $(y_1,\ldots,y_k)\in \cB_{n-r,k}$ with $y_1+\ldots +y_k = n-r$, \refL{quadcount} gives $\left\vert \cQ_{n,r,k}\right\vert = |\cR_r|  {2r-4+k\choose k} 2^k  \sum\limits_{y_1+\ldots+y_k=n-r} \prod_{i=1}^k |\cQ_{y_i+1}|$. Combining these equalities, we obtain $
|\cQ_{n,r,k}| = |\cR_r|{2r-4+k\choose k} \p{S_k=n-r}\left(\frac{2}{9}\right)^k 12^{n-r}$. Finally, note that for $a\in\N$, ${a+k+1\choose k+1} = \frac{a+k+1}{k+1}{a+k\choose k}$. Since $\frac{a+k}{k}$ decreases in $k$, for $k\ge 2r-4$, we easily get $\frac{\left\vert \cQ_{n,r,k}\right\vert}{\left\vert \cQ_{n,r,2r-4}\right\vert } 
\le \left(\frac{2r-4+2r-4}{2r-4} \frac{2}{9} \right)^{k-2r+4}  \frac{\p{S_k=n-r}}{\p{S_{2r-4}=n-r}}$.
\end{proof}

\begin{proof}[{\bf Proof of \refP{prop:Nsize}}]
Fix large enough integers $n$ and $r$ with $(n-r)\lambda\ge 2\nu r$. By \refL{lem:sum}, there exists $\delta=\delta(\lambda)>0$ with $\p{S_{2r-4}=n-r} \ge  \frac{(2r-4)\delta}{ n^{5/2}}$. The bound in \refL{cor:2} then gives that, for $2r-4\le k\le n-r$, $\left\vert \cQ_{n,r,k}\right\vert \le 
\left\vert \cQ_{n,r,2r-4}\right\vert  \left(\frac{4}{9}\right)^{k-2r+4}\frac{n^{5/2}}{(2r-4)\delta}$. It follows that, for $\rQ_n\in_u \cQ_{n,r}$, $
\p{|\cF(\rQ_n)| \ge 3r}
\le \sum_{3r\le k\le n-r} \frac{\left\vert \cQ_{n,r,k}\right\vert }{\left\vert \cQ_{n,r,2r-4}\right\vert}
\le \frac{9}{5\delta}\left(\frac49\right)^r n^{5/2}$.
\end{proof}

\section{Condensation in Quadrangulation Conditioned on Root Block Size}\label{sec:struc}

In this section, we show that for $\rQ_n\in_u\cQ_{n,r(n)}$, with high probability, there is condensation in $\rQ_n$ (see \cite{A} for an overview of condensation in random maps), and the root block $R(\rQ_n)$ does not separate two large submaps; see Corollaries \ref{Rcor} and \ref{Rcor2}. The corollaries are immediate consequences of \refP{prop:Nsize}, (\ref{eq:lar}), and (\ref{eq:sec}). Their proofs are similar, so we only present the latter one.

Recall from (\ref{Ydef}) that $(Y_i(\rQ_n)+1:1\le i\le |\cF(\rQ_n)|)$ are the sizes of submaps pendant to $R(\rQ_n)$, and write them as $Y_{(1)}(\rQ_n),Y_{(2)}(\rQ_n),\ldots,Y_{(|\cF(\rQ_n)|)}(\rQ_n)$ in decreasing order.

\begin{cor}\label{Rcor}
For $\rQ_n\in_u \cQ_{n,r(n)}$, $\p{ Y_{(1)}(\rQ_n)\le \frac{m(n)}{(\ln n)^2}} = o(1)$.
\end{cor}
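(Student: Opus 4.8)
The plan is to condition on the number of facial $2$-cycles $|\cF(\rQ_n)|$ in the pre-root-block and apply the estimate (\ref{eq:lar}) on each level set. Since $\cQ_{n,r(n)}=\bigsqcup_{k\ge 1}\cQ_{n,r(n),k}$ and a uniform element of $\cQ_{n,r(n)}$ conditioned on $\{|\cF(\rQ_n)|=k\}$ is uniform in $\cQ_{n,r(n),k}$, I would write
\[
\p{Y_{(1)}(\rQ_n)\le \tfrac{m(n)}{(\ln n)^2}}
=\sum_{k\ge 1}\p{|\cF(\rQ_n)|=k}\,\p{Y_{(1)}(\rQ_n)\le \tfrac{m(n)}{(\ln n)^2}\,\big|\,|\cF(\rQ_n)|=k},
\]
and split the sum at $k=3r(n)$.

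For the tail $k\ge 3r(n)$, I would invoke \refP{prop:Nsize}: fixing any $\lambda\in(0,1)$, the hypothesis $(n-r(n))\lambda\ge 2\nu r(n)$ holds for all large $n$ because $r(n)=o(n)$, so $\p{|\cF(\rQ_n)|\ge 3r(n)}\le c\,n^{5/2}(4/9)^{r(n)}$; since $r(n)>(\ln n)^{25}$ this is $o(1)$ (indeed $o(n^{-10})$), so the contribution of $k\ge 3r(n)$ to the sum is negligible. For the head $k<3r(n)$, I would use (\ref{eq:lar}) via \refC{quadregime1}: for every such $k$ one has $\nu k/m(n)\le 3\nu r(n)/m(n)\to 0$, so the hypothesis $\limsup_n \nu N(n)/m(n)<1$ of \refC{quadregime1} (equivalently \refP{allocationprop}) is met, and one may take the bounding constant $\lambda=1/2$ throughout. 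Tracing the proof of \refP{allocationprop}, the resulting estimate $O\big(N(n)^{5/2}/n^{\veps\ln n}\big)$ has $\veps$ and the implied constant depending only on $\lambda$, while $N(n)^{5/2}\le n^{5/2}$; hence there is a single sequence $g(n)=o(1)$ with $\p{Y_{(1)}(\rQ'_n)\le m(n)/(\ln n)^2}\le g(n)$ for every $\rQ'_n\in_u\cQ_{n,r(n),k}$ with $1\le k<3r(n)$ and all large $n$. Plugging this into the displayed sum and using $\sum_k\p{|\cF(\rQ_n)|=k}\le 1$ gives $\p{Y_{(1)}(\rQ_n)\le m(n)/(\ln n)^2}\le g(n)+o(1)=o(1)$.

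The only point requiring care is the uniformity used above: one must check that the bound furnished by \refP{allocationprop} is a single $o(1)$ sequence valid simultaneously for all $k<3r(n)$, not merely an asymptotic statement about one prescribed sequence $N(n)$. This is where I expect the (modest) work to lie, but it is routine: every constant appearing in that proof — the $\delta$ from \refL{lem:sum}, the $\veps$ in the closing Chernoff estimate — depends only on an upper bound for $\limsup_n \nu N(n)/m(n)$, and here that $\limsup$ equals $0$ no matter how $k$ is chosen below $3r(n)$, so $\lambda=1/2$ serves uniformly; the remaining $o(1)$ factors (e.g.\ $m(n)=n(1+o(1))$ and $\lfloor m(n)/(\ln n)^2\rfloor\to\infty$) do not involve $k$ at all. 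Everything else is bookkeeping, consistent with the claim in the text that this corollary is an immediate consequence of \refP{prop:Nsize} and (\ref{eq:lar}).
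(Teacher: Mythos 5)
Your proof is correct and follows essentially the same route as the paper, which (in the proof of Corollary~\ref{Rcor2}, said to be analogous) splits on the event $\{|\cF(\rQ_n)|<3r(n)\}$, bounds its complement via Proposition~\ref{prop:Nsize}, and applies the conditional bound from Corollary~\ref{quadregime1}. Your explicit check that the bound of Proposition~\ref{allocationprop} is uniform over all $k<3r(n)$ (with the degenerate case $k<(\ln n)^2$ contributing zero) is a point the paper leaves implicit, and it is handled correctly.
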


\begin{cor}\label{Rcor2}
For $\rQ_n\in_u \cQ_{n,r(n)}$, $\p{Y_{(2)}(\rQ_n)>  r(n)^{5/6}} =  o(1)$.
\end{cor}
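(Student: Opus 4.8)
The plan is to deduce Corollary~\ref{Rcor2} by conditioning on the value of $|\cF(\rQ_n)|$ and combining \refP{prop:Nsize} with the conditional bound (\ref{eq:sec}) of \refC{quadregime1}. First I would observe that conditioning a uniform element of $\cQ_{n,r(n)}$ on $|\cF(\rQ_n)| = k$ yields a uniform element of $\cQ_{n,r(n),k}$, so that for any target event $E$ depending only on the allocation $(Y_i(\rQ_n))$,
\[
\p{E} = \sum_{1\le k\le m(n)} \p{|\cF(\rQ_n)| = k}\cdot \psub{\rQ'\in_u\cQ_{n,r(n),k}}{E}.
\]
Taking $E = \{Y_{(2)}(\rQ_n) > r(n)^{5/6}\}$, I would split the sum at $k = 3r(n)$. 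For the tail $k\ge 3r(n)$, \refP{prop:Nsize} (applicable since $r(n)=o(n)$ forces $(n-r(n))\lambda \ge 2\nu r(n)$ eventually, for any fixed $\lambda\in(0,1)$) gives $\p{|\cF(\rQ_n)|\ge 3r(n)} \le c\, n^{5/2}(4/9)^{r(n)}$, and the assumption $r(n) > (\ln n)^{25}$ makes $n^{5/2}(4/9)^{r(n)} \to 0$ (indeed super-polynomially fast), so this part contributes $o(1)$ regardless of the conditional probabilities.

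For the main range $1\le k < 3r(n)$, I would take $N(n) := |\cF(\rQ_n)|$ on this event and check that the hypothesis $\limsup_n \frac{\nu N(n)}{m(n)} < 1$ of \refC{quadregime1} holds uniformly: since $N(n) < 3r(n) = o(n) = o(m(n))$, the ratio $\frac{\nu N(n)}{m(n)}$ tends to $0$, well below $1$. Hence (\ref{eq:sec}) applies and gives, for each such $k$,
\[
\psub{\rQ'\in_u\cQ_{n,r(n),k}}{Y_{(2)}(\rQ') > r(n)^{5/6}} = O\!\left(k\,(\ln n)^5 r(n)^{-5/4}\right) = O\!\left((\ln n)^5 r(n)^{-1/4}\right),
\]
using $k < 3r(n)$ in the last step; crucially the implied constant in (\ref{eq:sec}) is uniform over $N(n)$ with $\limsup \nu N(n)/m(n)$ bounded away from $1$, which is visible from the proof of \refP{allocationprop2}. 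Since $r(n) > (\ln n)^{25}$, we have $(\ln n)^5 r(n)^{-1/4} < (\ln n)^5 (\ln n)^{-25/4} = (\ln n)^{-5/4} \to 0$. Summing against the probabilities $\p{|\cF(\rQ_n)|=k}$ (which total at most $1$) keeps this an $o(1)$ bound, and combining with the tail estimate gives $\p{Y_{(2)}(\rQ_n) > r(n)^{5/6}} = o(1)$.

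The main obstacle I anticipate is bookkeeping the uniformity of the constants: I must make sure that the $O(\cdot)$ in \refP{allocationprop2} (and hence in (\ref{eq:sec})) is genuinely uniform over the family of functions $N(\cdot)$ with $N(n) < 3r(n)$, rather than depending on a particular choice of $N$. This is really a matter of re-reading the proofs of Lemmas~\ref{lem:chernoff} and~\ref{lem:sum} and of \refP{allocationprop2} to confirm that every estimate there depends on $N(n), m(n)$ only through the quantities $\nu N(n)/m(n)$ (bounded away from $1$) and the growth of $m(n)/(\ln n)^2 \to\infty$ and $r(n)\to\infty$ — all of which hold here — so that a single constant works simultaneously for all admissible $k < 3r(n)$. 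Once that uniformity is in hand, the rest is the routine two-regime split described above, exactly parallel to (and slightly simpler than) the argument for \refC{Rcor}, which is why the paper omits it.
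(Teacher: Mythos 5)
Your proposal is correct and follows essentially the same route as the paper: split on the event $\{|\cF(\rQ_n)|\ge 3r(n)\}$, bound its probability by \refP{prop:Nsize} using $r(n)>(\ln n)^{25}$, and on the complement apply (\ref{eq:sec}) with $N(n)<3r(n)$ to get $O\left(r(n)^{-1/4}(\ln n)^5\right)=o(1)$. Your extra attention to the uniformity of the implied constants over $k<3r(n)$ is a reasonable point of care, but the argument is the same as the paper's.
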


\begin{proof}
Fix $\rQ_n\in_u\cQ_{n,r(n)}$. We have
\[
\p{Y_{(2)}(\rQ_n)> r(n)^{5/6}} 
\le
 \p{Y_{(2)}(\rQ_n)>r(n)^{5/6}~\Big\vert~|\cF(\rQ_n)|<3r(n)} + \p{|\cF(\rQ_n)|\ge 3 r(n)}.
\]
Recalling that $r(n)>(\ln n)^{25}$ for all $n$ and $r(n) =o(n)$, \refP{prop:Nsize} yields that $\p{|\cF(\rQ_n)|\ge 3r(n)}=o(1)$. Furthermore, it follows from (\ref{eq:sec}) that
\[
 \p{Y_{(2)}(\rQ_n)>r(n)^{5/6}~\Big\vert~|\cF(\rQ_n)|<3r(n)}
= O\left(r(n)^{-1/4}(\ln n)^{5} \right)=o(1).\qedhere
\]
\end{proof}

Next, for $\rQ_n\in_u\cQ_{n,r(n)}$, we derive a tail bound for the maximal diameter of non-largest submap pendant to the root block. The derivation relies on \cite[Proposition 4]{CS}.

\begin{prop}{\em (\citet{CS}).}\label{diamlem}
Fix $n\in\N$ and let $\rQ_n\in_u\cQ_n$. There exist positive constants $x_0$, $c_1$, and $c_2$ such that for all $x\ge x_0$, $
\p{\diam\left(\rQ_n\right) > x n^{1/4}}\le c_1 e^{-c_2 x}$.
\end{prop}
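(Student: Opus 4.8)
The plan is to pass through the Cori--Vauquelin--Schaeffer bijection and reduce the statement to a tail bound for the oscillation of the label function of a uniform labelled tree. Recall that a uniform rooted quadrangulation $\rQ_n$ with $n$ faces is encoded by a uniformly random labelled plane tree $(\tau_n,\ell_n)$ with $n$ edges, which one may realise by taking $\tau_n$ uniform among plane trees with $n$ edges, attaching independent increments $(\varepsilon_e)_{e\in e(\tau_n)}$ uniform on $\{-1,0,1\}$, and setting $\ell_n(v)=\sum_{e\in[\![o_n,v]\!]}\varepsilon_e$, where $o_n$ is the root of $\tau_n$. Writing $\partial$ for the extra vertex produced by the bijection, one has $d_{\rQ_n}(\partial,v)=\ell_n(v)-\min_w\ell_n(w)+1$ together with the ``cactus'' upper bound $d_{\rQ_n}(u,v)\le \ell_n(u)+\ell_n(v)-2\min_w\ell_n(w)+2$ for all vertices $u,v$; since only an \emph{upper} bound on distances is needed here, this gives $\diam(\rQ_n)\le 2\,\mathrm{osc}(\ell_n)+2$ with $\mathrm{osc}(\ell_n)\coloneqq\max_w\ell_n(w)-\min_w\ell_n(w)\le 2\max_w|\ell_n(w)|$. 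It therefore suffices to prove, uniformly in $n$ and $s\ge0$, a bound of the form $\p{\max_{w}|\ell_n(w)|\ge s}\le c\exp\!\big(-s^{4/3}/(cn^{1/3})\big)$: taking $s$ of order $xn^{1/4}$ turns the exponent into a constant times $x^{4/3}\ge x$ for $x\ge x_0$, so the proposition follows with room to spare (and, incidentally, with the stronger stretched-exponential tail $e^{-cx^{4/3}}$).

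For the label tail I would run a two-scale argument reflecting that $\tau_n$ has height of order $n^{1/2}$ while labels along a root-to-vertex path fluctuate like the square root of that path's length. The contour of $\tau_n$ is a uniform Dyck path of length $2n$, so standard estimates for the height of a uniform plane tree give $\p{\mathrm{ht}(\tau_n)\ge h}\le C\exp(-ch^2/n)$. Conditionally on $\tau_n$, the label $\ell_n(w)$ is a sum of $h(w)$ i.i.d.\ centred $\{-1,0,1\}$ increments, and the whole family $(\ell_n(w))_w$ is read off the ``label contour'' process $V=(V_k)_{0\le k\le 2n}$ --- a fresh increment is pushed onto a stack at each contour up-step and popped at the matching down-step, $V_k$ being the current stack sum --- so that $\max_w|\ell_n(w)|=\max_k|V_k|$. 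One then bounds $\p{\max_k|V_k|\ge s}$ by splitting on $\{\mathrm{ht}(\tau_n)<h\}$ and its complement, controlling $\max_k|V_k|$ on the former event by an estimate that genuinely uses the conditional increment structure, and optimising the cutoff $h$; balancing $\exp(-ch^2/n)$ against a conditional label bound of the shape $\exp(-cs^2/h)$ forces $h\asymp s^{2/3}n^{1/3}$ and yields the rate $\exp(-cs^{4/3}n^{-1/3})$ above. (An alternative, which is essentially the route of \cite{CS}: use the closed-form count of well-labelled trees with all labels confined to a window $\{1,\dots,k\}$, and extract $\p{\max_w\ell_n(w)\ge k}$ by singularity/saddle-point analysis.)

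The crux --- and the only genuinely delicate point --- is the conditional estimate $\p{\max_k|V_k|\ge s\,\big|\,\mathrm{ht}(\tau_n)<h}$. A union bound over the $\Theta(n)$ vertices, each supplying a Hoeffding factor $\exp(-s^2/(2h))$, is hopelessly lossy: labels of order $n^{1/4}$ are \emph{typical}, so $\E{\#\{w:|\ell_n(w)|\ge s\}}$ is large and no first-moment argument on this count can tend to $0$ for fixed $x$. Moreover $V$ is not a submartingale --- its down-steps are forced by earlier up-steps through the stack --- and its total quadratic variation is of order $n$, not $n^{1/2}$, so treating $V$ as a generic one-dimensional process only yields $\sqrt n$-scale fluctuations. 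What saves the day is that $\tau_n$ is, with high probability, ``thin'': two vertices near height $n^{1/2}$ typically share an ancestral line of length of order $n^{1/2}$, so the label field $(\ell_n(w))_w$ is strongly correlated and behaves, at scale $n^{1/4}$, like the head of the Brownian snake, whose range has an $\exp(-cu^{4/3})$ tail. I would exploit this either by a generic-chaining estimate for $\max_w|\ell_n(w)|$ conditionally on $\tau_n$ (using the metric $\sqrt{d_{\tau_n}}$, for which $\ell_n$ is a sub-Gaussian field, together with volume-growth control of $\tau_n$), followed by a martingale-along-exploration concentration with variance proxy $\mathrm{ht}(\tau_n)$, or --- more economically --- by importing the Brownian-snake tail and transferring it via the invariance principle for $(\tau_n,\ell_n)$. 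This step is where all the real work lies; it is precisely the content supplied by \cite[Proposition 4]{CS}.
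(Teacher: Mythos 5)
The paper offers no proof of this proposition at all: it is quoted directly from \citet{CS} (their Proposition~4), so there is no internal argument to measure your write-up against. Read in that light, your first paragraph is a correct and standard reduction: via the Cori--Vauquelin--Schaeffer bijection (with the harmless reindexing between $n$ vertices and $n-2$ faces), the identity $d_{\rQ_n}(\partial,v)=\ell_n(v)-\min\ell_n+1$ and the triangle inequality through $\partial$ give $\diam(\rQ_n)\le 2(\max\ell_n-\min\ell_n)+2$, and a label tail of the form $c\exp\left(-s^{4/3}/(c\,n^{1/3})\right)$ does yield the stated bound (indeed the stronger $e^{-c_2x^{4/3}}$ form) uniformly in $n$. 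You are also right about where the difficulty sits: the per-vertex Hoeffding bound combined with a union bound leaves a prefactor of order $n$, i.e.\ a bound $Cn\,e^{-cx^{4/3}}$, which is useless for fixed $x$ uniformly in $n$, so some exploitation of the correlation structure of the label field is unavoidable. That diagnosis matches why the estimate is a genuine theorem of \citet{CS} rather than a routine concentration exercise.

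Two caveats on the part of your sketch that goes beyond the reduction. First, ``importing the Brownian-snake tail and transferring it via the invariance principle'' cannot work as stated: distributional convergence transfers no non-asymptotic, uniform-in-$n$ tail bound, so that alternative should be dropped. Second, the chaining route is not self-contained either --- for a worst-case tree with $n$ vertices the entropy integral in the metric $\sqrt{d_{\tau_n}}$ still carries a $\log n$, so the ``volume-growth control'' you invoke is itself a substantive probabilistic input that would need proof. Since you ultimately defer the conditional label-tail estimate to \citet{CS} --- the very source the paper cites for the whole proposition --- your proposal is best described as a correct reduction of the diameter statement to the cited radius/label bound, not an independent proof; as such it is consistent with, and slightly more informative than, what the paper does.
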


Combining Corollary \ref{Rcor2} and \refP{diamlem}, we obtain the following corollary easily.

Given $\rQ\in\cQ$, recall that $(\rP_i(\rQ):1\le i\le |\cF(\rQ)|)$ is the sequence of submaps pendant to $R(\rQ)$. Write them in the decreasing order of size as $(\rP_{(i)}(\rQ):1\le i\le |\cF(\rQ)|)$.

\begin{cor}\label{diam}
For $\rQ_n\in_u \cQ_{n,r(n)},x>0$, $\p{\max\limits_{i\ge 2}\diam(\rP_{(i)}(\rQ_n)) \ge  x r(n)^{1/4} }=o(1)$.
\end{cor}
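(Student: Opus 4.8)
The plan is to derive the bound from three facts already in hand: the second-largest pendant submap has at most $r(n)^{5/6}$ vertices with high probability (\refC{Rcor2}); the pre-root-block contains fewer than $3r(n)$ facial $2$-cycles with high probability, i.e.\ $\p{|\cF(\rQ_n)|\ge 3r(n)}=o(1)$, as established in the proof of \refC{Rcor2} via \refP{prop:Nsize} (whose hypothesis $(n-r(n))\lambda\ge 2\nu r(n)$ holds for large $n$ since $r(n)=o(n)$); and the quartic diameter tail bound \refP{diamlem}. Fixing $x>0$, set $A_n=\{Y_{(2)}(\rQ_n)\le r(n)^{5/6}\}\cap\{|\cF(\rQ_n)|<3r(n)\}$, so that $\p{A_n^c}=o(1)$, and it suffices to bound $\p{\{\max_{i\ge 2}\diam(\rP_{(i)}(\rQ_n))\ge x r(n)^{1/4}\}\cap A_n}$.

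Since $A_n$ is a function of the allocation $(Y_i(\rQ_n):1\le i\le|\cF(\rQ_n)|)$, I would condition on that allocation. The bijective construction in the proof of \refL{quadcount} shows that, conditionally on the allocation, the pendant submaps $(\rP_i(\rQ_n))$ are independent, with $\rP_i(\rQ_n)$ obtained from a uniform element of $\cQ_{Y_i(\rQ_n)+1}$ by adjoining one edge; adjoining an edge cannot increase graph distances, so $\diam(\rP_i(\rQ_n))\le \diam(Q)$ for a coupled $Q\in_u\cQ_{Y_i(\rQ_n)+1}$. On $A_n$ there are at most $3r(n)$ pendant submaps, and every one except the largest has at most $r(n)^{5/6}+1\le 2r(n)^{5/6}$ vertices. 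For $Q\in_u\cQ_m$ with $2\le m\le 2r(n)^{5/6}$, put $x'=x r(n)^{1/4}m^{-1/4}$; then $x'\ge x\,2^{-1/4}r(n)^{1/4-5/24}=x\,2^{-1/4}r(n)^{1/24}$, which exceeds $x_0$ once $n$ is large because $r(n)\to\infty$, so \refP{diamlem} gives $\p{\diam(Q)\ge x r(n)^{1/4}}=\p{\diam(Q)\ge x'm^{1/4}}\le c_1 e^{-c_2 x'}\le c_1\exp(-c_2 x\,2^{-1/4}r(n)^{1/24})$. A union bound over the at most $3r(n)$ non-largest submaps then bounds the conditional probability on $A_n$ by $3r(n)\,c_1\exp(-c_2 x\,2^{-1/4}r(n)^{1/24})$; taking expectations and adding $\p{A_n^c}$, we finish by observing that $r(n)>(\ln n)^{25}$ forces $r(n)^{1/24}>(\ln n)^{25/24}$, which grows faster than $\ln n$, so (using $r(n)\le n$) $3r(n)\exp(-c_2 x\,2^{-1/4}r(n)^{1/24})\to 0$.

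The one step that needs genuine care is the structural assertion that, conditionally on the allocation, the pendant submaps are independent uniform quadrangulations (each with a single extra edge): this is exactly what the construction in the proof of \refL{quadcount} provides, since the count $\Lambda_{n,r,N}^y=|\cR_r|\binom{N+2r-4}{N}2^N\prod_{i=1}^N|\cQ_{y_i+1}|$ factorises with the product running over \emph{all} choices of the submaps $\rQ_i$ while the remaining factors do not depend on those choices. Everything else — dominating the in-map diameter of a submap by its stand-alone diameter, the rescaling check that $x'\gtrsim x\,r(n)^{1/24}\to\infty$, and the union bound — is routine, and the stretched-exponential decay $\exp(-c\,r(n)^{1/24})$ guaranteed by $r(n)>(\ln n)^{25}$ comfortably absorbs the polynomial factor $3r(n)$.
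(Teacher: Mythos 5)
Your proof is correct and follows essentially the same route as the paper: control $Y_{(2)}(\rQ_n)$ via \refC{Rcor2}, apply the Chassaing--Schaeffer tail bound (\refP{diamlem}) conditionally on the submap sizes, and conclude with a union bound after checking that the rescaled threshold $xr(n)^{1/4}k^{-1/4}\gtrsim xr(n)^{1/24}$ exceeds $x_0$ and that the stretched-exponential decay beats the number of terms. The only (harmless) difference is that you cap the number of pendant submaps at $3r(n)$ via \refP{prop:Nsize}, whereas the paper simply union bounds over all $n-r(n)$ indices and uses $r(n)>(\ln n)^{25}$ to absorb the resulting $e^{\ln n}$ factor.
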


\begin{proof}
For $|\cF(\rQ_n)|<i\le n-r(n)$, write $\rP_{(i)}(\rQ_n)=\emptyset$. Let $k\in\N_{\ge0}$ with $k\le n-r+1$. For any $1\le i\le n-r(n)$, given that $|v(\rP_{(i)}(\rQ_n))|=k$, $\rP_{(i)}(\rQ_n)$ is uniformly distributed over $\cQ_k$ (denoting $\cQ_0=\emptyset$). By \refP{diamlem}, there exist positive constants $x_0$, $c_1$, and $c_2$ such that for all $x\ge x_0$, and for all $1\le i\le n-r(n)$,
\begin{equation}\label{conddiambound}
 \p{\diam(\rP_{(i)}(\rQ_n)) \ge xk^{1/4}~\Big\vert~ |v(\rP_{(i)}(\rQ_n))| =k }\le  c_1 e^{-c_2 x }.
\end{equation}
Now, fix $x>0$. We have
\begin{align*}
&\p{\max\limits_{2\le i\le n-r(n)}\diam(\rP_{(i)}(\rQ_n)) \ge x r(n)^{1/4}}\\
\le&~\p{\max\limits_{2\le i\le n-r(n)}\diam(\rP_{(i)}(\rQ_n)) \ge xr(n)^{1/4},Y_{(2)}(\rQ_n)\le r(n)^{5/6}} + \p{Y_{(2)}(\rQ_n)> r(n)^{5/6}}.
\end{align*}
By \refC{Rcor2}, $\p{Y_{(2)}(\rQ_n)> r(n)^{5/6}}=o(1)$. Next, by a union bound, it follows that
\begin{align}
&\p{\max\limits_{2\le i\le n-r(n)}\diam(\rP_{(i)}(\rQ_n)) \ge x r(n)^{1/4},Y_{(2)}(\rQ_n)\le r(n)^{5/6}}\notag\\
\le&~ \sum_{i=2}^{n-r(n)} \sup_{1\le k\le \lfloor r(n)^{5/6}\rfloor}  \p{\diam\left(\rP_{(i)}(\rQ_n)\right) \ge x r(n)^{1/4}~ \Big\vert ~|v(\rP_{(i)}(\rQ_n))| =k }.\label{doublesum}
\end{align}
For sufficiently large $n$ and for $1\le k\le r(n)^{5/6}$, we have $xr(n)^{1/4} k^{-1/4} \ge xr(n)^{1/24}\ge x_0$, then (\ref{conddiambound}) yields that, for $1\le i\le n-r(n)$,
\begin{align*}
\p{\diam\left(\rP_{(i)}(\rQ_n)\right) \ge x r(n)^{1/4} ~\Big\vert~ |v(\rP_{(i)}(\rQ_n))| =k }
\le c_1 \exp\left(-c_2 x r(n)^{1/24}\right).
\end{align*}
(\ref{doublesum}) then leads to
\begin{align*}
\p{\max\limits_{2\le i\le n-r(n)}\diam(\rP_{(i)}(\rQ_n)) \ge x r(n)^{1/4},Y_{(2)}(\rQ_n)\le r(n)^{5/6}}
\le c_1\exp\left(\ln n  - c_2x r(n)^{1/24} \right).
\end{align*}
By the assumption that $r(n)>(\ln n)^{25}$, we have
\begin{equation}\label{eq:assum}
\ln n  - c_2x r(n)^{1/24} \to -\infty,
\end{equation}
 completing the proof.
\end{proof}

\section{Uniformly Asymptotically Negligible Attachments}\label{sec:uan}

Fix $\rQ_n\in_u\cQ_n$. Recall from \refS{sec:graphintro} that $R^+(\rQ_n) = \rQ_n-\left(v\left(L(\rQ_n)\right)\setminus\{\rho_{\rQ_n}\}\right)$. In what follows, Propositions~\ref{prop:uan1} and~\ref{prop:uan} show that $R^+(\rQ_n)$ and $R(\rQ_n)$ have the same scaling limit, when respectively endowed with the measures in Theorem~\ref{thm1} and~\ref{thm2}. With the measure of Theorem~\ref{thm1}, no mass is assigned to the components of $R^+(\rQ_n)- v(R(\rQ_n))$, so the convergence in \refP{prop:uan1} is easier to establish than that in \refP{prop:uan}.

Recall the definition of pointed metric measure space from \refS{sec:main}. The {\em pointed GHP distance} between pointed metric measure spaces $\bV=(V,d,v,\mu)$ and $\bW=(W,d',w,\mu')$ is $
\dghp^\star\left(\bV,\bW \right) = \inf\left[ \max\left\{ \delta_\rH(\phi(V),\phi'(W)) , \delta_\rP(\phi_*\mu,\phi'_* \mu') , \delta(\phi(v),\phi'(w)) \right\}\right]$ where the infimum is taken over all metric space $(Z,\delta)$ and isometries $\phi,\phi'$ from $(V,d)$ and $(W,d')$ to $(Z,\delta)$, and $\delta_\rH,\delta_\rP$ denote Hausdorff and Prokhorov distances.

Given a graph $G$, we often write $d_G$ for the graph distance on any induced subgraph of $G$, and for the intrinsic metric in its approximating boundedly compact length space. Recall that 
$k_n = \left(\frac{40\cdot r(n)}{21}\right)^{1/4}$ for $n\in\N$.

\begin{prop}\label{prop:uan1}
For $\rQ_n\in_u\cQ_{n,r(n)}$, $\left( R^+(\rQ_n), ~ \frac{1}{k_n}\cdot d_{\rQ_n},~ \rho_{\rQ_n},~\frac{1}{r(n)}\cdot \mu_{R(\rQ_n)}\right) \convdist \bm_\infty$ for the pointed GHP topology.
\end{prop}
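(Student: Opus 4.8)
The plan is to derive the proposition from the convergence of the root block together with \refC{diam}. Write $R_n = R(\rQ_n)$, $R_n^+ = R^+(\rQ_n)$, $\rho_n = \rho_{\rQ_n}$, and put
\[
\widehat{\bR}_n^+ = \left(R_n^+,\tfrac1{k_n}d_{\rQ_n},\rho_n,\tfrac1{r(n)}\mu_{R_n}\right),
\qquad
\widehat{\bR}_n = \left(R_n,\tfrac1{k_n}d_{R_n},\rho_n,\tfrac1{r(n)}\mu_{R_n}\right).
\]
Since $R_n$ is a uniform $2$-connected quadrangulation on $r(n)$ vertices, \citep[Theorem 1.1]{ABW} gives $\widehat{\bR}_n \convdist \bm_\infty$ for the pointed GHP topology. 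As a first step I would reduce the claim, via the standard fact that $X_n\convdist X$ together with $\dghp^\star(X_n,Y_n)\convp0$ implies $Y_n\convdist X$, to showing $\dghp^\star(\widehat{\bR}_n^+,\widehat{\bR}_n)\convp0$.

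For the latter I would exhibit a correspondence $\cC_n$ between $R_n^+$ and $R_n$ together with a coupling of the two (identical) measures supported on it. Recall that $R_n^+$ is the pre-root-block of $\rQ_n$ with the non-largest pendant submaps $\rP_{(2)}(\rQ_n),\ldots,\rP_{(|\cF(\rQ_n)|)}(\rQ_n)$ attached, each at a single cut vertex, whereas $R_n$ is the pre-root-block with its facial $2$-cycles collapsed to edges; let $\pi$ be the map collapsing these $2$-cycles. Take $\cC_n$ to pair $(x,\pi(x))$ for each $x$ in the pre-root-block part of $R_n^+$, to pair $(x,\rho_{(i)})$ for each $x$ in a pendant submap $\rP_{(i)}(\rQ_n)$ with $i\ge2$, where $\rho_{(i)}\in v(R_n)$ is its attachment vertex, and to include $(\rho_n,\rho_n)$. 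For the measures, use the identity coupling of $\tfrac1{r(n)}\mu_{R_n}$ with itself over $v(R_n)$, which is legitimate since $\pi$ fixes vertices; this annihilates both the measure and the pointed terms in the GHP distance, so that $\dghp^\star(\widehat{\bR}_n^+,\widehat{\bR}_n)$ is controlled by $\tfrac12\dis(\cC_n)$ up to a universal constant.

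It then remains to bound $\dis(\cC_n)$ for the metrics $\tfrac1{k_n}d_{\rQ_n}$ and $\tfrac1{k_n}d_{R_n}$, and I expect this to be the only real work. Three observations drive it: (a) since every pendant submap meets the rest of $\rQ_n$ in a single cut vertex, a geodesic of $\rQ_n$ between vertices of $R_n$ never benefits from entering a pendant submap, so $d_{\rQ_n}$ restricted to $v(R_n)$ equals the pre-root-block graph distance, which in turn equals $d_{R_n}$ on vertices; (b) replacing a point $x$ of $\rP_{(i)}(\rQ_n)$ by $\rho_{(i)}$ alters its $\rQ_n$-distance to any other point by at most $\diam(\rP_{(i)}(\rQ_n))\le\Delta_n:=\max_{i\ge2}\diam(\rP_{(i)}(\rQ_n))$; and (c) un-collapsing a facial $2$-cycle perturbs distances inside the pre-root-block by at most the length ($=1$) of the collapsed edge, and --- crucially --- this does not accumulate over the possibly $\Theta(r(n))$ many $2$-cycles, because a geodesic traverses in full every $2$-cycle edge it uses and such full traversals lift without loss, the only defect being when the two endpoints of the pair under consideration lie on opposite edges of a common $2$-cycle. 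Combining (a)--(c) through the triangle inequality gives $\dis(\cC_n)\le C(1+\Delta_n)/k_n$ for an absolute constant $C$. Finally, $k_n=(40\,r(n)/21)^{1/4}\to\infty$, and \refC{diam} gives $\p{\Delta_n\ge x r(n)^{1/4}}=o(1)$ for each fixed $x>0$, i.e. $\Delta_n/r(n)^{1/4}\convp0$; hence $\dis(\cC_n)\convp0$, so $\dghp^\star(\widehat{\bR}_n^+,\widehat{\bR}_n)\convp0$ and the proposition follows. The main obstacle is precisely the uniformity in observation (c): one must verify that removing the collapses distorts the metric only by $O(1)$ and not by an amount growing with the number of facial $2$-cycles.
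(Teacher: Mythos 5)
Your proposal is correct and follows essentially the same route as the paper: reduce to showing $\dghp^\star(\widehat{\bR}_n^+,\widehat{\bR}_n)\convp0$, observe that the two spaces carry identical measures so only the metric discrepancy matters, and control that discrepancy by $\max_{i\ge2}\diam(\rP_{(i)}(\rQ_n))/k_n$, which tends to $0$ in probability by Corollary~\ref{diam} since $k_n=\Theta(r(n)^{1/4})$. The paper shortcuts your correspondence and observations (a)--(c) by equipping both spaces with the subspace metric $\frac1{k_n}d_{\rQ_n}$, so that $R_n\subset R_n^+$ and the bound reduces to a Hausdorff distance; your extra care about un-collapsing the facial $2$-cycles is sound but not a different argument.
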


\begin{proof}
For $n\in\N$, let $\rQ_n\in_u\cQ_{n,r(n)}$. Write $\widehat{\bR}_n = \left(R(\rQ_n),~\frac{1}{k_n}\cdot d_{\rQ_n},~\rho_{\rQ_n},~\frac{1}{r(n)}\cdot \mu_{R(\rQ_n)} \right)$ and $\widehat{\bR}_n^+ = \left(R^+(\rQ_n),~\frac{1}{k_n}\cdot d_{\rQ_n},~\rho_{\rQ_n},~\frac{1}{r(n)}\cdot \mu_{R(\rQ_n)}\right)$. As discussed in \refS{sec:outline}, $\widehat{\bR}_n\convdist \bm_\infty$ for the pointed GHP topology, so it suffices to show that $\dghp^\star\left(\widehat{\bR}_n,\widehat{\bR}_n^+\right)\convp0$. Write $\cO_n$ for the set of components of $R^+(\rQ_n) - v(R(\rQ_n))$. Since $\widehat{\bR}_n$ and $\widehat{\bR}_n^+$ are equipped with the same measure, it follows from the definition of $\dghp^\star$ that $\dghp^\star\left(\widehat{\bR}_n,\widehat{\bR}_n^+\right)
 \le \frac{1}{k_n} \max_{G\in \cO_n} (\diam(G)+1)$. By \refC{diam}, $ \max_{G\in \cO_n} \diam(G) = o(r(n)^{1/4})$ with $1-o(1)$ probability. Since $k_n = \Theta(r(n)^{1/4})$, it follows that $\dghp^\star\left(\widehat{\bR}_n,\widehat{\bR}_n^+\right) \convp0$.
\end{proof}

\begin{prop}\label{prop:uan}
Given $\rQ_n\in_u\cQ_{n,r(n)}$, for the pointed GHP topology,
\begin{equation}\label{conv:uan}
\left( R^+(\rQ_n), ~\frac{1}{ k_n}\cdot d_{\rQ_n},~ \rho_{\rQ_n},~\frac{1}{|v(R^+(\rQ_n))|}\cdot \mu_{R^+(\rQ_n)}\right) \convdist \bm_\infty.
\end{equation}
 \end{prop}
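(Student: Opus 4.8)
The plan is to compare the two spaces in \eqref{conv:uan} and \refP{prop:uan1}: the former carries the measure $\frac{1}{|v(R^+(\rQ_n))|}\mu_{R^+(\rQ_n)}$, the latter carries $\frac{1}{r(n)}\mu_{R(\rQ_n)}$. Since \refP{prop:uan1} already gives convergence of $\left(R^+(\rQ_n),\frac{1}{k_n}d_{\rQ_n},\rho_{\rQ_n},\frac{1}{r(n)}\mu_{R(\rQ_n)}\right)$ to $\bm_\infty$, it suffices to show that these two pointed metric measure spaces are asymptotically close in $\dghp^\star$, and the only thing that differs is the measure. Using the identity coupling (the identity map is an isometry of $(R^+(\rQ_n),\frac{1}{k_n}d_{\rQ_n})$ onto itself), the pointed GHP distance between them is bounded by the Prokhorov distance between the two normalized measures on $(R^+(\rQ_n),\frac{1}{k_n}d_{\rQ_n})$; so I would estimate that Prokhorov distance and show it tends to $0$ in probability.

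\textbf{Key steps.} First I would record that $v(R^+(\rQ_n)) = v(R(\rQ_n)) \sqcup \bigsqcup_{G\in\cO_n} v(G)$, where $\cO_n$ is the set of components of $R^+(\rQ_n)-v(R(\rQ_n))$, and hence $|v(R^+(\rQ_n))| = r(n) + \sum_{G\in\cO_n}|v(G)|$. The total excess mass $\sum_{G\in\cO_n}|v(G)|$ is exactly the number of vertices in the non-largest pendant submaps minus one per submap, i.e.\ $\sum_{i\ge2}(Y_{(i)}(\rQ_n)) = (n-r(n)) - Y_{(1)}(\rQ_n)$ in the notation of \refS{sec:struc}; by \refC{Rcor} we have $Y_{(1)}(\rQ_n) \ge \frac{m(n)}{(\ln n)^2}$, and more strongly, by \refC{Rcor2} every non-largest pendant submap has at most $r(n)^{5/6}$ vertices while \refP{prop:Nsize} bounds their number by $3r(n)$; combining, $\sum_{G\in\cO_n}|v(G)| = O(r(n)^{11/6})$ with high probability, which is $o(r(n))$ since $11/6 < 2$... wait, $r(n)^{11/6}$ is not $o(r(n))$. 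I should instead argue directly: $\sum_{G\in\cO_n}|v(G)| = n - r(n) - |v(L(\rQ_n))|$, and by \refP{allocationprop}/\refC{Rcor} (plus the number-of-blocks bound) $|v(L(\rQ_n))| = n(1-o(1))$, whereas $n - r(n) - |v(L(\rQ_n))| \le 3r(n)\cdot r(n)^{5/6} = 3r(n)^{11/6}$ with high probability. Then the key ratio is $\frac{\sum_{G\in\cO_n}|v(G)|}{|v(R^+(\rQ_n))|} = \frac{O(r(n)^{11/6})}{r(n)(1+o(1))} = O(r(n)^{5/6}) \to\infty$ --- that is too big, so the naive total-variation-type bound on the measures fails.

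\textbf{The refined comparison.} So the measures are genuinely not close in total variation, and I must exploit that the excess mass is \emph{spatially spread out}. The second part of the argument in \refS{sec:uan} --- Lemmas~\ref{lem:vtxdegree} and~\ref{lem:split} --- is designed precisely to show that the components of $R^+(\rQ_n)-v(R(\rQ_n))$ do not concentrate near any single point, and combined with \refC{diam} (each component has diameter $o(r(n)^{1/4}) = o(k_n)$) this means that after rescaling by $1/k_n$, each component collapses to (essentially) the point of $R(\rQ_n)$ it hangs from, and these attachment points are themselves spread according to (a scaled version of) the uniform measure on $R(\rQ_n)$, which converges to the Brownian map volume measure. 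Concretely, I would define a measure-transport plan that moves the mass $\frac{1}{|v(R^+(\rQ_n))|}\mu_{R^+(\rQ_n)}$ off the components onto their attachment vertices in $R(\rQ_n)$: this costs at most $\frac{1}{k_n}\max_{G\in\cO_n}\diam(G) = o(1)$ in Prokhorov distance by \refC{diam}. It then remains to compare $\frac{1}{|v(R^+(\rQ_n))|}\bigl(\mu_{R(\rQ_n)} + \sum_{G\in\cO_n}|v(G)|\,\delta_{a_G}\bigr)$ with $\frac{1}{r(n)}\mu_{R(\rQ_n)}$, where $a_G$ is the attachment vertex of $G$; here I would invoke the no-concentration estimates (Lemmas~\ref{lem:vtxdegree}, \ref{lem:split}) to say that for any $\veps>0$, with high probability every ball of radius $\veps k_n$ in $R(\rQ_n)$ receives at most $\veps\cdot|v(R^+(\rQ_n))|$ of the redistributed mass, and combine this with the known GHP convergence of $\widehat{\bR}_n$ to deduce that the Prokhorov distance between the two measures on $(R^+(\rQ_n),\frac1{k_n}d_{\rQ_n})$ is $o(1)$ in probability. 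Chaining the three bounds via the triangle inequality and applying \refP{prop:uan1} gives \eqref{conv:uan}.

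\textbf{Main obstacle.} The hard part is the last comparison: showing that after transporting the component masses to their attachment points, the resulting atomic perturbation of $\frac{1}{|v(R^+(\rQ_n))|}\mu_{R(\rQ_n)}$ is Prokhorov-small. This is where the no-concentration Lemmas~\ref{lem:vtxdegree} and~\ref{lem:split} are essential --- without them the perturbation could pile a macroscopic fraction of the total mass onto a single point and break GHP convergence. I expect the delicate point to be quantifying "spread out" in a way that interacts correctly with the scaling: one needs that the attachment points, weighted by component sizes, are distributed comparably to the uniform measure on $R(\rQ_n)$ at scale $k_n$, which should follow from a symmetry/exchangeability argument on $\cQ_{n,r(n),N(n)}$ (the labelled-boxes structure of \refS{sec:stable} makes the pendant submaps attach at asymptotically uniform corners of $R(\rQ_n)$) together with the fact that, conditionally, corner counts of a $2$-connected quadrangulation are spread like the volume measure.
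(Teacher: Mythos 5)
Your proposal is correct and follows essentially the same route as the paper: collapse each pendant component onto its attachment vertex at Prokhorov/Hausdorff cost $\frac{1}{k_n}\max_{G\in\cO_n}\diam(G)=o_{\P}(1)$ (Corollary~\ref{diam} via Fact~\ref{fact:ghp}), bound the maximal mass attached to any single vertex by $O\bigl((\ln r(n))^2 r(n)^{5/6}\bigr)=o(r(n))$ using Corollary~\ref{Rcor2} together with Lemmas~\ref{lem:vtxdegree} and~\ref{lem:split}, and then compare the resulting atomic measure to the uniform measure on $R(\rQ_n)$ by exchangeability. The final comparison you defer is exactly the paper's Lemma~\ref{lem:prok}: an exchangeable-vector Hoeffding bound showing $d_\rP\bigl(\frac{1}{|\bn|_1}\mu^{\bn}_{\rR_n},\frac{1}{r(n)}\mu_{R_n}\bigr)=o(k_n)$ once $|\bn|_2/|\bn|_1\to0$, which is the precise form of ``no concentration'' needed (your ball-by-ball formulation would not quite suffice on its own, but the exchangeability mechanism you invoke is the right one).
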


We devote the rest of this section to proving \refP{prop:uan}. We proceed in the following two steps to show that given $\rQ_n\in_u\cQ_{n,r(n)}$, with high probability there are no $w\in v(R(\rQ_n))$ to which an overly large mass of pendant submaps attach. First, in \refL{lem:vtxdegree} we prove a tail bound for the maximum degree in a uniform rooted $2$-connected quadrangulation. Secondly, we prove that with high probability no edge of $R(\rQ_n)$ is subdivided many times in $\rQ_n$, as shown in \refL{lem:split}.

Given a graph $G$, for $u\in v(G)$, write $\deg_G(u)$ for the degree of $u$ in $G$. Recall that $\cR_r$ denotes the set of rooted $2$-connected quadrangulations with $r$ vertices.

\begin{lem}\label{lem:vtxdegree}
Fix $x\in\N$. For any $\veps>1/2$ there exists $B>0$ such that for all $r\in\N$, given $\rR_{r}\in_u\cR_{r}$, $\p{\max_{u\in v(\rR_r)} \deg_{\rR_{r}} (u) = x}\le B \veps^x r^{5/3}$.
\end{lem}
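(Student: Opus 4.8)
I would first dispose of small $r$ and reduce to a counting estimate. For $x>2r-4$ the probability is zero (a $2$-connected quadrangulation on $r$ vertices has $2r-4$ edges), and for $r$ bounded and $x\le 2r-4$ there are only finitely many relevant pairs $(r,x)$, so once $\veps\in(1/2,1)$ is fixed the corresponding probabilities are absorbed into $B$; hence assume $r$ large. Since $\{\max_{u}\deg_{\rR_r}(u)=x\}\subseteq\{\exists u:\deg_{\rR_r}(u)=x\}$, a first-moment (pointing) bound gives
\[
\p{\max_{u\in v(\rR_r)}\deg_{\rR_r}(u)=x}\le \E{\#\{u\in v(\rR_r):\deg_{\rR_r}(u)=x\}}=\frac{D_r(x)}{|\cR_r|},
\]
where $D_r(x):=\#\{(\rR,u):\rR\in\cR_r,\ \deg_\rR(u)=x\}$ counts $2$-connected quadrangulations on $r$ vertices carrying a marked vertex of degree $x$. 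It thus suffices to show $D_r(x)\le C\,q(x)\,\theta^{\,x}\,r^{5/3}\,|\cR_r|$ for constants $C>0$, $\theta\le\tfrac12$ and a fixed polynomial $q$; then $q(x)\theta^{x}=O(\veps^{x})$ for any $\veps>1/2$ yields the lemma.

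To bound $D_r(x)$ I would decompose a pair $(\rR,u)$ around the marked vertex $u$. The $x$ faces of $\rR$ incident to $u$ are quadrangles, and recording how $\rR$ is reassembled from the structures sitting in the $x$ corners at $u$ exhibits $(\rR,u)$ as a cyclic word of length $x$ in ``slices'', each slice being a rooted $2$-connected quadrangulation (equivalently a rooted quadrangulation-with-boundary of bounded perimeter); crucially this decomposition respects $2$-connectedness, so the slices range over rooted variants of the class $\cR$ itself. Passing to generating functions, with $R(z)=\sum_{r}|\cR_r|z^{r}$ (radius of convergence $\rho$, with a square-root-type singularity at $\rho$, whence $|\cR_r|\sim c\,\rho^{-r}r^{-5/2}$ by \cite{ABW,BFSS}) and $\Phi(z)$ the slice generating function, the series $\sum_{r}D_r(x)z^{r}$ is dominated, up to a factor polynomial in $x$, by a ``root'' series times $\Phi(z)^{x}$. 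The key point is $\Phi(\rho)\le\tfrac12$: each petal hanging off the degree-$x$ vertex costs, at criticality, a factor at most $\tfrac12$. Granting this, a Cauchy/transfer estimate at $z=\rho$ gives $D_r(x)\le C\,q(x)\,2^{-x}\,\rho^{-r}\,r^{\alpha}$ for some fixed $\alpha$, and dividing by $|\cR_r|\asymp\rho^{-r}r^{-5/2}$ produces a polynomial factor $r^{\alpha+5/2}$; a more careful accounting — retaining the constraint that the $x$ slices and the remaining block have total size $r$, which forces the slices to be small and brings in the $3/2$-stable local-limit scaling $r^{2/3}$, as in \refS{sec:allocation} — sharpens this to $r^{5/3}$.

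The main obstacle is precisely the slice decomposition together with the estimate $\Phi(\rho)\le\tfrac12$: one must carry out the surgery at the high-degree vertex in a way that stays inside the $2$-connected world (a naive ``delete $u$'' surgery lands among all quadrangulations-with-boundary, a class exponentially larger than $\cR$ and hence useless), and then verify that the per-petal weight at the critical point is at most $\tfrac12$ — this last fact is exactly what forces, and is sharp for, the hypothesis $\veps>1/2$. Everything else (the pointing reduction, the asymptotics of $|\cR_r|$, and the transfer estimates) is routine; as with the exponent $25$ elsewhere in the paper, the precise exponent $5/3$ is not optimal, and only the qualitative bound $\veps^{x}\cdot\mathrm{poly}(r)$ is used in the sequel.
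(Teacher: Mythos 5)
Your plan takes a genuinely different route from the paper, but it leaves a genuine gap at exactly the point where all the work lies. The two steps you yourself flag as ``the main obstacle'' --- a slice decomposition at the marked high-degree vertex that stays inside the $2$-connected class, and the bound $\Phi(\rho)\le\tfrac12$ on the per-petal weight at criticality --- are asserted, not established, and together they \emph{are} the lemma: the inequality $\Phi(\rho)\le\tfrac12$ is precisely the exponential degree-decay statement you are trying to prove, repackaged in generating-function form. Designing a surgery at a vertex of a $2$-connected quadrangulation whose pieces range over a class with the same singularity as $\cR$ (rather than over quadrangulations with boundary) is a nontrivial bijective problem, and nothing in the proposal indicates how to do it or how to evaluate the resulting series at its singularity. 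The claimed sharpening from $r^{\alpha+5/2}$ to $r^{5/3}$ via a ``$3/2$-stable local-limit'' accounting is likewise only gestured at (though the precise power of $r$ is immaterial for the application).

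The paper sidesteps all of this with a transfer argument: it quotes the known bound $\p{\max_v\deg_{\rQ_n}(v)=x}\le c\veps^x n$ for a uniform (general) quadrangulation $\rQ_n\in_u\cQ_n$ from Bender--Canfield, takes $n=\lfloor 15r/7\rfloor$, and observes that conditionally on the largest block of $\rQ_n$ having $r$ vertices, that block is distributed as $\rR_r\in_u\cR_r$ and its maximum degree is at most that of $\rQ_n$. Since $\p{|v(\rR'(\rQ_n))|=r}\ge c'r^{-2/3}$ by \citep[Proposition 4.3]{ABW}, dividing by this conditioning probability costs only $r^{2/3}$, and $r^{2/3}\cdot n=O(r^{5/3})$ gives the stated bound. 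In other words, the hard analytic input (the constant $1/2$ in the exponential rate) is imported from the literature for the \emph{unconstrained} model and then transferred to the $2$-connected model at polynomial cost, rather than re-derived inside $\cR$. If you want to salvage your approach you would need to actually carry out the decomposition and prove $\Phi(\rho)\le\tfrac12$; as written, the proposal does not constitute a proof.
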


\begin{proof}
This straightforward proof is a slight modification of the proof for \citep[Lemma 7.2]{ABW}. First, for any $\veps>1/2$ there exists $c>0$ such that for all $n\in\N$, given $\rQ_n\in_u\cQ_n$,
\begin{equation}\label{in:degbd}
\p{\max_{v\in v(\rQ_n)}\deg_{\rQ_n}(v) = x}\le  c\veps^xn;
\end{equation}
see \citep[Theorem 2.1 (a)]{BC}. Now, fix $r\in\N$ and write $n=\lfloor 15r/7\rfloor$. Let $\rR_r\in_u\cR_{r}$ and $\rQ_n\in_u\cQ_n$. Next, let $\rR'(\rQ_n)$ be the largest block of $\rQ_n$, rooted at its $\prec_{\rQ_n}$-minimal edge; if there are multiple blocks of size $|v(\rR'(\rQ_n))|$, then among these blocks we choose $\rR'(\rQ_n)$ to be the one whose root edge is $\prec_{\rQ_n}$-minimal. Given that $|v(\rR'(\rQ_n))|=r$, $\rR'(\rQ_n)$ has the same law as $\rR_r$. So
\begin{align*}
\p{\max_{u\in v(\rR_r)} \deg_{\rR_r}(u)=x}
=&~ \p{\max\limits_{v\in v(\rR'(\rQ_n))}\deg_{\rR'(\rQ_n)}(v)=x~\Big\vert~ |v(\rR'(\rQ_n))|=r}\\
\le &~\frac{\p{\max\limits_{v\in v(\rR'(\rQ_n))} \deg_{\rR'(\rQ_n)}(v)=x}}{\p{|v(\rR'(\rQ_n))|=r}}
\le \frac{\p{\max\limits_{v\in v(\rQ_n)} \deg_{\rQ_n}(v)=x}}{\p{|v(\rR'(\rQ_n))|=r}}.
\end{align*}
Note that $n = \lfloor 15r/7 \rfloor$. By \citep[Proposition 4.3]{ABW}, there thus exists $c'>0$ such that $\p{|v(\rR'(\rQ_n))|=r}\ge  c' r^{-2/3}$. Together with (\ref{in:degbd}), it follows that for all $n\in\N$,
\[
\p{\max_{u\in v(\rR_r)} \deg_{\rR_r}(u)=x}\le \frac{c}{c'}\veps^x r^{2/3}n \le \frac{15c}{7c'}\veps^x r^{5/3}.\qedhere
\]
\end{proof}

Fix $\rQ_n\in \cQ_{n,r(n)}$ for now. List the edges of $R(\rQ_n)$ as $e_1,\ldots,e_{2r(n)-4}$ in $\prec_{\rQ_n}$-order. Create an extra copy $e_0$ of $e_1$ as in the decomposition described in the proof of \refL{quadcount}. For $i=0,\ldots,2r(n)-4$, write $\ell_i(\rQ_n)$ for the number of copies of $e_i$ in $\rQ_n$ minus one, that is, $\ell_i(\rQ_n)$ is the number of facial $2$-cycles resulting from the split of $e_i$.

\begin{lem}
\label{lem:split}
For $\rQ_n\in_u \cQ_{n,r(n)}$, $\p{\max_{0\le i\le 2r(n)-4}\ell_i(\rQ_n)>5 \ln r(n)} = O\left(r(n)^{-1}\right)$. 
\end{lem}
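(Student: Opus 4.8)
The plan is to condition on $N := |\cF(\rQ_n)|$, the number of facial $2$-cycles in the pre-root-block of $\rQ_n$, and, on the typical event that $N$ is not too large, to recognise $(\ell_i(\rQ_n))_{0\le i\le 2r(n)-4}$ as a conditionally uniform allocation of $N$ unlabelled balls into $2r(n)-3$ labelled boxes; the bound then follows from an elementary occupancy estimate together with a union bound. Write $r=r(n)$.

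I would first discard the atypical event $\{N\ge 3r\}$. Since $r=o(n)$, the hypothesis $(n-r)/2\ge 2\nu r$ of \refP{prop:Nsize} holds for all large $n$, so that proposition with $\lambda=1/2$ gives $\p{N\ge 3r}\le c\,n^{5/2}(4/9)^{r}$; because $r>(\ln n)^{25}$, this decays faster than any power of $n$ and is in particular $o(r^{-1})$. It thus suffices to bound $\p{\max_{0\le i\le 2r-4}\ell_i(\rQ_n)>5\ln r\mid N=N_0}$ by $O(r^{-1})$, uniformly over integers $1\le N_0<3r$.

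The key step is to identify this conditional law, for which I would use the bijective decomposition in the proof of \refL{quadcount}. Summed over all allocations, it presents each element of $\cQ_{n,r,N_0}$ as encoded by a rooted $2$-connected quadrangulation $\rR\in\cR_r$; a subdivision vector $(m_i)_{0\le i\le 2r-4}$ with $m_i\ge 1$ and $\sum_i m_i=2r-3+N_0$, for which $\ell_i=m_i-1\ge 0$ and $\sum_i\ell_i=N_0$; and, for each of the resulting $N_0$ faces, a pointed quadrangulation to be inserted together with a corner choice. For fixed $\rR$ and $(m_i)$ the number of ways to make this last choice is $\sum_{z\in\cB_{n-r,N_0}}2^{N_0}\prod_{j=1}^{N_0}|\cQ_{z_j+1}|$, which does not depend on $\rR$ or $(m_i)$; hence under the uniform law on $\cQ_{n,r,N_0}$ the vector $(m_i)$ — and therefore $(\ell_i(\rQ_n))_{0\le i\le 2r-4}$ — is uniform over the $\binom{N_0+2r-4}{N_0}$ compositions of $N_0$ into $2r-3$ nonnegative parts. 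Granting this, set $s:=\lfloor 5\ln r\rfloor+1$, so $s\ge 5\ln r$; counting compositions directly,
\[
\p{\ell_i(\rQ_n)\ge s\mid N=N_0}=\prod_{j=0}^{s-1}\frac{N_0-j}{N_0-j+2r-4}\le\left(\frac{N_0}{N_0+2r-4}\right)^{s}\le\exp\left(-\frac{(2r-4)s}{N_0+2r-4}\right)
\]
(with the left side read as $0$ when $N_0<s$). Since $N_0<3r$ gives $N_0+2r-4<5r$ and $s\ge 5\ln r$, the exponent is at most $-(2r-4)(\ln r)/r=-2\ln r+4(\ln r)/r$, so each such probability is $O(r^{-2})$; a union bound over the $2r-3$ indices $i$ then yields $\p{\max_i\ell_i(\rQ_n)>5\ln r\mid N=N_0}=O(r^{-1})$, uniformly in $N_0<3r$. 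Combined with the reduction above, this proves the lemma.

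The only genuinely nontrivial ingredient is the conditional-uniformity claim: that, given $N$, the vector of per-edge subdivision counts is uniform over all compositions and carries no information about the pendant submaps. The point to verify with care is that the number of ways of attaching the decorations is truly independent of the root block and of the subdivision vector — precisely the product structure already exploited in the proof of \refL{quadcount}. The remaining estimates are routine bookkeeping.
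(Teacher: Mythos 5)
Your proof is correct and follows essentially the same route as the paper's: both reduce to the event $\{|\cF(\rQ_n)|<3r\}$ via Proposition~\ref{prop:Nsize}, identify $(\ell_i)$ conditionally as a uniform weak composition of $|\cF(\rQ_n)|$ into $2r-3$ parts, bound the tail of one part by $O(r^{-2})$ via elementary composition counting, and finish with a union bound. Your explicit justification of the conditional uniformity from the product structure in Lemma~\ref{quadcount} is a point the paper merely asserts, but the argument is the same.
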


\begin{proof}
For $n\in\N$ let $\rQ_n\in_u\cQ_{n,r(n)}$. Note that $\sum_{i=0}^{2r(n)-4} \ell_i(\rQ_n) = |\cF(\rQ_n)|$. It follows that the vector $(\ell_0(\rQ_n),\ldots,\ell_{ 2r(n)-4})$ is distributed as a uniformly random weak composition of $|\cF(\rQ_n)|$ into $2r(n)-3$ parts. (Recall that in a weak composition, empty parts are allowed.) In particular, $\ell_0(\rQ_n)$ is distributed as the size of the first part in such a composition. Using that the number of weak compositions of $a$ into $b$ is ${a+b-1\choose b-1}$, and noting that $\left(1-\frac{b-1}{j+b-1}\right)$ is increasing in $j$, it follows that for integer $j\le k$, $\p{\ell_0(\rQ_n)>j~\big\vert~ |\cF(\rQ_n)|=k}
\le \left(1- \frac{2r(n)-4}{k+2r(n)-4}\right)^j$. Moreover, it follows from \refP{prop:Nsize} that $\p{ |\cF(\rQ_n)| \ge 3r(n)} = O\left(\left(4/9\right)^{r(n)} n^{5/2}\right)$. Recalling that $r(n)> (\ln n)^{25}$, it follows that
\begin{align*}
&\p{\ell_0(\rQ_n)> 5\ln r(n)}\\
\le&~
\sum_{k<3r(n)} \p{ \ell_0(\rQ_n)> 5 \ln r(n)~\big\vert~
 |\cF(\rQ_n)|=k}\p{ |\cF(\rQ_n)|=k}  +\p{ |\cF(\rQ_n)|\ge 3r(n)}\\
\le&~\left(1-\frac{2r(n)-4}{3r(n)+2r(n)-4}\right)^{5\ln r(n)}(1-o(1))+O\left(\left(4/9\right)^{r(n)} n^{5/2}\right)\\
\le &~e^{-2\ln r(n)}(1-o(1))+O\left(\left(4/9\right)^{r(n)} n^{5/2}\right)
=O\left(r(n)^{-2}\right).
\end{align*}
Finally, by a union bound,
\[
\p{\max_{0\le i\le 2r(n)-4}\ell_i(\rQ_n)> 5\ln r(n)}
\le 2r(n) \p{\ell_0(\rQ_n)>5\ln r(n)}=O\left(r(n)^{-1}\right).\qedhere
\]
\end{proof}

The next two facts provide deterministic bounds on the pointed GHP distance. Versions of these facts which apply to the non-pointed GHP distance appear in \citep[Facts 6.3 and 6.4]{ABW}, and we omit their proofs.

\begin{fact}\label{fact:ghp0}
Fix a pointed metric measure space $\bV=(V,d,o,\mu)$, and let $W \subset V$ with $o\in W$. Let $\mu_W$ be a Borel measure on $(W,d)$, and write $\bW=(W,d,o,\mu_W)$. Then 
\[
\dghp^\star(\bV,\bW)\le \max\left\{d_\rH(\bV,\bW),d_\rP(\mu,\mu_W)\right\}.
\]
\end{fact}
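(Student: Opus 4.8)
The plan is to realize both pointed spaces inside a single ambient metric space in the most economical way possible, namely $(V,d)$ itself, and to use the identity on $V$ together with the inclusion of $W$ as the two isometric embeddings. Concretely, I would take $(Z,\delta)=(V,d)$, let $\phi=\mathrm{id}_V\colon V\to Z$ be the identity map, and let $\phi'\colon W\to Z$ be the inclusion $W\hookrightarrow V$. Since $W$ carries the metric $d$ restricted from $V$, the map $\phi'$ is an isometry onto its image, and $\phi$ trivially is; so this is an admissible choice in the infimum defining $\dghp^\star(\bV,\bW)$.

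With this choice the three quantities appearing in the definition of $\dghp^\star$ become transparent. First, $\phi(V)=V$ and $\phi'(W)=W$ as subsets of $Z=V$, so $\delta_\rH(\phi(V),\phi'(W))$ is exactly the Hausdorff distance between $V$ and $W$ computed in $(V,d)$, which is what the statement abbreviates by $d_\rH(\bV,\bW)$. Second, $\phi_*\mu=\mu$ and $\phi'_*\mu_W=\mu_W$, both regarded as Borel measures on $(V,d)$, so $\delta_\rP(\phi_*\mu,\phi'_*\mu_W)=d_\rP(\mu,\mu_W)$. Third, since $o\in W$ we have $\phi(o)=\phi'(o)=o$, hence $\delta(\phi(o),\phi'(o))=d(o,o)=0$. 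Taking the maximum of these and then the infimum over all admissible $(Z,\delta,\phi,\phi')$ yields
\[
\dghp^\star(\bV,\bW)\le \max\bigl\{d_\rH(\bV,\bW),\,d_\rP(\mu,\mu_W),\,0\bigr\}=\max\bigl\{d_\rH(\bV,\bW),\,d_\rP(\mu,\mu_W)\bigr\},
\]
which is the claim.

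There is essentially no obstacle here: the only thing to verify is that the inclusion $W\hookrightarrow V$ is an isometric embedding, which is immediate because $W$ is equipped with the restriction of $d$; everything else is just unwinding the definition of the pointed GHP distance against this canonical coupling. (One may also note for emphasis that the distinguished-point term drops out entirely, which is why no contribution from $o$ appears on the right-hand side.)
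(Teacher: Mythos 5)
Your proof is correct and is exactly the standard argument the paper has in mind (the paper omits the proof, citing the non-pointed analogues in \citep[Facts 6.3 and 6.4]{ABW}): take the ambient space to be $(V,d)$ itself with the identity and inclusion as the embeddings, observe that the root term vanishes since $o\in W$, and read off the Hausdorff and Prokhorov terms.
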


\begin{fact}\label{fact:ghp}
Fix a pointed metric measure space $\bV=(V,d,o,\mu)$. Let $W\subset V$ be finite with $o\in W$ so that there exists $\veps>0$ with $V= \{u\in V: d(u,W)\le\veps \}$. Let $\{P_w:w\in W\}$ be such that $\bigcup_{w\in W}P_w =V$, that $\mu(P_w \cap P_{w'})=0$ for $w \ne w'$, and that $P_w \subset \{u\in V: d(u,w)\le \veps\}$ for all $w \in W$. Define a measure $\nu$ on $W$ by setting $\nu(w) = \mu(P_w)$ for any $w\in W$, and let $\bW=(W,d,o,\nu)$. Then $\dghp^\star(\bV,\bW) \le \veps$.
\end{fact}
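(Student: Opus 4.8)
The plan is to produce a single metric space realising both $\bV$ and $\bW$ by taking $Z=(V,d)$ itself, with $\phi=\mathrm{id}_V$ and $\phi'$ the inclusion $W\hookrightarrow V$. Since $o\in W$, the two distinguished points are sent to the same point of $Z$, so the term $\delta(\phi(o),\phi'(o))$ in the definition of $\dghp^\star$ is $0$. It then remains to check that the Hausdorff distance between $V$ and $W$ and the Prokhorov distance between $\mu$ and $\nu$ (regarded as Borel measures on $V$) are each at most $\veps$. The first is immediate: $W\subseteq V$, and the hypothesis $V=\{u\in V:d(u,W)\le\veps\}$ says precisely that every point of $V$ lies within distance $\veps$ of $W$, so $\delta_\rH(V,W)\le\veps$.

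For the Prokhorov distance the idea is to transport, for each $w\in W$, the $\mu$-mass of the cell $P_w$ onto the single point $w$; since $P_w\subseteq\{u:d(u,w)\le\veps\}$, this moves mass by at most $\veps$. The only technical point is that the cells overlap, so I would first disjointify them: fix an arbitrary order on the finite set $W$ and set $P'_w=P_w\setminus\bigcup_{w'<w}P_{w'}$. Because $\mu(P_w\cap P_{w'})=0$ for $w\ne w'$, we get $\mu(P'_w)=\mu(P_w)=\nu(w)$, while $\{P'_w:w\in W\}$ is a Borel partition of $V=\bigcup_w P_w$ with $P'_w\subseteq P_w$. Define $f\colon V\to W$ by $f(u)=w$ when $u\in P'_w$; then $f$ is Borel, $d(u,f(u))\le\veps$ for all $u\in V$, and $f_\ast\mu=\nu$. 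Consequently, for every Borel $A\subseteq V$, writing $A^{\le\veps}=\{u\in V:d(u,A)\le\veps\}$, one has $f^{-1}(A)\subseteq A^{\le\veps}$ and hence $\nu(A)=\mu(f^{-1}(A))\le\mu(A^{\le\veps})$, while $\mu(A)=\sum_w\mu(A\cap P'_w)\le\sum_{w:\,A\cap P'_w\ne\emptyset}\nu(w)\le\nu(A^{\le\veps})$ since each such $w$ lies in $A^{\le\veps}$. This gives $\delta_\rP(\mu,\nu)\le\veps$; alternatively one may invoke Strassen's theorem, noting that the coupling $(\mathrm{id},f)_\ast\mu$ is supported on the closed set $\{(u,w):d(u,w)\le\veps\}$.

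Combining the three estimates, $\dghp^\star(\bV,\bW)\le\max\{\veps,\veps,0\}=\veps$, as claimed. The argument is the pointed analogue of \citep[Fact 6.4]{ABW}, and the only thing requiring genuine care is the measure-theoretic bookkeeping for the overlapping cells together with the open-versus-closed neighbourhood convention in the definition of the Prokhorov distance; both are dispatched by the disjointification above, or, if one prefers, by establishing the bound for every $\alpha>\veps$ and letting $\alpha\downarrow\veps$.
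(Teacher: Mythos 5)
Your proof is correct. The paper itself omits the proof of this fact, merely pointing to the non-pointed analogues in \citep[Facts 6.3 and 6.4]{ABW}, so there is no in-paper argument to compare against; your write-up supplies exactly the standard argument one would expect: embed $W$ into $V$ by inclusion (so the distinguished points coincide and the Hausdorff bound is the hypothesis $V=\{u:d(u,W)\le\veps\}$), and bound the Prokhorov distance by pushing the mass of each cell onto its centre. The two points you flag as needing care --- disjointifying the overlapping cells using $\mu(P_w\cap P_{w'})=0$ so that $f_*\mu=\nu$, and the closed-versus-open neighbourhood convention handled by taking $\alpha\downarrow\veps$ --- are indeed the only delicate steps, and you dispatch both correctly.
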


The final ingredient for proving \refP{prop:uan} is an asymptotic bound on the Prokhorov distance between the uniform measure on the vertices of a graph and a certain exchangeable perturbation of this measure. This is a reprise of \citep[Lemma 5.3, Corollaries 6.1 and 6.2]{ABW}. We start by introducing notations.

In the sequel, for $\bn = (\rn_1,\ldots,\rn_n)\in \R^n$ and $p>0$, write $|\bn|_p = \left(\sum_{i=1}^n \rn_i^p\right)^{1/p}$. Now, fix $n\in\N$, and let $\bn = (\rn_1,\ldots,\rn_n)$ be a vector of non-negative real numbers with $|\bn|_1>0$. Fix a rooted graph $\rG\in \cG_n$, and list the vertices as $v_1,\ldots, v_n$ in the $<_{\rG}$-order. Then define a measure on $v(\rG)$ by setting, for $V\subset v(\rG)$, $\mu_{\rG}^{\bn}(V) = \sum_{\{i:v_i \in V\}} \rn_i$. In words, we view $\rn_i$ as the total mass of pendant submaps attached to $v_i$, and $\mu_{\rG}^{\bn}$ as the measure assigning each vertex $v_i$ a mass of $\rn_i$. Recall that $\mu_\rG= \sum_{v\in v(\rG)} \delta_v$ is the counting measure.\footnote{Notice the different notations from \cite{ABW}, where the measures are defined with renormalization.}

\begin{lem}\label{lem:prok}
For $r\in\N$, let $\bn = (\rn_1,\ldots,\rn_r)$ be an exchangeable random vector of non-negative real numbers, and let $\rR_r\in_u\cR_r$. If $|\bn|_1\to\infty$ and $\frac{|\bn|_2}{|\bn|_1}\to0$ as $r\to\infty$, then $d_\rP\left(\frac{1}{|\bn|_1}\cdot \mu_{\rR_r}^{\bn},~\frac{1}{r}\cdot \mu_{\rR_r}\right) = o\left(r^{1/4}\right)$ with $1-o(1)$ probability, where $d_\rP$ is the Prokhorov distance on $\rR_r$.
\end{lem}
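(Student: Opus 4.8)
Following \citep[Lemma 5.3, Corollaries 6.1 and 6.2]{ABW}, I would argue as follows. The assertion is equivalent to $r^{-1/4}\,d_\rP\bigl(|\bn|_1^{-1}\mu_{\rR_r}^{\bn},\,r^{-1}\mu_{\rR_r}\bigr)\convp 0$, so it suffices to fix $\delta>0$ and prove that $\p{d_\rP\bigl(|\bn|_1^{-1}\mu_{\rR_r}^{\bn},\,r^{-1}\mu_{\rR_r}\bigr)>\delta r^{1/4}}\to 0$; a diagonal extraction along $\delta=\tfrac12,\tfrac13,\dots$ then produces the advertised $\varepsilon_r=o(r^{1/4})$. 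The plan rests on the deterministic observation that, whenever $B_1,\dots,B_k$ is a partition of $v(\rR_r)$ into pieces of $d_{\rR_r}$-diameter at most $\varepsilon$, testing the Prokhorov criterion against closed sets $A$ (a piece $B_j$ meeting $A$ is contained in $A^\varepsilon$) gives
\[
d_\rP\bigl(\tfrac{1}{|\bn|_1}\mu_{\rR_r}^{\bn},\,\tfrac1r\mu_{\rR_r}\bigr)\le \varepsilon+\tfrac12\sum_{j=1}^{k}\Bigl|\tfrac{1}{|\bn|_1}\mu_{\rR_r}^{\bn}(B_j)-\tfrac{|B_j|}{r}\Bigr|.
\]
I would take $\varepsilon=\tfrac12\delta r^{1/4}$ and let $B_1,\dots,B_k$ be the Voronoi-type partition induced by a minimal $(\varepsilon/2)$-net of $(v(\rR_r),d_{\rR_r})$, so that $\diam(B_j)\le\varepsilon$ and $k$ is the number of balls of radius $\varepsilon/2$ (equivalently of radius $\delta/4$ in $r^{-1/4}\rR_r$) needed to cover the space; it then remains to show that $\sum_{j}\bigl|\tfrac{1}{|\bn|_1}\mu_{\rR_r}^{\bn}(B_j)-\tfrac{|B_j|}{r}\bigr|\le\delta r^{1/4}$ with probability tending to $1$.

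Two ingredients go into this. First, a uniform bound on $k$: because $r^{-1/4}\rR_r$ converges in distribution, in the GHP and hence the GH topology, to a deterministic multiple of the Brownian map, which is almost surely compact \citep[Theorem 1.1]{ABW}, the rescaled spaces form a tight family, and transferring a finite net of the limit through a near-isometric correspondence shows that for every $\eta>0$ there is $M=M(\delta,\eta)$ with $\p{k>M}<\eta$ for all large $r$. Second, a moment bound on the fluctuation sum: conditionally on $\rR_r$ and on the decreasing rearrangement $(\rn_{(1)},\dots,\rn_{(r)})$ of $\bn$, exchangeability makes $\bn$ a uniformly random permutation of those values, so $\mu_{\rR_r}^{\bn}(B_j)=\sum_{i:\,v_i\in B_j}\rn_i$ is the sum of $|B_j|$ entries of $\bn$ drawn without replacement, with conditional mean $\tfrac{|B_j|}{r}|\bn|_1$ and conditional variance at most $\tfrac{|B_j|}{r}|\bn|_2^2$. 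Hence the conditional expectation of $\bigl|\tfrac{1}{|\bn|_1}\mu_{\rR_r}^{\bn}(B_j)-\tfrac{|B_j|}{r}\bigr|$ is at most $\sqrt{|B_j|/r}\cdot(|\bn|_2/|\bn|_1)$, and summing over $j$ and applying Cauchy--Schwarz with $\sum_j|B_j|=r$ bounds the conditional expectation of the fluctuation sum by $\sqrt{k}\,|\bn|_2/|\bn|_1$.

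Combining the two: on the event $\{k\le M\}$, of probability at least $1-\eta$ for large $r$, the fluctuation sum has conditional mean at most $\sqrt{M}\,|\bn|_2/|\bn|_1$, which is $o(r^{1/4})$ since $|\bn|_2/|\bn|_1\to 0$; a conditional Markov inequality then makes the fluctuation sum exceed $\delta r^{1/4}$ with probability $o(1)$, whence $\p{d_\rP\bigl(|\bn|_1^{-1}\mu_{\rR_r}^{\bn},r^{-1}\mu_{\rR_r}\bigr)>\delta r^{1/4}}\le\eta+o(1)$, and letting $\eta\to 0$ finishes the argument. The routine parts are the closed-set verification of the displayed inequality and the first and second moment estimates for sampling without replacement. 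The step I expect to be the genuine obstacle — and essentially the only place where planar-map theory enters — is the uniform control of the covering number $k$ at scale $\varepsilon/2=\delta r^{1/4}/4$: one must upgrade the distributional GH convergence of $r^{-1/4}\rR_r$ into a covering-number tightness statement that can be run simultaneously with the exchangeability computation, and one must check that conditioning on $\rR_r$ does not destroy the exchangeability of $\bn$ — which is automatic in our applications, where $\bn$ is assembled from data that is exchangeable conditionally on $\rR_r$.
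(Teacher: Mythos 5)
Your proposal is correct, and its skeleton coincides with the paper's: the paper proves only the conditional concentration inequality for $\sum_{\{i:v_i\in V\}}\rn_i$ and then defers the remaining work (partition of $\rR_r$ into pieces of diameter $o(r^{1/4})$, tightness of the number of pieces via the GH convergence of $r^{-1/4}\cdot\rR_r$ to the compact Brownian map, and the assembly into a Prokhorov bound) to the machinery of \citep[Lemma 6.3, Corollaries 7.1 and 7.2]{ABW}, which is exactly the net argument you reconstruct explicitly. The one genuine difference is the concentration input: the paper uses a Hoeffding-type exponential bound for exchangeable sums conditional on $|\bn|_2$, namely $\p{|\,|\bn|_1^{-1}\mu_{\rR}^{\bn}(V)-r^{-1}\mu_{\rR}(V)|>2t/|\bn|_1\mid|\bn|_2}\le 2e^{-2t^2/|\bn|_2^2}$, whereas you use only the first two moments of sampling without replacement together with Cauchy--Schwarz and a conditional Markov inequality. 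Your route is more elementary and suffices precisely because the covering number $k$ at scale $\delta r^{1/4}$ is tight, so no union bound over many sets is needed; the exponential bound would become necessary only if one wanted quantitative rates or had to beat a growing number of test sets. You are also right to flag the two delicate points, and both are handled the same way in the paper: the covering-number tightness is exactly \citep[Corollary 7.1]{ABW}, and the requirement that $\bn$ remain exchangeable conditionally on $\rR_r$ is implicit in the paper's proof as well (its Hoeffding step likewise conditions on an $\rR_r$-measurable set $V$) and is satisfied in the application in \refP{prop:uan}.
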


\begin{proof}
Fix $r\in\N$, and write $\rR = \rR_r$, for readability. List the vertices of $\rR$ as $v_1,\ldots,v_r$ in the $<_\rR$-order. It suffices to show that, for any $V\subset v(\rR)$ and for any $t>0$,
\begin{equation}\label{in:prok}
\p{\Big\vert \frac{1}{|\bn|_1}\cdot \mu_{\rR}^\bn(V) - \frac{1}{r}\cdot \mu_{\rR}(V) \Big\vert > \frac{2t}{|\bn|_1}~\bigg\vert~|\bn|_2} \le 2 \exp\left(-\frac{2t^2}{|\bn|_2^2}\right).
\end{equation}
Assuming that (\ref{in:prok}) holds, \refL{lem:prok} follows in a similar way as \citep[Corollary 7.2]{ABW} follows from \citep[Lemma 6.3 and Corollary 7.1]{ABW}. Now we turn to proving (\ref{in:prok}). Note that $\mu_{\rR}(V) = |V|$, and that $
\E{\sum_{\{i:v_i\in V\}} \rn_i~\Big\vert~|\bn|_1}
 = |\bn|_1\cdot \frac{|V|}{r}$. Then by a Hoeffding-type bound (see \citep[Theorem 2.5]{M}),
\begin{align*}
\p{\Big\vert \frac{1}{|\bn|_1}\cdot \mu_{\rR}^\bn(V) - \frac{1}{r}\cdot \mu_{\rR}(V) \Big\vert > \frac{2t}{|\bn|_1}~\bigg\vert~|\bn|_2}
=&~ \p{\Big\vert \sum_{\{i:v_i\in V\}} \rn_i -|\bn|_1\cdot \frac{|V|}{r} \Big\vert >2t~\bigg\vert~|\bn|_2}\\
\le&~ 2 \exp\left( - \frac{2t^2}{|\bn|_2^2} \right).\qedhere
\end{align*}
\end{proof}

\begin{proof}[{\bf Proof of \refP{prop:uan}}]
Fix $\rQ_n\in_u\cQ_{n,r(n)}$. Write $d_n$ for the distance on any induced submap of $\rQ_n$, let $R^+_n=R^+(\rQ_n)$ and $R_n = R(\rQ_n)$. Root $R_n$ at the $\prec_{\rQ_n}$-minimal edge, and write the resulting rooted map as $\rR_n$. Then list the vertices of $\rR_n$ as $v_1,\ldots,v_{r(n)}$ in the $<_{\rR_n}$-order, noting that $|v(R_n)|=r(n)$. Let $\cO_n$ be the set of components of $R^+_n - v(R_n)$. For each $v\in v(R_n)$, let $
C_v=\bigcup\left\{v(G):G\in \cO_n,d_{n}(G,v)=1\right\} ~\bigcup~\{v\}$ and $\bn =\left(|v(C_{v_i})|: 1\le i\le r(n)\right)$. Note that $\mu_{\rR_n}^{\bn}(v) = \mu_{R_n^+}(C_v)$ for $v\in v(R_n)$. Then let
 \[
 \overline{\bR}_n=\left(R_n,~\frac{1}{k_n}\cdot d_n,~\rho_{\rQ_n},~\frac{1}{|v(R_n^+)|}\cdot \mu_{\rR_n}^{\bn}\right),~ \widehat{\bR}_n^+=\left(R_n^+,~\frac{1}{k_n}\cdot d_n,~\rho_{\rQ_n},~\frac{1}{|v(R_n^+)|}\cdot \mu_{R_n^+}\right).
 \]
It follows from \refFt{fact:ghp} that $\dghp^\star\left(\overline{\bR}_n, \widehat{\bR}_n^+ \right)
\le \frac{1}{k_n} \max_{G\in\cO_n}(\diam(G)+1)$. By \refC{diam} and by the fact that $k_n=\Theta(r(n)^{1/4})$, $\frac{1}{k_n} \max_{G\in\cO_n}\diam(G)\convp0$. Hence, 
\begin{equation}\label{ghpbd1}
\dghp^\star\left( \overline{\bR}_n, \widehat{\bR}_n^+ \right)\convp0.
\end{equation}

Furthermore, we claim that
\begin{equation}\label{eq:claim}
\max_{v\in v(R_n)}|C_v|=o(r(n))
\end{equation}
 with $1-o(1)$ probability; this claim is proven in the end of this proof. Note that $|\bn|_1 = \sum_{v\in v(R_n)} |C_v| > r(n)$, and that $r(n)\to\infty$ with $n$. Then (\ref{eq:claim}) leads to $\frac{|\bn|_2}{|\bn|_1}= \frac{\left(\sum_{v\in v(R_n)} |C_v|^2\right)^{1/2}}{\sum_{v\in v(R_n)} |C_v|} \le \left(\frac{\max_{v\in v(R_n)} |C_v|}{\sum_{v\in v(R_n)} |C_v|}\right)^{1/2} = o(1)$ with $1-o(1)$ probability. This verifies the assumptions of \refL{lem:prok}. It follows from \refL{lem:prok} that $ d_\rP\left(\frac{1}{|\bn|_1}\cdot \mu_{\rR_n}^{\bn},~ \frac{1}{r(n)}\cdot \mu_{R_n}\right) = o\left(r(n)^{1/4}\right) = o\left(k_n\right)$ with $1-o(1)$ probability, where $d_\rP$ is the Prokhorov distance on $R_n$. Next, let $\widehat{\bR}_n =\left(R_n,~\frac{1}{k_n}\cdot d_n,~\rho_{\rQ_n},~\frac{1}{r(n)}\cdot \mu_{R_n}\right)$. Note that $\overline{\bR}_n$ and $\widehat{\bR}_n$ have the same metric structure but with different measures, and that $|v(R^+_n)|=|\bn|_1$. Then by \refFt{fact:ghp0},
\begin{equation}\label{ghpbd2}
\dghp^\star\left(\overline{\bR}_n,\widehat{\bR}_n\right) 
\le \frac{1}{k_n}\cdot  d_\rP\left(\frac{1}{|\bn|_1}\cdot \mu_{\rR_n}^{\bn},~\frac{1}{ r(n)}\cdot \mu_{R_n}\right)
\convp0.
\end{equation}
As noted in \refS{sec:outline}, $\widehat{\bR}_n\convdist\bm_\infty$ for the pointed GHP topology. Combined with (\ref{ghpbd1}) and (\ref{ghpbd2}), we thus have $\widehat{\bR}_n^+ \convdist \bm_\infty$ for the pointed GHP topology, establishing (\ref{conv:uan}).

To prove (\ref{eq:claim}), note $
\max_{v\in v(R_n)}|C_v|\le \max_{0\le i\le 2r(n)-4}\ell_i(\rQ_n)  \cdot \max_{v\in v(R_n)}\deg_{R_n}(v) \cdot Y_{(2)}(\rQ_n)+1$. By \refC{Rcor2}, $Y_{(2)}(\rQ_n) \le r(n)^{5/6}$ with $1-o(1)$ probability. Moreover, it follows from \refL{lem:vtxdegree} that $\p{\max_{v\in v(R_n)}\deg_{R_n}(v)\ge 3\ln r(n)} =O\left(e^{-3\ln r(n)} r(n)^{5/3}\right)=O(r(n)^{-1})$. Finally, by \refL{lem:split}, $ \max_{0\le i\le 2r(n)-4}\ell_i(\rQ_n) \le 5 \ln r(n)$ with $1-o(1)$ probability. Together with the previous bounds, $\max\limits_{v\in v(R_n)}|C_v|= O\left((\ln r(n))^2r(n)^{5/6}\right) = o(r(n))$ with $1-o(1)$ probability, establishing (\ref{eq:claim}).
\end{proof}

\section{Proofs of the Main Theorems}\label{sec:pfthm1}

Given pointed metric measure spaces $\bX=(X,d,x,\mu)$ and $\bY=(Y,d',y,\mu')$, let $Z = (X\setminus\{x\})\cup Y$, and define a distance $\delta$ on $Z$ by setting, for $p,q\in Z$,
\[
\delta(p,q) = \begin{cases} d(p,q) & \mbox{ if } p,q\in X\\
d'(p,q) & \mbox{ if } p,q \in Y\\
d(p,x) + d'(y,q) &\mbox{ if } p\in X, q\in Y
\end{cases}.
\]
Then define a measure $\nu$ on the Borel sets of $(Z,\delta)$ by setting $\nu(V) = \mu(V\cap X\setminus\{x\}) + \mu'(V\cap Y)$. Let $\bZ(\bX,\bY)= (Z,\delta,y,\nu)$. In words, $\bZ(\bX,\bY)$ is the pointed metric measure space obtained from $\bX$ and $\bY$ by identifying the distinguished points of $\bX$ and $\bY$.

Recall that given a pointed metric measure space $\bV=(V,d,o,\nu)$, $B_r=B_r(\bV) = \{w\in V: d(w,o)\le r\}$, and $\bB_r(\bV) = \left(B_r,d,o,\nu\big\vert_{B_r}\right)$. The {\em local GHP distance} between two pointed metric measure spaces $\bV=(V,d,v,\mu)$ and $\bW=(W,d',w,\mu')$ is $\dlghp\left(\bV,\bW \right)
= \sum_{r=1}^\infty \frac{\min\left\{\dghp^\star\left(\bB_r(\bV),\bB_r(\bW)\right),1\right\}}{2^r}$.

\begin{lem}\label{localgh}
Given pointed metric measure spaces $(\bX_n:1\le n\le \infty),(\bY_n:1\le n\le \infty)$, if $\dghp^\star\left(\bX_n,\bX_\infty\right)\to0$ and $\dlghp\left(\bY_n,\bY_\infty\right)\to0$, $\dlghp\left(\bZ(\bX_n,\bY_n),\bZ(\bX_\infty,
\bY_\infty)\right)\to0 $.
\end{lem}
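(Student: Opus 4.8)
The guiding observation is that, since $\bZ(\bX,\bY)$ is rooted at the distinguished point $y$ of $\bY$ and the point $x$ of $\bX$ has been glued to $y$, a point $p$ lying in the $\bX$--part of $\bZ(\bX,\bY)$ satisfies $\delta(p,y)=d(p,x)$, whereas a point $q$ in the $\bY$--part satisfies $\delta(q,y)=d'(q,y)$. Consequently, for every $r\ge 0$ the closed ball of radius $r$ about the root of $\bZ(\bX,\bY)$ splits as
\[
\bB_r\big(\bZ(\bX,\bY)\big)=\bZ\big(\bB_r(\bX),\,\bB_r(\bY)\big)
\]
as pointed metric measure spaces: the metric $\delta$ and the measure $\nu$ of $\bZ(\bX,\bY)$, restricted to this ball, coincide with those produced by gluing $\bB_r(\bX)$ to $\bB_r(\bY)$. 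Hence it suffices to control $\dghp^\star\big(\bZ(\bB_r(\bX_n),\bB_r(\bY_n)),\bZ(\bB_r(\bX_\infty),\bB_r(\bY_\infty))\big)$ for each fixed $r$ and then sum the resulting bounds against the weights $2^{-r}$.

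The next ingredient is that gluing is Lipschitz for the pointed GHP distance: there is a universal constant $C$ with
\[
\dghp^\star\big(\bZ(\bU_1,\bV_1),\bZ(\bU_2,\bV_2)\big)\le C\big(\dghp^\star(\bU_1,\bU_2)+\dghp^\star(\bV_1,\bV_2)\big)
\]
for all pointed metric measure spaces $\bU_1,\bU_2,\bV_1,\bV_2$. To prove this I would take correspondences (equivalently, near-optimal isometric embeddings into common spaces) realizing $\dghp^\star(\bU_1,\bU_2)$ and $\dghp^\star(\bV_1,\bV_2)$, each of small distortion, each containing the pair of distinguished points, and each carrying a measure coupling concentrated near the diagonal; gluing these along the (matched) distinguished points yields a correspondence between $\bZ(\bU_1,\bV_1)$ and $\bZ(\bU_2,\bV_2)$ whose distortion is bounded by the sum of the two distortions --- the matching of distinguished points is what absorbs the cross terms --- and whose coupling is the sum of the two couplings. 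This is a routine adaptation of the non-pointed estimates \citep[Facts 6.3 and 6.4]{ABW}.

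It then remains to pass to the limit inside each ball. The hypothesis $\dlghp(\bY_n,\bY_\infty)\to 0$ gives, term by term, $\dghp^\star(\bB_r(\bY_n),\bB_r(\bY_\infty))\to 0$ for every $r$. For $\bX$ one invokes the standard fact (see \cite[Section 2.3]{ADH}) that pointed GHP convergence $\dghp^\star(\bX_n,\bX_\infty)\to 0$ forces $\dghp^\star(\bB_r(\bX_n),\bB_r(\bX_\infty))\to 0$ at every radius $r$ at which $t\mapsto\bB_t(\bX_\infty)$ is continuous, that is, whenever $\bX_\infty$ places no mass on the sphere $\{v:d(v,o)=r\}$; this covers all but countably many $r$, and in the settings to which the lemma is applied it covers every integer $r$, since the relevant limit spaces (the Brownian map and the Brownian plane) almost surely carry no mass on any fixed distance sphere from their distinguished point. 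Combining these two ball convergences with the Lipschitz bound shows $\dghp^\star\big(\bB_r(\bZ(\bX_n,\bY_n)),\bB_r(\bZ(\bX_\infty,\bY_\infty))\big)\to 0$ for each $r$, and since each summand of $\dlghp(\bZ(\bX_n,\bY_n),\bZ(\bX_\infty,\bY_\infty))$ is dominated by $2^{-r}$, dominated convergence gives $\dlghp(\bZ(\bX_n,\bY_n),\bZ(\bX_\infty,\bY_\infty))\to 0$.

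I expect the ball-continuity step to be the main obstacle: passing from convergence of the full spaces to convergence of their radius-$r$ balls genuinely fails at radii carrying boundary mass, so the argument must either be confined to good radii or --- as here --- lean on the atomlessness of the distance profile of the limit space from its distinguished point. The Lipschitz property of gluing is the most computational part, but is essentially bookkeeping once the right correspondences and couplings are written down.
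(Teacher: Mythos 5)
Your proposal is correct and follows essentially the same route as the paper, whose entire proof consists of the ball-wise inequality $\dghp^\star\left(\bB_r(\bZ(\bX_n,\bY_n)),\bB_r(\bZ(\bX_\infty,\bY_\infty))\right)\le \dghp^\star\left(\bB_r(\bX_n),\bB_r(\bX_\infty)\right)+\dghp^\star\left(\bB_r(\bY_n),\bB_r(\bY_\infty)\right)$ followed by summation against the weights $2^{-r}$. Your remark about continuity radii is a genuine refinement rather than mere caution: the paper silently passes from $\dghp^\star(\bX_n,\bX_\infty)\to0$ to $\dghp^\star(\bB_r(\bX_n),\bB_r(\bX_\infty))\to0$, which can fail at radii where $\bX_\infty$ carries boundary mass, so the lemma really does need either that caveat in its statement or, as in the application, the almost-sure absence of mass on distance spheres of $\bm_\infty$.
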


\begin{proof}
Write $\bZ_n = \bZ(\bX_n,\bY_n)$ and $\bZ_\infty =\bZ(\bX_\infty,\bY_\infty)$. Let $r\ge0$. For $n\in\N$, we have $
\dghp^\star\left(\bB_r(\bZ_n),\bB_r(\bZ_\infty)\right)
\le \dghp^\star\left(\bB_r(\bX_n),\bB_r(\bX_\infty)\right) + \dghp^\star\left(\bB_r(\bY_n),
\bB_r(\bY_\infty)\right)$.
By assumption, the right hand side tends to $0$.
\end{proof}

As discussed in \refS{sec:main}, all graphs are endowed with edge lengths and viewed as length spaces. Recall that 
$k_n = \left(\frac{40\cdot r(n)}{21}\right)^{1/4}$, and the following notations from Sections~\ref{sec:graphintro} and~\ref{sec:outline}: given $\rQ\in\cQ_n$, $L=P_{(1)}(\rQ)$ is the largest submap pendant to $R(\rQ)$, $\widehat{\bL}(\rQ) = \left(L,~\frac{1}{k_n}\cdot d_L,~\rho_{\rQ},~\frac{8}{9k_n^4}\cdot \mu_L\right)$, $R^+=R^+(\rQ)= \rQ -( v(L)\setminus\{\rho_{\rQ}\})$, and $\widehat{\bR}^+(\rQ)= \left(R^+,~\frac{1}{k_n}\cdot d_{R^+},~\rho_\rQ,~\frac{1}{|v(R^+)|}\cdot \mu_{R^+}\right)$. The following lemma relies on \refP{planeghp}.

\begin{lem}\label{localgh0}
For $\rQ_n\in_u\cQ_{n,r(n)}$, $
\left(\widehat{\bR}^+(\rQ_n),\widehat{\bL}(\rQ_n)\right) \convdist (\bm_\infty,\mathbfcal{P})$ for the local GHP topology, where $\bm_\infty$ and $\mathbfcal{P}$ are independent.
\end{lem}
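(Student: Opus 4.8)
The plan is to establish the joint convergence by first obtaining each marginal convergence separately and then using the asymptotic independence of the two pieces, which comes from the fact that conditioning on the relevant sizes makes the two structures independent.

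\smallskip

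First I would recall from the proof outline that $R_n := R(\rQ_n)$, conditioned on having $r(n)$ vertices, is a uniform $2$-connected quadrangulation, and that $L_n := L(\rQ_n) = P_{(1)}(\rQ_n)$, conditioned on its size $|v(L_n)|$, is a uniform quadrangulation of that size. The marginal convergence $\widehat{\bR}^+(\rQ_n) \convdist \bm_\infty$ for the pointed GHP topology is exactly \refP{prop:uan}. For the second marginal, by \refC{Rcor} we have $|v(L_n)| = Y_{(1)}(\rQ_n)+1 \ge m(n)/(\ln n)^2 \to \infty$ with high probability; since conditionally on its size $L_n$ is a uniform quadrangulation, \citep[Theorem 2]{CLG}, in the form extended to the local GHP topology in \refP{planeghp}, gives $\widehat{\bL}(\rQ_n) \convdist \mathbfcal{P}$ for the local GHP topology. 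Here one uses that the scaling constant $k_n = (40 r(n)/21)^{1/4}$ is, up to the fixed multiplicative factor governing the Brownian plane's scale invariance, the correct power of $|v(L_n)|$ on the high-probability event that $|v(L_n)| = \Theta(n)$ — more precisely, scale invariance of $\mathbfcal{P}$ absorbs the ratio $(|v(L_n)|/r(n))^{1/4}$, so the precise size of $L_n$ does not matter in the limit.

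\smallskip

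The joint statement then follows from a routine conditioning argument: condition on the pair $(|\cF(\rQ_n)|, (Y_i(\rQ_n))_{i})$ recording the number and sizes of all pendant submaps. Given this data, the identity of the largest submap $L_n$ is determined, and the laws of $R^+_n$ (the root block with its non-largest decorations) and of $L_n$ are conditionally independent — the former depends only on $R_n$ and the arrangement of the small submaps, the latter only on the internal structure of the single block $L_n$, and conditionally on all the sizes these are built by independent uniform choices (this is visible from the construction in the proof of \refL{quadcount}). On the high-probability event furnished by Corollaries~\ref{Rcor} and~\ref{Rcor2} and \refP{prop:Nsize}, the conditional law of $\widehat{\bR}^+(\rQ_n)$ converges to that of $\bm_\infty$ and the conditional law of $\widehat{\bL}(\rQ_n)$ converges to that of $\mathbfcal{P}$, uniformly enough over the conditioning; since the two are conditionally independent, the joint law converges to the product law $(\bm_\infty,\mathbfcal{P})$ with $\bm_\infty \perp \mathbfcal{P}$. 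Dominated convergence over the conditioning completes the argument.

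\smallskip

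The main obstacle I anticipate is making the conditional independence argument genuinely rigorous in the metric-space setting: one must be careful that the $\prec_{\rQ_n}$-order used to root $L_n$ and $R_n$ does not secretly couple the two structures, and that the convergence of the conditional laws is uniform over the relevant range of sizes (so that averaging over the conditioning is legitimate). A clean way around this is to note that conditionally on the multiset of sizes, $L_n$ is an independent uniform quadrangulation of its size and $R^+_n$ is an independent object, so one may test against bounded continuous functionals $F, G$ of the two local-GHP spaces and show $\E{F(\widehat{\bR}^+(\rQ_n)) G(\widehat{\bL}(\rQ_n))} \to \E{F(\bm_\infty)}\,\E{G(\mathbfcal{P})}$ by first conditioning, applying the two marginal convergences (which hold along the full sequence because the conditioning event has probability $1-o(1)$ and the sizes are $\Theta(n)$ resp.\ $\Theta(n)$ with high probability), and then taking expectations. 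The independence in the limit is then automatic from the factorization under the conditional expectation.
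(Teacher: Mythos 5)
Your proposal follows essentially the same route as the paper's proof: the two marginal convergences are taken from \refP{prop:uan} and \refP{planeghp} (via \refC{Rcor} and the scale invariance of $\mathbfcal{P}$), and the joint convergence with independence in the limit is deduced from the conditional independence of $R^+(\rQ_n)$ and $L(\rQ_n)$ given the pendant-submap sizes. The paper states this last step in a single sentence, whereas you flesh out the functional-factorization argument; the content is the same.
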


\begin{proof}
By \refP{prop:uan}, $\widehat{\bR}^+(\rQ_n)\convdist \bm_\infty$ for the pointed GHP topology, and it is easily seen that the convergence also holds for the local GHP topology. Moreover, we show in \refP{planeghp} that $\widehat{\bL}(\rQ_n)\convdist \mathbfcal{P}$ for the local GHP topology. Finally, the independence between $\bm_\infty$ and $\mathbfcal{P}$ follows from the conditional independence of $R^+(\rQ_n)\setminus\{\rho_{\rQ_n}\}$ and $P_{(1)}(\rQ_n)$ given their sizes.
\end{proof}

\begin{proof}[{\bf Proof of Theorem~\ref{thm2}}]
It follows from \refL{localgh0} and the Skorokhod representation theorem that there exists a probability space where $\left(\widehat{\bR}^+(\rQ_n),\widehat{\bL}(\rQ_n)\right) \to (\bm_\infty,\mathbfcal{P})$ almost surely. \refL{localgh} then yields that in this space we have $\bZ\left(\widehat{\bR}^+(\rQ_n),\widehat{\bL}(\rQ_n)\right)  \to \bZ\left(\bm_\infty,\mathbfcal{P}\right)$ almost surely, which implies convergence in distribution. It is easily seen that 
\begin{align*}
&\bZ\left(\widehat{\bR}^+(\rQ_n),\widehat{\bL}(\rQ_n)\right)
 \eqdist \left(\rQ_n,~ \frac{1}{k_n}\cdot d_{\rQ_n},\rho_{\rQ_n}, \frac{8}{9k_n^4}\cdot \mu_{L(\rQ_n)} + \frac{1}{|v(R^+(\rQ_n))|}\cdot (\mu_{R^{+}(\rQ_n)}-\delta_{\rho_{\rQ_n}})\right) .
\end{align*}
In the above, $(\mu_{R^{+}(\rQ_n)}-\delta_{\rho_{\rQ_n}})$ can be replaced by $\mu_{R^{+}(\rQ_n)}$ without affecting the convergence in distribution since $|v(R^+(\rQ_n))|\to\infty$. Finally, from the definitions of $\mathbfcal{S}$ and $\mathbfcal{P}$ given in \refApp{sec:minibus}, and the definition of $\bm_\infty$ in \refApp{sec:planeghp}, we have $\bZ\left(\bm_\infty,\mathbfcal{P}\right)\eqdist \mathbfcal{S}$. Briefly: the equivalence of metric structure is clear, and the measure of $\mathbfcal{S}$, defined as $(\pi_1\circ p_{(1)})_*\mathrm{Leb}_{[0,1]} + (\pi_\infty\circ p_\infty)_*\mathrm{Leb}_\R$, is equal to the measure of $\bZ\left(\bm_\infty,\mathbfcal{P}\right)$, since the point in $\bm_\infty$ which is glued to the distinguished point of $\mathbfcal{P}$ has measure $0$ almost surely. 
\end{proof}

Theorem~\ref{thm1} follows from \refP{prop:uan1} in the same way as Theorem~\ref{thm2} follows from \refP{prop:uan}, so we omit the proof.

\appendix

\section{The Brownian Plane, with and without Minbus}\label{sec:minibus}

In the remaining paper, for $s,t\in\R$, we write $s\wedge t = \min\{s,t\}$ and $s\vee t = \max\{s,t\}$.

\subsection{The Brownian Plane with Minbus}\label{sec:planeminbus}

The Brownian plane with minbus, $\mathbfcal{S}$, is the quotient space obtained from gluing the root of the Brownian plane to a random point of the Brownian map. We present a construction of $\mathbfcal{S}$ in this subsection, partly because it explicitly describes the infinite measure of $\mathbfcal{S}$.\footnote{This may also help comprehension (especially \refApp{sec:planeghp}) if the reader has not seen the construction of the Brownian plane/map.}

Let $\be = (\be_t)_{t\in[0, 1]}$ be a standard Brownian excursion. Define a process $Z' = (Z'_t)_{t\in[0,1]}$ such that, conditioned on $\be$, $Z'$ is a centred Gaussian process with covariance $\E{Z'_s Z'_t~|~\be} = \min_{r\in [s\wedge t,s\vee t]} \be_r
$ for any $s,t\in[0,1]$. Then shift the time index of the pair $(\be_t, Z'_t)_{t\in[0,1]}$ so that the ``new $Z'_0$" is minimal among $(Z'_t)_{t\in[0,1]}$. More precisely, by \citep[Proposition 2.5]{LGW}, there exists an almost surely unique time $s_*\in [0,1]$ such that $Z'_{s_*} = \min \{Z'_t: t\in [0,1]\}$. Now, for any $t\in [0,1]$, let $\bar{\be}_t = \be_{s_*} + \be_{s_* \oplus t} - 2\inf_{r\in [s_*\wedge s_* \oplus t,s_*\vee s_* \oplus t]} \be_r$, and $\bar{Z}'_t = Z'_{s_* \oplus t} - Z'_{s_*}$, where $s_* \oplus t = s + t$ if $s_* +t \le 1$, and $s_* \oplus t = s+t -1$ otherwise. By \cite[Theorem 1.2]{LGW}, $(\bar{\be}_t,\bar{Z}'_t)_{t\in [0,1]}$ has the same distribution as $(\be_t,Z'_t)_{t\in[0,1]}$ conditioned on $\min_{t\in [0,1]} Z'_t \ge 0$. The Continuum Random Tree (CRT) coded by $\bar{\be}$ may be viewed as the CRT coded by $\be$ re-rooted at the vertex with minimal label, and the labels $\bar{Z}'$ on the CRT coded by $\bar{\be}$ are derived from $Z'$ by subtracting the minimal label; see \citet[Section 2.3]{BLG}. 

Next, let $R=(R_t)_{t\ge 0}$ and $R'= (R'_t)_{t\ge 0}$ be two independent $3$-dimensional Bessel processes started from $0$, independent of $\be$. Define $\rR=(\rR_t)_{t\in\R}$ by setting
\begin{equation}\label{eq:bessel}
\rR_t = \begin{cases}R_{t} &\mbox{ if } t\ge 0\\
R'_{-t} &\mbox{ if } t<0 \end{cases}.
\end{equation}
Then for any $s,t\in\R$, let
\[
\overline{st} =\begin{cases} [s\wedge t, s\vee t] &\mbox{ if } st\ge 0\\
(-\infty,s\wedge t]\cup [s\vee t,\infty) &\mbox{ if }  st< 0\end{cases},
\]
and define a process $Z= (Z_t)_{t\in\R}$ such that, conditioned on $\rR$, $Z$ is the centred Gaussian process with covariance
\begin{equation}\label{eq:Z}
\E{Z_s Z_t~|~\rR} = \inf_{r\in \overline{st}} \rR_r, ~s,t\in\R.
\end{equation}
Let ${\rX} = ({\rX}_t)_{t\in\R}$ be a concatenation of $\bar{\be}$ and $\rR$ by setting
\[
{\rX}_t = \begin{cases} \bar{\be}_t &\mbox{ if } 0\le t\le 1\\
R_{t-1} &\mbox{ if } t>1\\
R'_{-t} &\mbox{ if } t<0\end{cases}. \mbox{ Similarly, set }
{\rW}_t = \begin{cases} \bar{Z}'_t &\mbox{ if } 0\le t\le 1\\
Z_{t-1} &\mbox{ if } t>1 \\
Z_{t} &\mbox{ if } t<0\end{cases}.
\]
For any $s,t\in\R$, define
\[
\widehat{st} = \begin{cases}
(-\infty, s\wedge t] \cup [s \vee t,\infty) &\mbox{ if } st<0 \mbox{ and } 
s\vee t>1\\
[s \wedge t,s \vee t] &\mbox{ otherwise }
\end{cases}.
\]
Now, we define a random pseudo-metric $d_{\rX}$ on $\R^2$ by setting, for any $s,t\in\R$, $d_{\rX}(s,t) = \rX_s +\rX_t - 2 \inf_{r\in \widehat{st}} \rX_r$. Write $s\sim_{\rX} t$ if $d_{\rX}(s,t)=0$, and let $\cT=\R/\sim_{\rX}$. Informally, we may view $\cT$ as obtained from gluing the root of Aldous' CRT at the root of infinite Brownian tree. It is easily seen that $\rW_0=0$, $\E{(\rW_s - \rW_t)^2~|~\rX} = d_\rX(s,t)$, and $\rW$ has a modification with continuous paths (we shall view $\rW$ as such in the sequel). Then $d_\rX(s,t)=0$ implies $\rW_s = \rW_t$ almost surely, so we may view $\rW$ as indexed by $\cT$.

Furthermore, for any $s,t\in\R$, let
\begin{equation}\label{eq:Dcirc}
D^\circ(s,t) =\rW_s +\rW_t - 2\inf_{r\in[s\wedge t, s\vee t]} \rW_r.
\end{equation}
Write $p: \R\to{\cT}$ for the canonical projection, then we extend the definition of $D^\circ$ to ${\cT} \times {\cT}$ by setting, for any $a,b\in{\cT}$, $
D^\circ(a,b) = \min\left\{D^\circ(s,t): s,t\in\R, p(s) = a,p(t)=b\right\}$. Let
\begin{equation}\label{eq:D}
D(a,b) = \inf_{a_0=a,a_1,\dots,a_k=b} \sum_{i=1}^k D^\circ(a_{i-1},a_i)
\end{equation}
with the infimum taken over all choices of $k\in\N$ and of the finite sequence $a_0=a,a_1,\dots,a_k=b\in{\cT}$. It follows that $D$ is a pseudo-metric on ${\cT}$. Write $\cS = {\cT}/\{D=0\}$, and let $\rho\in\cS$ be the equivalence class of $p(0)$. Let $\pi$ be the canonical projection from $\cT$ to $\cS$, and we continue to use $D$ to denote the push-forward of $D$ by $\pi\times\pi$ to $\cS$.

Finally, write $\mathrm{Leb}_I$ for the Lebesgue measure over interval $I\subset \R$, and let $\mu= (\pi\circ p)_* \mathrm{Leb}_\R$. The {\em pointed measured Brownian plane with minbus} is the pointed metric measure space $\mathbfcal{S}\coloneqq (\cS,D,\rho,\mu)$.

\subsection{The Brownian Plane}

We quickly go over the definition of the Brownian plane from \cite{CLG}, referring the reader to that work for a full exposition.

Let $R$ and $R'$ be two independent $3$-dimensional Bessel processes started from $0$, and define $\rR$ as in (\ref{eq:bessel}). Define a random pseudo-metric $d_\rR$ on $\R^2$ by setting, for any $s,t\in\R$, $d_\rR(s,t) = \rR_s+ \rR_t - 2\inf_{r\in \overline{st}}\rR_r$. Write $s\sim_\rR t$ if $d_\rR(s,t)=0$. The quotient space $\cT_\infty \coloneqq \R/\sim_\rR$ equipped with $d_\rR$ is called the infinite Brownian tree. Conditionally given $\rR$, let $Z$ be the centred Gaussian process with covariance as in (\ref{eq:Z}). Then define $D^\circ_\infty$ similarly as $D^\circ$ in (\ref{eq:Dcirc}) with $\rW$ replaced by $Z$, and define $D_\infty$ analogously to $D$ in (\ref{eq:D}). 

Write $\cP = \cT / \{D_\infty=0\}$. Let $p_\infty:\R\to\cT_\infty$ and $\pi_\infty: \cT_\infty \to\cP$ be the canonical projections. Let $\rho_\infty\in \cP$ be the equivalence class of $p_\infty(0)$. Finally, let $\mu_\infty = (\pi_\infty \circ p_\infty)_* \mathrm{Leb}_\R$. Then write $\mathbfcal{P}= (\cP,D_\infty,\rho_\infty,\mu_\infty)$ for the pointed measured Brownian plane.

\section{Convergence to the Brownian Plane for the Local GHP Topology}\label{sec:planeghp}

In this section, we establish the convergence towards the Brownian plane in the GHP topology, extending the result of \cite{CLG} for the GH topology. 

\subsection{Scaled Brownian Map}

We elaborate a bit on the definition of the scaled Brownian map \cite{CLG}, to make this article more self-contained, but follow the notation of that paper.

Fix $\lambda>0$ in this subsection, and let $\be^\lambda=(\be^\lambda_t)_{t\in[0,\lambda^4]}$ be a Brownian excursion of lifetime $\lambda^4$. Write $\cT_{(\lambda)}$ for the scaled Brownian CRT indexed by $\be^\lambda$, and let $p_{(\lambda)}:[0,\lambda^4]\to \cT_{(\lambda)}$ be the canonical projection, sending $x\in[0,\lambda^4]$ to its equivalence class in $\cT_{(\lambda)}$. Conditionally given $\be^\lambda$, let $Z^\lambda = (Z^\lambda_t)_{0\le t\le \lambda^4}$ be the centred Gaussian process with covariance
\begin{equation}\label{eq:Zlambda}
\E{Z^\lambda_s Z^\lambda_t~\big\vert~\be^\lambda} = \min_{r\in [s\wedge t,s\vee t]} \be^\lambda_r.
\end{equation}

Furthermore, for $s,t\in[0,\lambda^4]$ with $s\le t$, we let $D_\lambda^\circ(s,t) = D_\lambda^\circ(t,s)= Z^\lambda_s + Z^\lambda_t - 2 \max\left\{
\min_{r\in[s,t]} Z^\lambda_r,
\min_{r\in [t,\lambda^4]\cup[0,s]} Z^\lambda_r
\right\}$. Now extend the definition of $D_\lambda^\circ$ to $\cT_{(\lambda)}\times \cT_{(\lambda)}$ by setting, for any $a,b\in\cT_{(\lambda)}$, $D_\lambda^\circ(a,b) = \min\left\{ D_\lambda^\circ(s,t):s,t\in [0,\lambda^4], p_{(\lambda)}(s) = a,p_{(\lambda)}(t)=b\right\}$, and $D^*_\lambda(a,b) = \inf_{a_0=a,a_1,\ldots,a_p=b}\sum_{i=1}^p D_\lambda^\circ(a_{i-1},a_i)$, where the infimum is over all choices of $p\in\N$ and of the finite sequence $a_0=a,a_1,\ldots,a_p=b$ in $\cT_{(\lambda)}$. It follows that $D^*_\lambda$ is a pseudo-metric on $\cT_{(\lambda)}$. Write $Y^\lambda = \cT_{(\lambda)}/\{D^*_\lambda = 0\}$, and let $\rho_\lambda\in Y^\lambda$ be the equivalence class in $Y^\lambda$ of $p_{(\lambda)}(0)$. Let $\pi_\lambda$ be the canonical projection from $\cT_{(\lambda)}$ to $Y^\lambda$, and we continue to use $D^*_\lambda$ to denote the push-forward of $D^*_\lambda$ by $\pi_\lambda\times \pi_\lambda$ to $Y^\lambda$. 

Finally, let $\mu_\lambda = (\pi_\lambda\circ p_{(\lambda)})_*\mathrm{Leb}_{[0,\lambda^4]}$. The {\em pointed measured scaled Brownian map} is $\bY^\lambda \coloneqq(Y^\lambda,D^*_\lambda,\rho_\lambda,\mu_\lambda)$. Taking $\lambda=1$, $\cT_{(1)}$ is the Brownian CRT. Write $\bm_\infty = (Y^1,D^*_1,\rho_1,\mu_1)$ for the pointed measured Brownian map. For any $\lambda>0$, write $
\lambda\cdot \bm_\infty = (Y^1,~\lambda\cdot D^*_1,~\rho_1,~\lambda^4\cdot \mu_1)$.

\subsection{A Nice Event}\label{sec:event}

\citep[Proposition 4]{CLG} defines an event on which, $\lambda\cdot \bm_\infty$ and $\mathbfcal{P}$ have the same {\em local metric} structure. In \refP{planeghp} below, we show that on this event, $\lambda\cdot \bm_\infty$ and $\mathbfcal{P}$ also have the same local structure with their endowed measures. The purpose of the current subsection is to describe this event.

Fix $A>1$, $\alpha>0$ and $\lambda>(2\alpha)^{1/4}$. Let $\be^\lambda$ be a copy of Brownian excursion of lifetime $\lambda^4$, and let $\rR=(R,R')$ be copies of independent $3$-dimensional Bessel processes. Next, let $Z$ and $Z^\lambda$ be centred Gaussian processes with covariances, respectively, given in (\ref{eq:Z}) and (\ref{eq:Zlambda}). Furthermore, for every $x\ge0$, let $\gamma_\infty(x) = \sup\{t\ge0:R_t=x\}$.  Now define
\[
\cE_\lambda=\cE_{\alpha,\lambda}(\be^\lambda,R,R') = \left\{\be_t^\lambda = R_t \mbox{ and }
\be^\lambda_{\lambda^4-t} = R'_t, \forall t\le \alpha\right\} \cap \left\{ \min_{\alpha\le t\le \lambda^4-\alpha} \be_t^\lambda
=\inf_{t\ge \alpha} R_t \wedge \inf_{t\ge \alpha} R_t'\right\}.
\]
As in the proof of \citep[Proposition 4]{CLG}, on $\cE_\lambda$ we have $Z^\lambda_t = Z_t,~ Z^\lambda_{\lambda^4-t} = Z_{-t}, ~\forall t\in [0,\alpha]$. Then let $\cF_\lambda=\cF_{A,\alpha,\lambda}(\be^\lambda,R,R',Z^\lambda,Z)$ be the intersection of $\cE_\lambda$ with the following events: $\inf_{t\ge \alpha} R_t \wedge \inf_{t\ge \alpha} R_t' > A^4$, $\min_{0\le x\le A} Z_{\gamma_\infty(x)}<-10,~ \min_{A\le x\le A^2} Z_{\gamma_\infty(x)}<-10$, and $\min_{A^2\le x\le A^4}Z_{\gamma_\infty(x)}<-10$.

\subsection{Convergence to the Brownian Plane}

Recall that given $\rQ=(Q,e)\in\cQ$, $\mu_{Q} = \sum_{v\in v(Q)}\delta_v$. Since local GHP convergence is only stated for length spaces, we view each edge $e$ of $Q$ as an isometric copy of the unit interval $[0,1]$. We abuse notation and continue to write $(Q,d_Q)$ for the resulting length space. In this appendix, for $c>0$ write $c\cdot \bQ = \left(Q,~c\cdot d_Q,~u,~\frac{8c^4}{9}\cdot \mu_Q\right)$, where $u$ is the tail of the root edge $e$.
 
\begin{prop}\label{planeghp}
Let $(k_n\in\R_+:n\in\N)$ be such that $k_n\to\infty$ and $k_n=o(n^{1/4})$. Then for $\rQ_n\in_u\cQ_n$, $
k_n^{-1}\cdot  \bQ_n \convdist \mathbfcal{P}$ for the local GHP topology.
\end{prop}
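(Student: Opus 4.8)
The convergence in the local Gromov--Hausdorff topology is \cite[Theorem~2]{CLG}; my plan is to revisit that proof and carry the mass measure along, the one genuinely new ingredient being the behaviour of this measure on metric balls around the root.

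I would first treat the continuum statement $\lambda\cdot\bm_\infty\convdist\mathbfcal{P}$ as $\lambda\to\infty$. Recall the event $\cF_\lambda=\cF_{A,\alpha,\lambda}$ from \refS{sec:event}: by \cite[Proposition~4]{CLG}, on $\cF_\lambda$ the closed ball $B_{r_A}(\lambda\cdot\bm_\infty)$ around $\rho_\lambda$ is canonically isometric to $B_{r_A}(\mathbfcal{P})$, for a radius $r_A$ with $r_A\to\infty$ as $A\to\infty$, and one may choose $\alpha=\alpha(A)$ so that $\liminf_{\lambda\to\infty}\P(\cF_{A,\alpha(A),\lambda})\to 1$ as $A\to\infty$. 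The point I would add is that on $\cF_\lambda$ these balls agree as pointed metric \emph{measure} spaces. Indeed $\mu_\lambda=(\pi_\lambda\circ p_{(\lambda)})_*\mathrm{Leb}_{[0,\lambda^4]}$ and $\mu_\infty=(\pi_\infty\circ p_\infty)_*\mathrm{Leb}_\R$, so for $R<r_A$ the restriction of $\mu_\lambda$ to $B_R(\lambda\cdot\bm_\infty)$ is the image of the Lebesgue measure of the parameter set mapping into that ball, and similarly for $\mu_\infty$; on $\cF_\lambda$ the label and tree data encoding $\lambda\cdot\bm_\infty$ and $\mathbfcal{P}$ coincide on an initial (and, symmetrically, terminal) segment of parameters containing all those mapping into $B_{r_A}$, while the isometry of \cite[Proposition~4]{CLG} identifies these parameter sets and makes the two canonical projections agree on them. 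Hence $\dghp^\star(\bB_R(\lambda\cdot\bm_\infty),\bB_R(\mathbfcal{P}))=0$ on $\cF_\lambda$ for every $R<r_A$, so $\dlghp(\lambda\cdot\bm_\infty,\mathbfcal{P})\le 2^{-\lfloor r_A\rfloor}$ on $\cF_\lambda$; letting $A\to\infty$ and then $\lambda\to\infty$ gives $\dlghp(\lambda\cdot\bm_\infty,\mathbfcal{P})\convp 0$, in particular $\lambda\cdot\bm_\infty\convdist\mathbfcal{P}$ for the local GHP topology.

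Next I would transfer this to quadrangulations. Write $c_n=\left(\tfrac{9}{8n}\right)^{1/4}$ and $\lambda_n=(c_nk_n)^{-1}$, so $k_n^{-1}\cdot\bQ_n=\lambda_n\cdot(c_n\cdot\bQ_n)$ with $\lambda_n\to\infty$ (here the hypothesis $k_n=o(n^{1/4})$ is used), and by the Gromov--Hausdorff--Prokhorov form of the convergence of uniform quadrangulations to the Brownian map \cite{Mi,LG} we have $c_n\cdot\bQ_n\convdist\bm_\infty$ for the compact GHP topology. One cannot simply compose these two convergences because $\lambda_n\to\infty$; instead I would run the coupling of \cite[Proposition~4]{CLG} at the discrete level, on the Schaeffer-type encoding functions of $\rQ_n$, whose rescalings by $k_n$ converge on compact sets to the Bessel-and-label data defining $\mathbfcal{P}$, exactly as in \cite{CLG}. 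On the corresponding good event, $B_R(k_n^{-1}\cdot\bQ_n)$ is the isometric image of a region determined by a compact piece of the rescaled encoding, and the restriction to it of its mass measure $\tfrac{8}{9k_n^4}\mu_{Q_n}$ is the image of $\tfrac{8}{9k_n^4}$ times the counting measure of the corresponding piece of the parameter set; this converges to the relevant Lebesgue restriction for $\mathbfcal{P}$ just as the metric does. Combined with the continuum step this yields $k_n^{-1}\cdot\bQ_n\convdist\mathbfcal{P}$ for the local GHP topology.

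The main obstacle is the measure-matching. In the continuum step it reduces to verifying that, on $\cF_\lambda$, the parameters projecting into a fixed metric ball are literally the same for $\lambda\cdot\bm_\infty$ and for $\mathbfcal{P}$ (using both the metric coincidence of \cite[Proposition~4]{CLG} and the fact that the parameter set defining a ball lies within the common ``agreement window'', so that no mass can escape to the boundary); in the discrete step it reduces to promoting the discrete-to-continuum limit of \cite{CLG} from GH to GHP uniformly over the mesoscopic windows of radius $\Theta(k_n)$, i.e.\ to showing that the rescaled counting measure, not merely the metric, is transported correctly through the Schaeffer encoding.
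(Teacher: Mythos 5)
Your proposal is correct in outline, and its continuum half is exactly the paper's argument: extend the identification $\rB_1(\lambda\cdot\bm_\infty)=\rB_1(\mathbfcal{P})$ of \citep[Proposition 4]{CLG} on the event $\cF_\lambda$ to the \emph{measured} balls, by observing that on $\cF_\lambda$ the canonical projections $\pi_\lambda\circ p_{(\lambda)}$ and $\pi_\infty\circ p_\infty$ agree on the parameter windows $[0,\gamma_\lambda(A))\cup(\eta_\lambda(A),T]$ containing everything that projects into the unit ball, so the pushforwards of Lebesgue measure restricted to the balls coincide. Where you diverge is the discrete step, and here the paper's route is cleaner than the one you sketch. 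Rather than re-running a Schaeffer-encoding analysis at the mesoscopic scale $\Theta(k_n)$, the paper invokes \citep[Proposition 9]{CLG} to couple $\rQ_n$ with a uniform quadrangulation $\rQ_{m_n}$ of size $m_n=\lceil\alpha_0^{-1}k_n\rceil^4$ so that $\rB_1(k_n^{-1}\cdot\bQ_n)=\rB_1(k_n^{-1}\cdot\bQ_{m_n})$ with probability at least $1-\veps$; since both spaces carry the (identically rescaled) counting measure, equality of the balls as maps immediately gives equality of the measured balls, and what remains is the \emph{fixed-scale} compact GHP convergence $k_n^{-1}\cdot\bQ_{m_n}\convdist\lambda\cdot\bm_\infty$ with $\lambda=\alpha_0^{-1}(8/9)^{1/4}$, which is the known measured Brownian-map limit \cite{Mi,LG,ABA}. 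In other words, the ``main obstacle'' you flag at the end --- promoting the discrete-to-continuum limit from GH to GHP uniformly over windows of radius $\Theta(k_n)$ --- dissolves in the paper's approach: the mesoscopic window is handled by an exact combinatorial coupling under which the counting measures are literally transported, and the only analytic limit taken is at a fixed scale. Your route is likely viable but would require substantially more work (essentially reproving the content of \citep[Proposition 9]{CLG} together with measure control), and as written that step remains a gap in your argument rather than a completed proof.
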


Recall that given a pointed metric measure space $\bV=(V,d,o,\nu)$, writing $B_r = B_r(\bV)$, $\rB_r(\bV) = \left(B_r,~d,~o\right)$, and $\bB_r(\bV) = \left(B_r,~d,~o,~\nu\big\vert_{B_r}\right)$, where $\nu\big\vert_{B_r}$ denotes the measure $\nu$ restricted to $B_r$. It suffices to show that, given $\rQ_n\in_u\cQ_n$, for any $r\ge0$,
\begin{equation}
\label{conv:ballghp}
\bB_r( k_n^{-1}\cdot \bQ_n) \convdist \bB_r(\mathbfcal{P})
\end{equation}
for the pointed GHP topology. We will show the convergence for $r=1$, for ease of notation, and the argument for $r\neq 1$ follows similarly. 

\begin{proof}[{\bf Proof of \refP{planeghp}}]
This proof is a slight extension of that for \citep[Theorem 1.2]{CLG}.

Fix $\veps>0$. It follows immediately from \citep[Proposition 3]{CLG} and the proof of \citep[Proposition 4]{CLG} that there exist $A>1$, $\alpha>0$, and $\lambda_0>(2\alpha)^{1/4}$ such that for all $\lambda\ge \lambda_0$, we can construct copies of $\be^\lambda$, $R$, $R'$, $Z^\lambda$, $Z$, $\lambda\cdot \bm_\infty$, and $\mathbfcal{P}$ on a common probability space in such a way that, with probability at least $1-\veps$, the event $\cF_\lambda = \cF_{A,\alpha,\lambda}(\be^\lambda,R,R',Z^\lambda,Z)$ holds. As shown in \citep[Proposition 4]{CLG}, on the event $\cF_\lambda$, it holds that $\rB_1(\lambda\cdot \bm_\infty) = \rB_1(\mathbfcal{P})$.

On the other hand, by \citep[Proposition 9]{CLG}, there exists $\alpha_0=\alpha_0(\veps)>0$ such that, for every sufficiently large integers $m$ and $n$ with $n>m$, we can construct $\rQ_n\in_u\cQ_n$ and $\rQ_m\in_u\cQ_m$ on a common probability space in such a way that the equality $\rB_{\alpha_0 m^{1/4}} (\bQ_n) = \rB_{\alpha_0 m^{1/4}}(\bQ_m)$ holds with probability at least $1-\veps$. 

Without loss of generality, we assume that $\alpha_0 < \frac{1}{2\lambda_0}$ and $k_n \le \alpha_0 \lfloor n^{1/4} \rfloor$ for all $n\in\N$. Write $\lambda = \alpha_0^{-1} \left(\frac{8}{9}\right)^{1/4}$, and note that $\lambda>\lambda_0$. For $n\in\N$, let $ m_n = \lceil \alpha_0^{-1} k_n\rceil^4$. Since $m_n$ tends to infinity with $n$, it follows that for large enough $n$, we may couple $\rQ_n$ and $\rQ_{m_n}$ such that the equality $\rB_1(k_n^{-1}\cdot \bQ_n) = \rB_1(k_n^{-1}\cdot \bQ_{m_n})$ holds with probability at least $1-\veps$. Since $\nu_{\rQ_n}$ and $\nu_{\rQ_{m_n}}$ both are counting measures, it follows from the previous equality that, with probability at least $1-\veps$, $\nu_{\rQ_n}\big\vert_{\rB_1(k_n^{-1}\cdot \bQ_n)} = \nu_{\rQ_{m_n}}\big\vert_{\rB_1(k_n^{-1}\cdot \bQ_{m_n})}$.

In the remainder of the proof, let $T = \lambda^4$. For every $x\in [0,\be^\lambda_{T/2}]$, set $\gamma_\lambda(x) = \sup\left\{t\le T/2: \be_t^\lambda=x\right\}$ and $\eta_\lambda(x) = \inf\left\{t\ge  T/2: \be_t^\lambda=x\right\}$. By \citep[Lemma 5]{CLG}, on the event $\cF_\lambda$, if $D^*_\lambda(\rho_\lambda,p_{(\lambda)}(t))\le 1$ then $t
\in [0,\gamma_\lambda(A))\cup(\eta_\lambda(A),T]$. From the proof of \citep[Proposition 4]{CLG}, we also know that $\gamma_\lambda(A)<\alpha$ and $T-\eta_\lambda(A)<\alpha$. Recalling the definition of $\cF_\lambda$ from \refS{sec:event}, it follows that on $\cF_\lambda$, we simultaneously have $\be^\lambda_t = R_t,~Z_t^\lambda = Z_t, ~\forall t\in[0,\gamma_\lambda(A)]$, and
$
 \be^\lambda_{t}=R_{T-t}', ~Z_t^\lambda = Z_{t-T},~\forall t\in[\eta_\lambda(A),T]$. This implies that on $\cF_\lambda$, $\pi_\lambda\circ p_{(\lambda)}\big\vert_{[0,\gamma_\lambda(A))\cup(\eta_\lambda(A),T]} = \pi_\infty \circ p_\infty\big\vert_{[0,\gamma_\lambda(A))\cup(\eta_\lambda(A)-T,0]}$, where $\pi_\lambda\circ p_{(\lambda)}$ is the canonical projection from $[0,T]$ to $\lambda\cdot \bm_\infty$, and $\pi_\infty \circ p_\infty $ is the canonical projection from $\R$ to $\mathbfcal{P}$. Since $B_1(\lambda\cdot \bm_\infty)\subset \pi_\lambda\circ p_{(\lambda)}\left([0,\gamma_\lambda(A))\cup(\eta_\lambda(A),T]\right)$ and $B_1(\mathbfcal{P})\subset \pi_\infty \circ p_\infty\left([0,\gamma_\lambda(A))\cup(\eta_\lambda(A)-T,0]\right)$, we obtain that, on $\cF_\lambda$, the measured versions of $\rB_1(\lambda\cdot \bm_\infty)$ and $\rB_1(\mathbfcal{P})$ are also equal: $\bB_1(\lambda\cdot \bm_\infty) = \bB_1(\mathbfcal{P})$.

Next, since $m_n = \lceil \lambda k_n (9/8)^{1/4}\rceil^4$ for all $n\in\N$, by \citep[Theorem 1]{Mi} and \citep[Theorem 1.1]{LG}, $k_n^{-1} \cdot \bQ_{m_n} \convdist \lambda \cdot \bm_\infty$ for the pointed GHP topology. (In \cite{CLG,Mi}, the convergence is only stated for the GH topology, but the proof in fact yields the above formulation. This is also stated explicitly in \citep[Theorem 4.1]{ABA}.) So, for the pointed GHP topology,
\begin{equation}\label{conv:ghpstar}
\bB_1(k_n^{-1}\cdot \bQ_{m_n})\convdist \bB_1(\lambda\cdot  \bm_\infty).
\end{equation}

Finally, it follows that we may simultaneously couple $\rQ_n$, $\rQ_{m_n}$, $\lambda\cdot \bm_\infty$, and $\mathbfcal{P}$ so that with probability at least $1-2\veps$ we have both $\bB_1(k_n^{-1}\cdot \bQ_n) = \bB_1(k_n^{-1}\cdot \bQ_{m_n})$ and $\bB_1(\lambda\cdot  \bm_\infty) = \bB_1(\mathbfcal{P})$. Write $\K^\star$ for the set of isometry-equivalence classes of pointed compact metric measure spaces. In a space where such a coupling holds, for any bounded continuous function $F:\K^\star \to \R$, we have $
\left\vert\E{ F(\bB_1(k_n^{-1}\cdot \bQ_n)) - F(\bB_1(\mathbfcal{P})) }\right\vert 
\le
\left\vert\E{  F(\bB_1(k_n^{-1}\cdot \bQ_n)) - F(\bB_1(k_n^{-1}\cdot \bQ_{m_n})) }\right\vert + \left\vert\E{  F(\bB_1(k_n^{-1}\cdot \bQ_{m_n})) 
- F(\bB_1(\lambda\cdot  \bm_\infty))}\right\vert
 +\left\vert\E{ F(\bB_1(\lambda \cdot \bm_\infty)) 
- F(\bB_1(\mathbfcal{P}))}\right\vert$. Writing $\|F\| = \sup_{x\in \K^\star} F(x)$, the first and the third terms on the right of the inequality are each less than $2\veps \|F\|$. The second term tends to $0$ with $n$, by (\ref{conv:ghpstar}). Therefore, $
\limsup_{n\to\infty} \left\vert \E{F(\bB_1(k_n^{-1}\cdot \bQ_n))}
- \E{F(\bB_1(\mathbfcal{P}))}\right\vert < 4\veps \|F\|$. Since $\veps$ was arbitrary, it follows that $ \E{F(\bB_1(k_n^{-1}\cdot \bQ_n))} \to \E{F(\bB_1(\mathbfcal{P}))}$, so $\bB_1(k_n^{-1}\cdot \bQ_n)\convdist \bB_1(\mathbfcal{P})$ for the pointed GHP topology by the Portmanteau theorem. As noted above, the case $r\neq1$ of (\ref{conv:ballghp}) follows by a similar argument.
\end{proof}

\small 

\normalsize

\end{document}